\theoremstyle{thmstyleone}%
\newtheorem{theorem}{Theorem}%
\newtheorem{proposition}[theorem]{Proposition}%
\newtheorem{remark}{Remark}%
\newtheorem{defn}{Definition}%
\newtheorem{thm}{Theorem}
\newtheorem{lemma}{Lemma}
\newtheorem{lem}[theorem]{Lemma}
\numberwithin{theorem}{section} 
\newcommand{\R}{\mathbb{R}}
\newcommand{\cred}{\color{red}}
\newcommand{\ep}{\varepsilon}
\newcommand{\bn}{{\bf n}}
\newcommand{\bg}{{\bf g}}
\newcommand{\bu}{{\bf u}}
\newcommand{\bq}{{\bf q}}
\newcommand{\bw}{{\bf w}}
\newcommand{\bv}{{\bf v}}
\newcommand{\bD}{{\bf D}}
\newcommand{\bW}{{\bf W}}
\newcommand{\bH}{{\bf H}}
\newcommand{\bL}{{\bf L}}
\newcommand{\bQ}{{\bf Q}}
\newcommand{\bA}{{\bf A}}
\newcommand{\bC}{{\bf C}}
\newcommand{\sO}{\mathcal O}
\newcommand{\sW}{\mathscr{W}}
\newcommand{\sV}{\mathscr{V}}
\newcommand{\sD}{\mathscr{D}}
\newcommand{\sB}{\mathcal{B}}
\newcommand{\sL}{\mathcal{L}}
\newcommand{\bfeta}{{\boldsymbol{\eta}}}
\newcommand{\bfpsi}{\boldsymbol{\psi}}
\newcommand{\bfphi}{\boldsymbol{\phi}}
\newcommand{\bfvarphi}{\boldsymbol{\varphi}}
\newcommand{\bfxi}{\boldsymbol{\xi}}
\newcommand{\cblue}{\color{black}}
	\title[3D FSI with vector structure displacements]{Existence and Regularity Results for a Nonlinear Fluid-Structure Interaction Problem with Three-Dimensional Structural Displacement}
\author[ S. \v{C}ani\'{c}, B. Muha, K. Tawri ]{Sun\v{c}ica \v{C}ani\'{c}$^1$ and Boris Muha$^{2}$ and Krutika Tawri$^{1}$ }
\address{\newline	$^1$ Department of Mathematics, University of California Berkeley, CA, USA.
\newline $^2$ Department of Mathematics, University of Zagreb, Croatia.}
\begin{document}
	
	\begin{abstract}
In this paper we investigate a nonlinear fluid-structure interaction (FSI) problem involving the Navier-Stokes equations, which describe the flow of an incompressible, viscous fluid in a 3D domain interacting with a thin viscoelastic lateral wall. The wall's elastodynamics is modeled by a two-dimensional plate equation with fractional damping, accounting for displacement in all three directions. The system is nonlinearly coupled through kinematic and dynamic conditions imposed at the time-varying fluid-structure interface, whose location is not known a priori.

We establish three key results, particularly significant for FSI problems that account for vector displacements of thin structures. Specifically, we first establish a hidden spatial regularity for the structure displacement, which forms the basis for proving that self-contact of the structure will not occur within a finite time interval. Secondly, we demonstrate temporal regularity for both the structure and fluid velocities, which enables a new compactness result for three-dimensional structural displacements. Finally, building on these regularity results, we prove the existence of a local-in-time weak solution to the FSI problem. This is done through a constructive proof using time discretization via the Lie operator splitting method.

These results are significant because they address the well-known issues associated with the analysis of nonlinearly coupled FSI problems capturing vector displacements of elastic/viscoelastic structures in 3D, such as spatial and temporal regularity of weak solutions and their well-posedness. 
	\end{abstract}

	\maketitle
	
\section{Introduction}
We study a fluid-structure interaction (FSI) problem involving an incompressible, viscous fluid flowing within a three-dimensional domain, bounded by thin compliant lateral walls. The fluid dynamics is governed by the three-dimensional Navier-Stokes equations, while the structural dynamics is modeled by a linear viscoelastic plate equation incorporating fractional damping.

The interaction between the fluid and the structure is characterized by a fully coupled system, with kinematic and dynamic coupling conditions that enforce the continuity of velocities and contact forces at the dynamic fluid-structure interface. This coupling introduces a significant geometric nonlinearity to the problem, as the fluid domain's location is not known {\sl{a priori}} and is instead one of the unknowns in the problem.

The field of fluid-structure interaction (FSI) analysis has seen tremendous progress over the past two decades (see, e.g., \cite{bodnar2014fluid,kaltenbacher2018mathematical,Canic21} and references therein). In this paper, we focus on the interaction between fluid flow and a plate structure, so our brief literature review emphasizes the analysis of moving boundary FSI problems where the structural dynamics is described by lower-dimensional models.

Most existing works involving lower-dimensional models interacting with viscous incompressible fluids consider the case of scalar displacement, where the structure deforms only in a fixed direction typically normal to the reference configuration. The theory of weak solutions in this context is well developed, see \cite{CDEG, G08, MC13,Srdjan20,SebastianWSU,FSIHeat} and references therein. Strong solutions have also been studied in this context, as can be found in e.g. \cite{KT12,GH16,GrandmOntHillairetLeq19,MaityTakhashi21} and refences within.

The case of three-dimensional ($3D$) structural displacement, where the structure can deform in all three spatial directions (vector displacement), is less well-studied, with only a few works addressing weak solutions. In \cite{MCG20}, the authors investigated an FSI problem involving a $3D$ fluid flow interacting with a two-dimensional ($2D$) cylindrical shell supported by a mesh of elastic rods. They proved the existence of a weak solution under additional assumptions that ensured the structure's displacement remained Lipschitz continuous in space at all times. In the $2D$ fluid and $1D$ structure scenario, several results have been obtained. The local-in-time existence of weak solutions to FSI problems where $2D$ Navier-Stokes equations are coupled with $1D$ plate or shell equations via the Navier slip boundary condition is established in \cite{SunBorSlip}. More recently, \cite{KSS23} considered an FSI problem where the structure is described by a nonlinear beam equation with a term that penalizes compression, preventing domain degeneracy. Additionally, recent work \cite{grandmont2024existence} has established the existence of local-in-time strong solutions for an FSI problem where the structure is modeled as a linear plate.

{\cblue {To the best of our knowledge, the present work is the first to establish the existence of weak (finite energy) solutions for a moving boundary FSI problem where a $3D$ fluid is coupled with a $2D$ plate with $3D$ vector displacement. }}

The primary challenge in developing a theory for FSI problems involving structure equations accounting for 3D vector displacements is managing the difficulties associated with self-contact. Specifically, proving existence results requires ruling out fluid domain degeneracy, i.e., preventing self-contact of the structure over the time interval where the solution is defined. {\cblue{In particular, in the case of $3D$ displacement}}, the standard energy estimates do not provide sufficient regularity {\cblue{of}} the structure {\cblue{to analyze issues with self-contact}}. 

Another challenge in developing a theory for FSI problems with vector displacements and with the geometrically nonlinear coupling 
 is designing suitable compactness arguments for the fluid and structure velocities whose energy-based regularity estimates are insufficient to deduce compactness.

{\cblue{In this manuscript we address both of those challenges by proving two ``hidden'' regularity results for weak solutions of such problems. The first regularity result improves the spatial regularity
of structure displacement over the ``basic'' regularity provided by the energy estimate, and the second regularity result improves the temporal regularity of fluid and structure velocities over that provided by the energy estimates. The first is used in ensuring non-degeneracy of the fluid domain, while the second is used in establishing compactness arguments for the fluid and structure velocities in this class of nonlinear moving boundary problems.
Finally, building on these regularity results we prove the existence of a weak solution to a FSI involving 3D Navier-Stokes equations coupled to the 2D plate equation with fractional damping accounting for 3D vector displacements. 
Thus, the main results of this paper are three-pronged: 
(1) We provide a hidden regularity result for 2D plates with fractional damping allowing 3D vectoral displacements,
(2) We provide a hidden temporal regularity result for fluid and structure velocities in  a nonlinearly coupled 3D fluid-2D plate FSI problem with fractional damping and 3D vector displacements, and 
(3) We prove a well-posedness result for weak solutions of the 
nonlinearly coupled 3D fluid-2D plate FSI problem with fractional damping and 3D vector displacements.

More precisely,  in terms of spatial regularity of structure displacement, in Section~\ref{sec:spatial_reg} we prove that the structure displacement belongs to the space $L_t^\infty H^{2+\delta}_x$ for a sufficiently small $\delta>0$, {\cblue{which is}} crucial for establishing that the structure displacement is Lipschitz continuous in space at any given time, ensuring injectivity of the maps that map the reference configuration of the fluid domain onto the ``current'' location of the moving domain. 
This {\cblue{is generally one of the}} key {\cblue{issues}} in {\cblue{the analysis of nonlinearly-coupled moving boundary problems with $3D$ (vector) structure displacements. }}
{\cblue{The main ideas behind the proof of this hidden spatial regularity result rely on constructing appropriate test functions for the structure variable and their solenoidal extensions to the fluid domain, which satisfy the kinematic coupling condition. A key step is to formulate a suitable non-homogeneous time-dependent Stokes problem whose solution is used to construct the desirable test functions. 
This approach generalizes the approach presented in \cite{MS22} to vector displacements.
The technique developed here can be applied to other settings, including nonlinear structure operators that are coercive in $H^2$, and different boundary conditions, including the time-dependent inlet/outlet boundary data. 
}}

In terms of temporal hidden regularity result for the fluid and structure velocities, in Section~\ref{subsec:temp} we prove that 
the fractional time derivative of order $1/8$ of the fluid and structure velocities can be uniformly bounded in $L_t^2 L_x^2$,
i.e., we obtain uniform bounds for the fluid and structure velocities in $N_t^{\alpha,2} L_x^2$, where $N^{\alpha,p}$ is Nikolski space.
The key idea is to construct appropriate test functions for the coupled FSI problem by utilizing a time-regularized (averaged) modification of the structure and fluid velocities, similar to the approaches used in \cite{CDEG, G08}.
The construction of these time-regularized (averaged) test functions presents several challenges, arising from the motion of the fluid domain, the non-zero longitudinal displacement of the structure, and the mismatch in spatial regularity between the structure velocity and its corresponding test function. Additionally, the test functions for the fluid and structure must satisfy the kinematic coupling condition at the moving boundary, with the fluid test function also needing to satisfy the divergence-free condition within the moving fluid domain. To enforce these conditions, we construct a Bogovskii-type operator on a time-varying domain with a Lipschitz boundary. The construction of the Bogovskii-type operator presented here
holds significant potential for applications to analyzing general incompressible flow problems on moving domains involving Lipschitz boundaries.

Finally, in Section \ref{sec:exist}, we present a constructive proof of the existence of a local-in-time weak solution to a FSI problem between the 3D flow of an incompressible, viscous fluid modeled by the Navier-Stokes equations and a 2D plate with fractional damping modeling elastodynamics of a plate with 3D vector displacements. We employ a Lie operator splitting method, first utilized in the context of FSI in \cite{MC13} (and further developed in \cite{MC14, MCG20} for different FSI settings). The coupled problem is discretized in time and split into a structure subproblem and a fluid subproblem along the dynamic coupling condition. This time discretization via Lie operator splitting yields a sequence of approximate solutions, which is shown to converge, up to a subsequence and in an appropriate sense, to the desired solution.

The structural regularity result from Section \ref{sec:spatial_reg} is crucial in the construction of approximate solutions and the limiting solution, allowing us to obtain the desired solution up to a strictly positive time $T$ 
determined by self-intersection of the fluid domain boundary. The temporal regularity result obtained in Section \ref{subsec:temp} is crucial to achieve the compactness of the sequence of approximations of the fluid and structure velocities. This ensures that a subsequence of the approximate solutions converges strongly in the relevant topologies as the time step approaches zero, which allowed us to pass to the limit in the approximate weak formulations to prove that the limits satisfy the continuous weak formulation of the original problem. 

In this final step of taking the limit in the approximate weak formations, one needs to deal with one last difficulty associated with general problems on moving domains -- the fact that the test functions in weak formulations depend on the fluid domain motion, and thus on the time-discretization step, via structure displacements,  in a nontrivial way (through the divergence-free condition). This is a classical problem in FSI problems with nonlinear coupling, see e.g., \cite{MC13,MC14,MC16,SunBorSlip}. Taking the limit in approximate weak formulations requires constructing appropriate test functions which would converge, as the time-discretization step converges to zero, to the test functions of the continuous problem in the norm strong enough to pass to the limit. 
Indeed, in Section~\ref{sec:test_functions} we construct such test functions and take the limit in approximate weak formulations to show that the approximate solutions constructed here converge to a weak solution of the continuous problem.


}}

\section{Problem setup}
We  consider the flow a fluid in a periodic channel interacting with a complaint structure that sits atop the fluid domain. See Figure~\ref{fig}.
We assume that the structure displacement is periodic, with the reference domain for the structure equations given by
\begin{equation*}
	\Gamma = \{(x, y, z) \in \mathbb{R}^{3}: (x, y) \in \mathbb{T}^2, z = 1\},
\end{equation*}
where $\mathbb{T}^2$ is the 2D torus.

The fluid reference domain, is then given by 
$$\sO = \Gamma\times (0,1).$$
We denote by $\Gamma_r=\partial\sO\setminus\Gamma$ the rigid part of the boundary of the fluid reference domain $\sO$.
\begin{figure}[h]\centering	\includegraphics[scale=0.5]{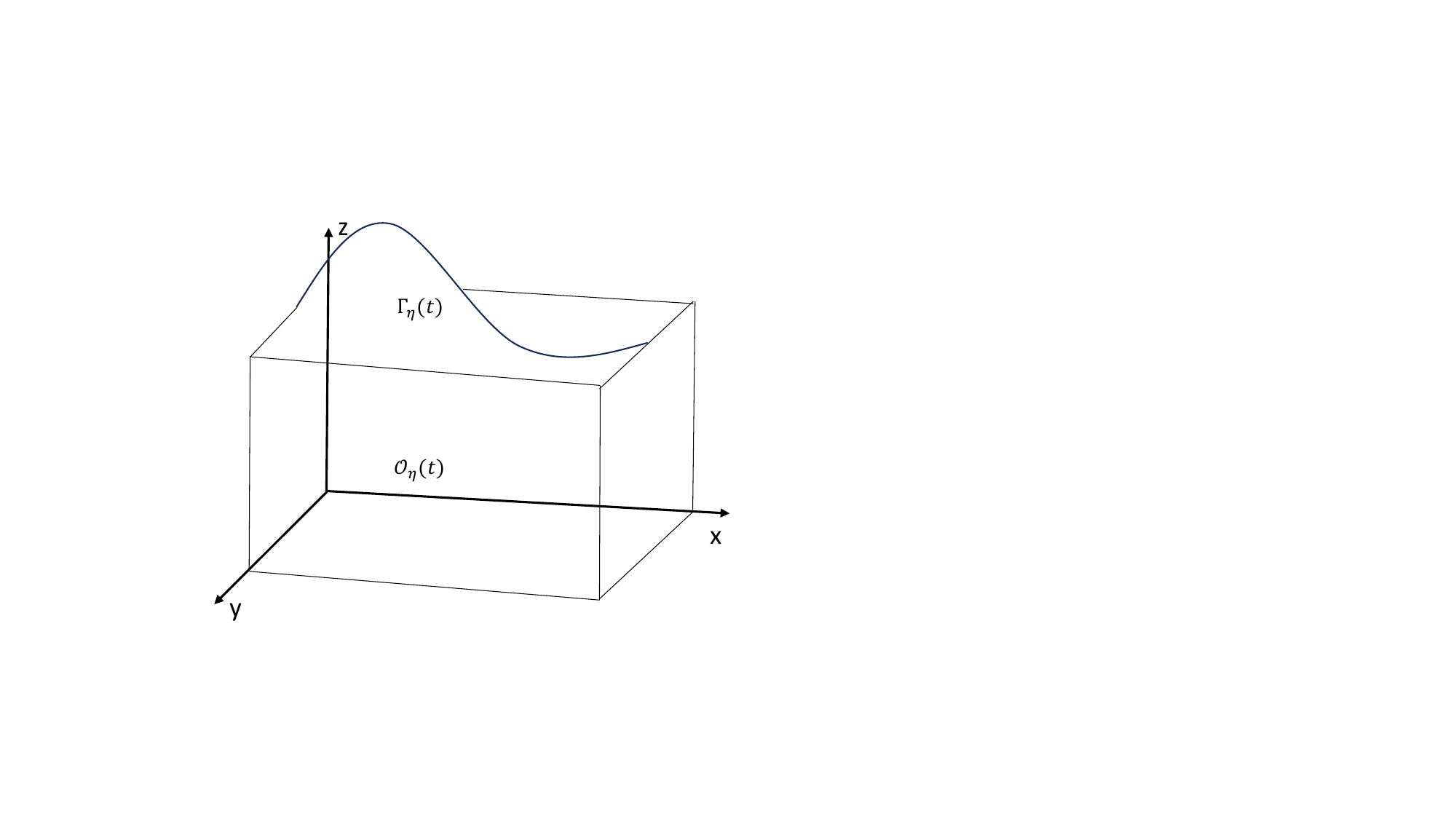}	\caption{The fluid domain}\label{fig}\end{figure}

In this work we assume that the displacement of the compliant structure, denoted by $\bfeta$,  is a vector function with all three components of displacement satisfying a vector equation for a plate with fractional damping, thereby allowing all three components of displacement to be different from zero.

The fluid domain deforms as a result of the interaction between the fluid and the structure. The time-dependent fluid domain in 3D, whose displacement is not known a priori is then given by	
$$\sO_{{\bfeta}}(t)={\bA_\bfeta}(t,\sO),$$ 
whereas its deformable interface is given by 
$$\Gamma_{\bfeta}(t)={\bA_\bfeta}(t,\Gamma),$$
 where $\bA_\bfeta$ is a family of $C^1$ diffeomorphisms  parametrized by time $t\in[0,T]$, such that
\begin{equation}\label{diffeo}
\bA_\bfeta(t)={\bf id}+\bfeta(t)\text{ on } \Gamma,\quad {\bA_\bfeta}(t)|_{\Gamma}={\bf id},\quad \text{det}\nabla{\bA_\bfeta}( t,x,y,z)>0.
\end{equation}

We will now describe the fluid and the structure equations and the two-way coupling that describe the interactions that take place between them.

\vskip 0.1in
\noindent{\bf The fluid subproblem:}		
The fluid flow is modeled by the incompressible Navier-Stokes equations in the 3D time-dependent domains $\sO_{\bfeta}(t) \subset \mathbb{R}^3$ :
\begin{equation}
\left.
	\begin{split}\label{u}
		\partial_t\bu + (\bu\cdot \nabla)\bu &= \nabla \cdot \sigma\\
		\nabla \cdot \bu&=0
	\end{split}
	\right\} \quad {\rm in} \ \sO_{\bfeta}(t) \times (0,T),
\end{equation}
where $\bu=(u_x,u_y,u_z)$ is the fluid velocity. The Cauchy stress tensor is given by $\sigma=-p I+2\nu \bD(\bu)$ where $p$ is the fluid pressure, $\nu>0$ is the kinematic viscosity coefficient and $\bD(\bu)=\frac12(\nabla\bu+(\nabla\bu)^T)$ is the symmetrized gradient of fluid velocity. 
Finally, on the rigid part of the boundary we prescribe  the no-slip boundary conditions:
\begin{align*}
	\bu = 0 \quad\text{ on } \Gamma_r.
\end{align*}
{\bf The structure subproblem:} The elastodynamics problem is given
by the linearly visco-elastic plate equations describing the displacement of the structure in three spatial directions. 
The plate is displaced from its reference domain $\Gamma$ by $\bfeta=(\eta_x,\eta_y,\eta_z)$, which satisfies the following equation for some $0<s\le 1$, 
see e.g., \cite{ChenTriggiani90,tebou2021regularity}:
\begin{align}\label{eta}
	\partial^2_{t}\bfeta +\Delta^2\bfeta +\gamma \Lambda^{2+2s}\partial_t\bfeta = F_\bfeta \quad \text{ in } \Gamma,
	\quad {\rm where} \ \Lambda=(-\Delta)^{\frac{1}{2}}.
\end{align}

Here, $F_{\bfeta}$ denotes the total force experienced by the structure. Assuming that the external forcing on the structure is 0, this force $F_\bfeta$ in the coupled problem results from the jump in the	normal stress (traction) across the structure. With the assumption that the external force is zero,
$F_\bfeta$ comes entirelly from the fluid load felt by the structure (see \eqref{dynbc}).

 Since we work on the torus, the square root of negative Laplatian, denoted here by $\Lambda$, along with its powers can be defined via Fourier transform. 
 
 The damped plate model given by equation \eqref{eta} has been extensively studied in the literature, see e.g.,  \cite{ChenTriggiani90,tebou2021regularity}.
\begin{remark}
	In the classical work \cite{ChenTriggiani90}, $s=0$ is identified as a critical parameter for which the semigroup, {\cblue{defined by the spatial differential operator}}, becomes analytic. In our work, we use the dissipation term to derive a priori estimates, which necessitates that $s>0$.
\end{remark}

We stress here that while the fluid equations are posed on time-dependent domains, in Eulerian framework, the structure equations are defined in Lagrangian coordinates on the fixed reference domain $\Gamma$.

\vskip 0.1in
{\bf The non-linear fluid-structure coupling}: The coupling between the structure and the fluid takes place across the "current" location of the fluid-structure interface. We consider a two-way coupling described by the so-called kinematic and dynamic coupling conditions that describe continuity of velocity and continuity of normal stress at the fluid-structure interface, respectively.
\begin{itemize}
	\item The kinematic coupling condition which describes the continuity of velocities at the interface is the no-slip boundary condition, which in the case of moving boundary, reads:
	\begin{align}\label{kinbc}
		\partial_t \bfeta(t)=(\bu\circ \bA_\bfeta(t))|_\Gamma.
	\end{align}
	\item
	The dynamic coupling condition specifies the load $F_\bfeta$ experienced by the structure:
	\begin{align}\label{dynbc}
		F_{\bfeta}=-S_{\bfeta}(t) \left( (\sigma {\bf n}^{\bfeta}) \circ {\bA_\bfeta}(t)\right)|_\Gamma,
	\end{align}
	where $\bn^{\bfeta}$ is the unit outward normal to the boundary of $\sO_\bfeta$, and {$S_\bfeta=|{\rm cof}\nabla A_{\bfeta}{\bf e}_3|$ defines surface measure of $\Gamma_{{\bfeta}}$, i.e. $d\Gamma_{\bfeta}=S_{\bfeta}d\Gamma$. This term arises from the transformation between Eulerian and Lagrangian coordinates.}
	
\end{itemize}
\subsection{Energy of the coupled problem}\label{energy} In this section we formally derive the following energy inequality corresponding to the coupled fluid-structure interaction problem:
\begin{equation}\label{apriori}
\boxed{	\|\bu\|_{L^\infty(0,T;\bL^2(\sO_\bfeta(\cdot)))} + \|\bD\bu\|_{L^2(0,T;\bL^2(\sO_\bfeta(\cdot)))} + \|\bfeta\|_{L^\infty(0,T;\bH^2(\Gamma))} +\|\partial_t\bfeta\|_{L^2(0,T;\bH^{1+s}(\Gamma))} \leq C}
\end{equation}
\vskip 0.05in
where $C>0$ depends only on the given data $\bu_0,\bv_0,\bfeta_0$.

To show the derivation of this energy inequality, we begin by multiplying the fluid equations \eqref{u} with $\bu$ and then integrate over the moving domain $\sO_\bfeta$. For any $t\in[0,T]$ we obtain,
\begin{align}\label{test}
	\int_{\sO_\bfeta(t)}(\partial_t\bu\cdot\bu + (\bu\cdot\nabla)\bu\cdot\bu) = 	\int_{\sO_\bfeta(t)}(\nabla\cdot\sigma)\cdot\bu.
\end{align}
Thanks to Reynold's transport theorem, the first half of the left-hand side of \eqref{test} can be written as
\begin{align*}
	\int_{\sO_\bfeta(t)}\partial_t\bu\cdot\bu = 	\frac{d}{dt}\int_{\sO_\bfeta(t)}\frac12|\bu|^2-\int_{\Gamma_\bfeta(t)}\frac12|\bu|^2\bu\cdot\bn^\bfeta
\end{align*}
Whereas, the advection term can be treated as follows
\begin{align*}
	\int_{\sO_\bfeta(t)}(\bu\cdot\nabla)\bu\cdot\bu& = \frac12\int_{\Gamma_{{\bfeta}}}|\bu|^2\bu\cdot \bn^{\bfeta}.
\end{align*}
For the term on the right-hand side of \eqref{test} we obtain
\begin{align*}
	\int_{\sO_\bfeta(t)}(\nabla\cdot\sigma)\cdot\bu &= \int_{\partial\sO_\bfeta(t)}\sigma\bn^\bfeta\cdot\bu- \int_{\sO_\bfeta(t)}|\bD\bu|^2.
\end{align*}
Now, by applying the kinematic and dynamic coupling conditions at the fluid-structure interface and by using \eqref{eta} and the fact that $\bfeta$ is periodic in $x$ and $y$, we obtain that
\begin{align*}
	\int_{\Gamma_\bfeta(t)}\sigma\bn^\bfeta\cdot\bu &=\int_\Gamma S_{\bfeta}(t) \left( (\sigma {\bf n}^{\bfeta}) \circ {\bA_\bfeta}(t)|_\Gamma\right) (\bu\circ {\bA_\bfeta}(t))|_\Gamma =\int_\Gamma F_{\bfeta}\partial_t\bfeta\\
	& = \int_\Gamma 	(\partial^2_{t}\bfeta +\Delta^2\bfeta -\gamma\Lambda^{2+2s}\partial_t\bfeta)\cdot\partial_t\bfeta\\
	&= \frac{d}{dt}\int_\Gamma(|\partial_t\bfeta|^2 + |\Delta\bfeta|^2) +\gamma\int_\Gamma|\Lambda^{{1+s}}\partial_t\bfeta|^2.
\end{align*}
Hence, gathering all the equations above we obtain:
\begin{equation}
	\begin{split}
		\frac{d}{dt}\int_{\sO_\bfeta(t)}\frac12|\bu|^2 +\int_{\sO_\bfeta(t)}|\bD\bu|^2+	\frac{d}{dt}\int_\Gamma(|\partial_t\bfeta|^2 &+ |\Delta\bfeta|^2) +\gamma\int_\Gamma|\Lambda^{{1+s}}\partial_t\bfeta|^2.
	\end{split}
\end{equation}
Integration with respect to time implies the energy inequality \eqref{apriori}.

\subsection{Weak formulation on moving domains}
Before we derive the weak formulation of the deterministic system described in the previous subsection, we define the following function spaces
{for the fluid velocity, the structure, and the coupled FSI problem}:
\begin{align*}
	&\tilde\sV_F(t)= \{{\bf u}\in \bH^{1}(\sO_{\bfeta}(t)) :
	\nabla \cdot {\bf u}=0, \bu=0  \text{ on } \Gamma_r	\},\nonumber\\
	&\tilde\sW_F(0,T)=L^\infty(0,T;\bL^2(\sO_\bfeta(\cdot))) \cap L^2(0,T;\tilde\sV_F(\cdot)),\\
	&\sV_S=\bH^2(\Gamma)\\
	&\sW_S(0,T)=W^{1,\infty}(0,T;\bL^2(\Gamma)) \cap L^\infty(0,T;\sV_S) \cap H^1(0,T;\bH^{1+{s}}(\Gamma));\quad 0<s \le 1,\\
	&\tilde\sW(0,T)=\{(\bu,\bfeta)\in \tilde\sW_F(0,T)\times\sW_S(0,T):	\partial_t \bfeta(t)=\bu\circ \bA_\bfeta(t) \text{ on }\Gamma\}.
\end{align*}
Here bold-faced lettered spaces are used for vector valued functions. We will take test functions $(\bq,\bfpsi)$ from the following space:
\begin{equation*}
	\tilde	\sD(0,T)=\{({\bf q},\bfpsi) \in C^1([0,T);\tilde\sV_F(\cdot) \times \sV_S):  \bfpsi(t)=\bq\circ \bA_\bfeta(t), \text{ on }\Gamma\}.
\end{equation*}
Now, we can introduce the weak formulation of our problem on moving domain.
\begin{defn}\label{weakform_moving}
	We say that $(\bu,\bfeta) \in \tilde\sW(0,T)$,  is a weak solution to \eqref{u}-\eqref{dynbc} if for any test function $\bQ=(\bq,\bfpsi) \in \tilde\sD(0,T)$ the following equality holds:
	\begin{equation}
		\begin{split}\label{origweakform}
			&-\int_0^{T }\int_{\sO_{{\bfeta}(t)}}\bu\cdot\partial_t\bq 
			-\int_0^{T}\int_\Gamma\partial_t{\bfeta}\partial_t\bfpsi 
			-\int_0^{T}\int_{\Gamma_{\bfeta}(t)}(\bu\cdot\bq)(\bu\cdot\bn^{\bfeta}) \\
			&+\int_0^T\int_{\sO_{{\bfeta}}(t)}\left( 
			(\bu\cdot\nabla){\bf u}\cdot{\bq}\right)		+ 2\nu\int_0^{T} \int_{\sO_{{\bfeta}(t)}} \bD(\bu)\cdot \bD(\bq) \\
			&	 +\int_0^{T}\int_\Gamma \Delta\bfeta\cdot\Delta \bfpsi  +\gamma \int_0^{T}\int_\Gamma{ \Lambda^{{1+s}}\partial_t\bfeta:\Lambda^{{1+s}} \bfpsi}=\int_{\sO_{\bfeta_0}}\bu_0\bq(0)   + \int_\Gamma \bv_0\bfpsi(0). 
	\end{split}\end{equation}
\end{defn}
\begin{remark}
Notice that under the assumption that there exists of a family of $C^1$ diffeomorphisms $\bA_\bfeta$, defined in \eqref{diffeo}, this weak formulation is well-defined.
Along with the weak solutions, we will construct the corresponding maps $\bA_\bfeta$ satisfying these assumptions. 
\end{remark}
We recall that one of our main results in this manuscript is the existence of a solution to the FSI problem \eqref{u}-\eqref{dynbc} in the sense of Definition \ref{weakform_moving}.

\subsection{Arbitrary Lagrangian-Eulerian (ALE) formulation on fixed domain}\label{subsec:ALE}
To deal with the geometric non-linearities resulting from the motion of the fluid domain we transform the fluid equations onto the fixed reference domain $\sO=\Gamma\times (0,1)$ and give a weak formulation equivalent to \eqref{origweakform} posed on this fixed domain. For that purpose, we consider 
a family of Arbitrary Lagrangian-Eulerian (ALE) mappings that are ubiquitous in the field of computational fluid-structure interaction. The ALE maps, denoted by $\bA_\bfeta$, constitute a family, parametrized by time $t\in[0,T]$, of diffeomorphisms from the fixed domain $\sO$ onto the moving domain $\sO_{{\bfeta}}(t)$. With the aid of these maps, we will find a relevant weak formulation on $\sO$ satisfied by $\bu\circ \bA_\bfeta$. 

 In this article, these maps will be obtained by considering harmonic extensions of the structure displacement $\bfeta$ in $\sO$. That is, the ALE maps solve the following equations:
\begin{equation}\label{ale}
	\begin{split}
		\Delta \bA_\bfeta&=0, \quad \text{ in } \sO,\\
		\bA_\bfeta=\textbf{id}+{\bfeta} \text{ on } \Gamma,& \quad \text{ and }\quad
		\bA_\bfeta=\textbf{id}  \text{ on } \partial\sO\setminus\Gamma.
	\end{split}
\end{equation}
The existence and uniqueness of the solution to \eqref{ale} is classical. However, we need to prove that these maps are well-defined, i.e. $\bA_\bfeta(t): \sO\to\sO_\bfeta(t)$ is indeed a $C^1$-diffeomorphism for every $t\in[0,T]$ (see Remark \ref{equiv}).

Before moving on to analyzing the properties of $\bA_\bfeta$, we summarize the notation that will be used to simplify the ALE formulation of the problem. First, we denote the Jacobian of the ALE maps by
\begin{equation}\label{ALE_Jacobian}
	J_{\bfeta}=	\text{det }\nabla \bA_{\eta}.
\end{equation}
Next, under the transformation given in \eqref{ale}, the transformed gradient and the transformed symmetrized gradient of any function $\bg^\bfeta:=\bg\circ \bA_\bfeta$ for $\bg\in \bH^1(\sO_\bfeta)$ are given by
$$\nabla^{\bfeta} \bg^{\bfeta}=\nabla \bg\circ \bA_\bfeta=\nabla\bg^{\bfeta}(\nabla\bA_\bfeta)^{-1} \quad \text{ and }\quad \bD^\eta(\bu)=\frac12(\nabla^\eta\bu+(\nabla^\eta)^T\bu). $$
Similarly, the transformed divergence will be denoted by
$$ \text{div}^{\bfeta}\bu=\nabla^{\bfeta}\cdot\bu=tr(\nabla^\bfeta\bu).$$
Finally, we use $\bw^\eta$ to denote the ALE velocity:
$$\bw^\eta=\partial_t\bA_\bfeta .$$
Using this notation we will give the definition of function spaces used to describe the fixed domain ALE formulation of our FSI problem.
We define,
\begin{align*}
	&\sV^{\bfeta}_F(t)= \{{\bf u}\in \bH^{1}(\sO) :
	\nabla^\bfeta \cdot {\bf u}=0, \bu=0  \text{ on } \Gamma_r	\},\nonumber\\
	&\sW_F(0,T)=L^\infty(0,T;\bL^2(\sO)) \cap L^2(0,T;\sV^{\bfeta}_F(\cdot)),\\
	&\sW(0,T)=\{(\bu,\bfeta)\in \sW_F(0,T)\times\sW_S(0,T):	\partial_t \bfeta(t)=\bu|_{\Gamma}\}.
\end{align*} 

The space of test functions is as follows:
\begin{equation*}
	\sD^\bfeta(0,T)=\{({\bq},\bfpsi) \in C^1([0,T);\sV^{\bfeta}_F(\cdot)\cap H^3(\sO) \times \sV_S) : {\bf q}|_{\Gamma}=\bfpsi\}.
\end{equation*}
Now we will now present a weak formulation on the fixed domain $\sO$, derivation of which is the same as given in Section 4.3 \cite{MC13}.
\begin{defn}\label{weakform_steady}
	We say that $(\bu,\bfeta)\in \sW(0,T)$ is a  weak solution of the nonlinearly coupled FSI problem \eqref{u}-\eqref{dynbc} defined in terms of a {\bf{fixed domain formulation}}
	on $\sO$ if the following equation holds for any $(\bq,\bfpsi)\in \sD^\bfeta(0,T)$:
	\begin{equation}
		\begin{split}\label{weaksol}
			&-\int_0^T\int_\sO{ J}_{\bfeta}\bu\cdot \partial_t{\bq} -\int_0^T\int_\Gamma \partial_t {\bfeta} \,\partial_t{\bfpsi}= \int_0^T\int_{\sO}\partial_tJ_\bfeta 
			\bu\cdot\bq \\
			& -\int_0^T\int_{\sO}J_{\bfeta}(\bu\cdot\nabla^{\bfeta }\bu \cdot\bq
			- \bw^{\bfeta}\cdot\nabla^{\bfeta }\bu\cdot\bq )
			-2\nu\int_0^T\int_{\sO}{J}_{\bfeta}\, \bD^{{\bfeta} }(\bu ): \bD^{{\bfeta} }(\bq)\\
			&+\int_0^{T}\int_\Gamma \Delta\bfeta\cdot\Delta \bfpsi+\gamma  \int_0^{T}\int_\Gamma \Lambda^{{1+s}}\partial_t\bfeta:\Lambda^{{1+s}} \bfpsi+\int_{\sO}J_{\bfeta_0}\bu_0\bq(0)   + \int_\Gamma \bv_0\bfpsi(0).
		\end{split}
	\end{equation}
\end{defn}
\begin{remark}\label{equiv}
	We note that if the ALE map $\bA_\bfeta(t): \sO \mapsto \sO_\bfeta(t)$, defined as the solution to \eqref{ale}, {is Lipschitz continuous} and bijective, then Definitions \ref{weakform_moving} and \ref{weakform_steady} are equivalent. In other words, $(\bu,\bfeta)$ solves \eqref{weaksol} iff $(\tilde\bu,\bfeta)$ where $\tilde\bu= \bu\circ \bA^{-1}_\bfeta$ solves \eqref{origweakform} i.e. it is the desired weak solution of our FSI problem in the sense of Definition \ref{weakform_moving}. 
\end{remark}

\begin{remark}\label{notation}{\bf{(Notation)}}
Throughout the rest of the manuscript we will be using  $\tilde\bu$, where $$\tilde\bu= \bu\circ \bA^{-1}_\bfeta,$$ 
to denote the fluid velocity defined on the moving domain $\sO_\bfeta$, to distinguish between the solution $\bu$ defined on the fixed domain $\sO$, 
and the solution $\tilde\bu$ defined on the moving domain $\sO_\bfeta$. 
\end{remark}

	Next, we discuss conditions that are sufficient to imply bijectivity of the ALE maps $\bA_\bfeta$.
	First, observe that for any $k \geq 0$ the solution to \eqref{ale} satisfies (see e.g. \cite{G11}):
	\begin{align}\label{ale_Hs}	\|\bA_\bfeta\|_{\bH^{k+\frac12}(\sO)} \leq \|\bfeta\|_{\bH^k(\Gamma)}.
	\end{align}
	Observe also that, for any $p\geq2$, we have the following regularity result for the harmonic extension $\bA_\bfeta$ of the boundary data ${\bf id} +\bfeta$ thanks to the discussion presented in Section 5 in \cite{G11}:
	\begin{align}\label{boundsA1}
		\|\bA_\bfeta-{\bf id}\|_{\bW^{2,p}(\sO)} \leq C\|{\bfeta}\|_{\bW^{2-\frac1{p},p}(\Gamma)} \leq C\|{\bfeta}\|_{\bH^{3-\frac3p}(\Gamma)} .
	\end{align}
	Hence, Morrey's inequality (see e.g. Theorem 7.26 in \cite{GT83}) implies that for some $C^*_p>0$ the following inequality holds true for $p>3$,
	\begin{align}\label{boundJ}
		\|\nabla (\bA_\bfeta-{\bf id})\|_{\bC^{0,1-\frac3p}( \bar\sO)}\leq 	\|\nabla (\bA_\bfeta-{\bf id})\|_{\bW^{1,p}( \bar\sO)} \leq C^*_p\|{\bfeta}\|_{\bH^{3-\frac3p}(\Gamma)}.
	\end{align}
	Now thanks to Theorem 5.5-1 (B) of \cite{C88}, for as long as the structure displacement $\bfeta$ satisfies 
	\begin{align}\label{bound_injective}
		\|\bfeta\|_{\bH^{3-\frac3p}(\Gamma)} \leq \frac1{C^*_p} , \qquad \text{for any } p>3,
	\end{align}
	the map $\bA_\bfeta\in \bC^{1,1-\frac3p}(\bar\sO)$ is injective. Thanks to invariance of domains (see \cite{C88}), we infer that $\bA_\bfeta$ is thus a bijection between the domains $\sO$ and $\sO_\bfeta$.

We have established the following
\begin{proposition}\label{equiv}
If for any $\delta>0$, $\bfeta\in L^\infty(0,T;\bH^{2+\delta}(\Gamma))$ then for some small enough $T_0>0$, the map $\bA_\bfeta\in L^\infty(0,T_0;\bC^{1,\delta}(\bar\sO))$ solving \eqref{ale} is bijective and Definitions \ref{weakform_moving} and \ref{weakform_steady} are thus equivalent.
\end{proposition}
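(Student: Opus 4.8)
\noindent\emph{Proof strategy.} The plan is to derive the proposition from the elliptic estimates \eqref{boundsA1}--\eqref{bound_injective} already recorded for the harmonic extension, the one new ingredient being an upgrade of the \emph{assumed} $L^\infty$-in-time bound on $\bfeta$ to the \emph{pointwise}-in-time smallness that the injectivity criterion \eqref{bound_injective} requires. First I would fix the exponents: without loss of generality $\delta\in(0,1)$ (the case $\delta\ge1$ follows by shrinking $\delta$, which only weakens the Hölder exponent in the conclusion), and I set $p:=3/(1-\delta)>3$, so that $3-\tfrac3p=2+\delta$ and $1-\tfrac3p=\delta$. Then \eqref{boundsA1} and \eqref{boundJ}, applied for a.e.\ $t\in[0,T]$, give $\|\bA_\bfeta(t)-{\bf id}\|_{\bC^{1,\delta}(\bar\sO)}\le C\|\bfeta(t)\|_{\bH^{2+\delta}(\Gamma)}$, hence $\bA_\bfeta\in L^\infty(0,T;\bC^{1,\delta}(\bar\sO))$ with norm controlled by $\|\bfeta\|_{L^\infty(0,T;\bH^{2+\delta}(\Gamma))}$; this already yields the claimed regularity on all of $[0,T]$, so only the bijectivity for short time remains.

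Next I would recover time-continuity of $\bfeta$ in a slightly weaker spatial norm. Since $\bfeta\in\sW_S(0,T)$ in our setting, $\partial_t\bfeta\in L^\infty(0,T;\bL^2(\Gamma))$, so $\bfeta$ is Lipschitz in time into $\bL^2(\Gamma)$; interpolating with the bound above, for any $0<\delta'<\delta$ and $\theta:=(2+\delta')/(2+\delta)\in(0,1)$ one gets $\|\bfeta(t)-\bfeta(t')\|_{\bH^{2+\delta'}(\Gamma)}\le\|\bfeta(t)-\bfeta(t')\|_{\bL^2(\Gamma)}^{1-\theta}\|\bfeta(t)-\bfeta(t')\|_{\bH^{2+\delta}(\Gamma)}^{\theta}\le C|t-t'|^{1-\theta}$, so a representative of $\bfeta$ lies in $C^{0,1-\theta}([0,T];\bH^{2+\delta'}(\Gamma))$ and $\bfeta(0)=\bfeta_0$ is well-defined there. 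With $p':=3/(1-\delta')>3$ (so $3-\tfrac3{p'}=2+\delta'$) and $C^*_{p'}$ the constant from \eqref{boundJ}, and assuming $\|\bfeta_0\|_{\bH^{2+\delta'}(\Gamma)}<1/C^*_{p'}$ — true for the undeformed configuration $\bfeta_0\equiv0$, and more generally under a smallness hypothesis on the initial data — this continuity produces $T_0\in(0,T]$, depending only on $C^*_{p'}$, $\|\bfeta_0\|_{\bH^{2+\delta'}(\Gamma)}$, $\|\partial_t\bfeta\|_{L^\infty(0,T;\bL^2(\Gamma))}$ and $\|\bfeta\|_{L^\infty(0,T;\bH^{2+\delta}(\Gamma))}$, such that $\|\bfeta(t)\|_{\bH^{2+\delta'}(\Gamma)}<1/C^*_{p'}$ for all $t\in[0,T_0]$. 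By \eqref{bound_injective} (Theorem~5.5-1(B) of \cite{C88}) the map $\bA_\bfeta(t)$ is then injective, the $\bC^0$-smallness of $\nabla(\bA_\bfeta(t)-{\bf id})$ keeps $\det\nabla\bA_\bfeta(t)>0$, and by invariance of domain $\bA_\bfeta(t):\sO\to\sO_\bfeta(t)$ is a $\bC^1$-diffeomorphism satisfying \eqref{diffeo} for every $t\in[0,T_0]$.

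Once $\bA_\bfeta(t)$ is a (Lipschitz) bijection on $[0,T_0]$, I would conclude with the change of variables of the remark following Definition~\ref{weakform_steady}: the substitution $\tilde\bu=\bu\circ\bA_\bfeta^{-1}$, together with $\bq\mapsto\bq\circ\bA_\bfeta^{-1}$ on test functions, carries \eqref{weaksol} into \eqref{origweakform} and back, because the factor $J_\bfeta$, the transformed operators $\nabla^\bfeta$, $\bD^\bfeta$, the ALE velocity $\bw^\bfeta$ and the surface Jacobian $S_\bfeta$ are exactly the terms generated by this substitution, the two equivalent forms of the kinematic coupling correspond under it, and the test-function classes $\sD^\bfeta(0,T_0)$ and $\tilde\sD(0,T_0)$ correspond as well. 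Hence Definitions~\ref{weakform_moving} and \ref{weakform_steady} are equivalent on $[0,T_0]$.

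The hard part is the bridge used in the second paragraph: the hypothesis only delivers an $L^\infty$-in-time bound on $\|\bfeta(t)\|_{\bH^{2+\delta}(\Gamma)}$, which may well exceed $1/C^*_{p'}$, whereas \eqref{bound_injective} needs smallness at \emph{every} time. Interpolating the energy bound against $\partial_t\bfeta\in L^\infty(0,T;\bL^2(\Gamma))$ to buy Hölder-in-time continuity — at the cost of an arbitrarily small loss of spatial regularity, from $2+\delta$ to $2+\delta'$ — is what makes this work, provided $\bfeta_0$ is small enough. A secondary point is to keep apart mere topological bijectivity (from invariance of domain) and the genuine $\bC^1$-diffeomorphism property; the latter also needs the non-degeneracy $\det\nabla\bA_\bfeta>0$, which one again reads off from the $\bC^0$-closeness of $\nabla\bA_\bfeta$ to the identity on short time intervals.
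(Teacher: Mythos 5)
Your chain — harmonic-extension regularity \eqref{boundsA1}, Morrey's inequality \eqref{boundJ}, Ciarlet's injectivity criterion \eqref{bound_injective}, invariance of domain, and the change-of-variables equivalence of the two weak formulations — is precisely the argument the paper lays out in the paragraph preceding the Proposition, and your choice $p=3/(1-\delta)$ so that $3-\tfrac3p=2+\delta$ and $1-\tfrac3p=\delta$ is the right exponent bookkeeping. The genuine addition in your proposal is the continuity-in-time bridge: the paper jumps from the criterion \eqref{bound_injective} to ``We have established the following'' without explaining how the $L^\infty$-in-time hypothesis produces a $T_0$, and your interpolation of $\partial_t\bfeta\in L^\infty(0,T;\bL^2(\Gamma))$ (available because $\bfeta\in\sW_S$ for weak solutions) against the $L^\infty_t\bH^{2+\delta}_x$ bound to obtain $\bfeta\in C^{0,1-\theta}([0,T];\bH^{2+\delta'}(\Gamma))$ supplies exactly that missing step. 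Two caveats are worth noting. First, you have used $\partial_t\bfeta\in L^\infty_t\bL^2_x$, which is not in the bare hypothesis of the Proposition but is available in context; you should state that you are using the ambient membership $\bfeta\in\sW_S(0,T)$. Second, you need $\|\bfeta_0\|_{\bH^{2+\delta'}(\Gamma)}<1/C^*_{p'}$, because \eqref{bound_injective} is a perturbation-of-identity criterion; you correctly flag this as an implicit smallness hypothesis. The paper itself later avoids this in the bootstrap argument by perturbing around $\bA_{\bfeta_0}$ rather than around ${\bf id}$ (cf.\ \eqref{smalltime}--\eqref{JacBound}); one could do the same here, estimating $\|\bA_{\bfeta(t)}-\bA_{\bfeta_0}\|_{\bW^{1,\infty}(\sO)}\le C\|\bfeta(t)-\bfeta_0\|_{\bH^{2+\delta'}(\Gamma)}\le C t^{1-\theta}$ and invoking stability of injectivity for $C^1$-perturbations of the (assumed) diffeomorphism $\bA_{\bfeta_0}$, which trades your smallness of $\bfeta_0$ for the weaker and unavoidable standing assumption that the initial configuration is non-degenerate.
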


\subsection{Main results}
We are now in a position to state the main results of this article. 

In the first two theorems we will state the 
enhanced spatial regularity of the structure and the enhanced temporal regularity of the fluid and structure velocities. 
 
Before stating these results we recall the definition of Nikolski spaces. 	Let	the translation in time by $h$ of a function $f$ be denoted by:
$$\tau_h f(t,\cdot)=f(t-h,\cdot), \quad h\in \R.$$
Let $\mathcal{Y}$ be a Banach space.
Then, for any $0<m<1$ and $1\leq p<\infty$, the Nikolski space is defined as:
\begin{equation}\label{Nikolski}
	{N^{m,p}(0,T;\mathcal{Y})}=\{\bu\in L^p(0,T;\mathcal{Y}):\sup_{0<h< T} \frac1{h^m} \|\tau_h\bu-\bu\|_{L^p(h,T;\mathcal{Y})}<\infty\}.
\end{equation}
Now we state our a priori estimates that provide additional regularity  for the structure displacement and the fluid velocity. These are our first two main results of the manuscript. 
\begin{thm}\label{thm:Lip1}
	Let $(\tilde\bu,\bfeta)$ be a smooth solution to the FSI problem defined on moving domains, satisfying \eqref{origweakform}.
	Then the following a priori estimates addressing spatial regularity hold true:
	\begin{enumerate}
		\item The structure displacement $\bfeta$ satisfies:
		\begin{align}\label{reg_eta}
			\|\bfeta\|_{L^\infty(0,T_0;\bH^{2+\delta}(\Gamma))} +\|\bfeta\|_{L^2(0,T_0;\bH^{3-(s-\delta)}(\Gamma))} <C,\qquad \text{for any }\, 0<\delta< s.
		\end{align}
		\item Moreover, the ALE maps defined by \eqref{ale} satisfy
		\begin{align}\label{reg_ale}
			\|\bA_\bfeta\|_{L^\infty(0,T_0;\bC^{1,\delta}(\bar\sO))} <C.	
		\end{align}
		\end{enumerate}
		Here, $C$ depends only on the energy norm of the initial data, as well as the $H^{2+s}$-norm of the initial displacement $\bfeta_0$, 
		and on the viscoelasticity coefficient $\gamma > 0$ and on domain $\sO$. 
\end{thm}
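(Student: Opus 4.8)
The plan is to test the structure equation \eqref{eta} with a carefully chosen test function that picks up a fractional power of $\Lambda$ beyond the energy-level regularity, and to control the resulting fluid traction term by constructing a solenoidal extension of that test function into the moving fluid domain. Concretely, I would work with the test pair $(\bq,\bfpsi)$ where $\bfpsi = \Lambda^{2\delta}\partial_t\bfeta$ (or a suitable time-smoothed/regularized version, to make the manipulations rigorous on the smooth solution), and $\bq$ is a divergence-free (in the $\nabla^\bfeta$ sense) extension of $\bfpsi$ into $\sO$ satisfying the kinematic coupling $\bq|_\Gamma = \bfpsi$. Plugging this into the coupled weak formulation \eqref{origweakform}/\eqref{weaksol}, the principal elastic term $\int_\Gamma \Delta\bfeta : \Delta\bfpsi$ becomes $\frac{d}{dt}\frac12\|\Lambda^{2+\delta}\bfeta\|_{L^2(\Gamma)}^2$ up to lower-order commutator terms, and the damping term $\gamma\int_\Gamma \Lambda^{1+s}\partial_t\bfeta : \Lambda^{1+s}\bfpsi = \gamma\|\Lambda^{1+s+\delta}\partial_t\bfeta\|_{L^2(\Gamma)}^2$ gives the dissipation that supplies the second bound $\bfeta\in L^2_t H^{3-(s-\delta)}_x$. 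After time integration this yields \eqref{reg_eta}, provided all the remaining terms — the fluid inertia, advection, the viscous term $2\nu\int_\sO J_\bfeta \bD^\bfeta(\bu):\bD^\bfeta(\bq)$, and the time-derivative-of-Jacobian term — can be absorbed using the basic energy estimate \eqref{apriori} together with the interpolation gained from the extra $\delta$ of dissipation, with $\delta < s$ being exactly the slack needed so that $\|\Lambda^{1+s+\delta}\partial_t\bfeta\|_{L^2}^2$ absorbs these contributions via Young's inequality.

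The heart of the argument, and the step I expect to be the main obstacle, is the construction of the fluid test function $\bq$: one needs $\bq \in H^1(\sO)$ with $\nabla^\bfeta\cdot\bq = 0$ in $\sO$, $\bq|_\Gamma = \bfpsi = \Lambda^{2\delta}\partial_t\bfeta$, $\bq|_{\Gamma_r}=0$, together with the crucial quantitative bound $\|\bq\|_{H^1(\sO)} \lesssim \|\bfpsi\|_{H^{1/2}(\Gamma)} + (\text{lower order})$ with constants depending only on the energy norm of $\bfeta$ (so uniformly on $[0,T_0]$). Because $\partial_t\bfeta$ only has the regularity $L^2_t H^{1+s}_x$, the boundary datum $\Lambda^{2\delta}\partial_t\bfeta$ lives in $H^{1+s-2\delta}_x \subset H^{1/2}_x$ (using $\delta<s$ and $s>0$), so the trace is admissible, but the extension must be built on the $\bfeta$-deformed geometry. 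Following the strategy indicated in the introduction (generalizing \cite{MS22}), I would obtain $\bq$ as (the pullback of) the solution of a non-homogeneous time-dependent Stokes problem on $\sO_\bfeta(t)$ with the prescribed boundary velocity; the divergence constraint $\nabla^\bfeta\cdot\bq=0$ on the fixed domain corresponds to $\nabla\cdot\tilde\bq=0$ on the moving domain, and the Stokes regularity theory on a $C^{1,\delta}$ (or even Lipschitz) domain gives the needed $H^1$ bound with constants controlled by $\|\bA_\bfeta\|_{C^{1,\delta}}$. This is where one must be careful about the logical order: the bound \eqref{reg_ale} on $\bA_\bfeta$ follows from \eqref{reg_eta} via \eqref{boundsA1}–\eqref{boundJ}, so strictly speaking the argument should be run on a short interval $[0,T_0]$ by a continuity/bootstrap argument — one assumes an a priori bound $\|\bfeta\|_{L^\infty_t H^{2+\delta}} \le 2C_0$ on $[0,T_0]$, uses it to justify the extension construction and close the estimate with a strictly better constant for $T_0$ small, and then concludes by the usual continuation argument.

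Once \eqref{reg_eta} is in hand, part (2) is immediate: \eqref{boundsA1} with $p = 3/(1-\delta)>3$ gives $\|\bA_\bfeta-\mathbf{id}\|_{W^{2,p}(\sO)} \le C\|\bfeta\|_{H^{3-3/p}(\Gamma)} = C\|\bfeta\|_{H^{2+\delta}(\Gamma)}$, and then Morrey's embedding \eqref{boundJ} yields $\|\nabla(\bA_\bfeta-\mathbf{id})\|_{C^{0,1-3/p}(\bar\sO)} = \|\nabla(\bA_\bfeta-\mathbf{id})\|_{C^{0,\delta}(\bar\sO)} \le C^*_p\|\bfeta\|_{H^{2+\delta}(\Gamma)}$, so $\bA_\bfeta \in L^\infty(0,T_0;\bC^{1,\delta}(\bar\sO))$ with the constant depending only on the quantities listed; shrinking $T_0$ further if necessary so that \eqref{bound_injective} holds makes $\bA_\bfeta$ bijective, consistent with Proposition \ref{equiv}. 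The remaining routine points are: (i) justifying the use of $\Lambda^{2\delta}\partial_t\bfeta$ as a test function (density/regularization, using that the solution is assumed smooth), (ii) handling the commutators between $\Lambda^{2\delta}$ and $\Delta^2$, which are of lower order and controlled by the energy norm, and (iii) estimating the fluid-side terms — for the viscous term one writes $2\nu\int_\sO J_\bfeta \bD^\bfeta(\bu):\bD^\bfeta(\bq) \le C\|\bD^\bfeta(\bu)\|_{L^2}\|\bq\|_{H^1}\|\nabla\bA_\bfeta\|_{L^\infty}\cdots \le \epsilon\|\Lambda^{1+s+\delta}\partial_t\bfeta\|_{L^2}^2 + C_\epsilon(\text{energy})$, and similarly for the inertial and advective terms, using the 3D Sobolev embeddings and the uniform-in-time bounds from \eqref{apriori} and the bootstrap hypothesis.
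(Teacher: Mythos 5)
Your overall framework---test the coupled weak formulation with a fractional multiplier, build the fluid test function as a solenoidal extension via a Stokes problem on the moving geometry, then close by a short-time bootstrap using \eqref{boundsA1}--\eqref{boundJ}---is the right one, and your treatment of part (2) is exactly the paper's argument. But the specific multiplier you chose is different from the paper's in a way that matters, and it creates genuine gaps.

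You test with $\bfpsi=\Lambda^{2\delta}\partial_t\bfeta$ (a velocity multiplier). The paper tests with $\bfpsi\sim\Lambda^{2\kappa}\bfeta$ for $\kappa=1-(s-\delta)$ (a \emph{displacement} multiplier). With the displacement multiplier the roles of the two structure bilinear forms are swapped relative to what you wrote: the $\gamma$-damping term becomes a total time derivative, $\gamma\int_\Gamma\Lambda^{1+s}\partial_t\bfeta:\Lambda^{1+s+2\kappa}\bfeta=\tfrac{\gamma}{2}\frac{d}{dt}\|\Lambda^{1+\kappa+s}\bfeta\|_{L^2}^2$, supplying the $L^\infty_tH^{1+\kappa+s}=L^\infty_tH^{2+\delta}$ bound on $\bfeta$; the stiffness term $\int_\Gamma\Delta\bfeta\cdot\Delta\Lambda^{2\kappa}\bfeta=\|\Lambda^{2+\kappa}\bfeta\|_{L^2}^2$ becomes a positive bulk term supplying $L^2_tH^{2+\kappa}=L^2_tH^{3-(s-\delta)}$ for $\bfeta$; and the structural inertia integrates by parts into $\int_0^T\|\Lambda^\kappa\partial_t\bfeta\|_{L^2}^2$, which is controlled by the basic energy estimate because $\kappa<1\le 1+s$. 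With your velocity multiplier, by contrast, the damping term produces $\gamma\int_0^T\|\Lambda^{1+s+\delta}\partial_t\bfeta\|_{L^2}^2$, which is a bound on $\partial_t\bfeta$ in $L^2_tH^{1+s+\delta}$ and \emph{not} the claimed bound $\bfeta\in L^2_tH^{3-(s-\delta)}$ (the exponents agree only when $s=1$); and the inertia produces the endpoint term $\tfrac12\|\Lambda^\delta\partial_t\bfeta(T)\|_{L^2}^2-\tfrac12\|\bv_0\|_{H^\delta}^2$, so the constant would depend on $\|\bv_0\|_{H^\delta}$, whereas the theorem states dependence only on the $L^2$-energy norm of $\bv_0$. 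Finally, because $\bfpsi=\Lambda^{2\delta}\partial_t\bfeta$ lives, by the energy estimate, only in $L^2_tH^{1+s-2\delta}(\Gamma)$, for $s>1/2$ and $\delta$ close to $s$ this trace drops below $H^{1/2}$ and you lose the $H^1$ solenoidal extension you need; the displacement-based datum $\Lambda^{2\kappa}\bfeta$ sits at a higher regularity level (it is bounded through $\|\bfeta\|_{L^2_tH^{2+\kappa}}$, the very quantity being absorbed), which is what makes the Stokes-extension estimate \eqref{varphitrace} close.

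One further missing ingredient, independent of the choice of multiplier: the Stokes boundary datum on $\Gamma$ for the solenoidal extension must satisfy the zero-flux compatibility condition. Neither $\Lambda^{2\kappa}\bfeta$ nor $\Lambda^{2\delta}\partial_t\bfeta$ does so automatically, and the paper fixes this by adding the corrector $c\,\bfxi$ with $c$ chosen in \eqref{defc} precisely so that $\int_\Gamma\nabla\bfeta\times(\Lambda^{2\kappa}\bfeta-c\bfxi)=0$; the boundary datum passed to \eqref{stokes} is additionally pre-multiplied by $J_\bfeta(\nabla\bA_\bfeta)^{-1}|_\Gamma$ so that the Piola transform \eqref{transformdiv} turns the plain divergence-free field $\bfvarphi$ into the $\nabla^\bfeta$-divergence-free test function $\bq$. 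Your proposal does not mention either the compatibility corrector or the Piola conversion, and without them the Stokes problem you intend to solve is either ill-posed or produces a $\bq$ that is not $\nabla^\bfeta$-divergence-free.
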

		
\begin{thm}\label{thm:Lip2}		
		Let $(\tilde\bu,\bfeta)$ be a smooth solution to the FSI problem defined on moving domains, satisfying \eqref{origweakform}.
	Then
		the  fluid and structure velocities $(\tilde\bu,\partial_t\bfeta)$ 
				 satisfy the following a priori estimate addressing temporal regularity property:
		\begin{align}\label{reg_temp}
			\|\tilde\bu\circ\bA_\bfeta\|_ {N^{\frac18,2}(0,T_0;\bL^2(\sO))} + \|\partial_t\bfeta\|_{N^{\frac18,2}(0,T_0;\bL^2(\Gamma))}< C.
		\end{align}
		Here, $C$ depends only on the energy norm of the initial data, as well as the $H^{2+s}$-norm of the initial displacement $\bfeta_0$, 
		and on domain $\sO$. 
\end{thm}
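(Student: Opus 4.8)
The plan is to prove the asserted bound in its equivalent \emph{difference‑quotient} form. Since $\bu=\tilde\bu\circ\bA_\bfeta$ (Remark~\ref{notation}) and $J_\bfeta$ is bounded above and below and $\tfrac12$‑Hölder continuous in time by the energy estimate \eqref{apriori} and Theorem~\ref{thm:Lip1}, it suffices to show, uniformly in $0<h<T_0$,
\[
\mathcal I(h):=\int_h^{T_0}\!\Big(\|\bu(t)-\bu(t-h)\|_{\bL^2(\sO)}^2+\|\partial_t\bfeta(t)-\partial_t\bfeta(t-h)\|_{\bL^2(\Gamma)}^2\Big)\,dt\ \le\ C\,h^{1/4}.
\]
Writing one factor in each difference as $\int_{t-h}^t\partial_\tau(\cdot)\,d\tau$ and interchanging the order of integration through the window–overlap identity $\int_h^{T_0}\!\int_{t-h}^t g\,d\tau\,dt=\int_0^{T_0} g(\tau)\,\big|(\tau,\tau+h)\cap(h,T_0)\big|\,d\tau$, one reduces $\mathcal I(h)$, up to terms controlled by the time–regularity of $J_\bfeta$, to an expression of the form $\int_0^{T_0}\big(\langle\partial_\tau(J_\bfeta\bu)(\tau),\bq_\tau\rangle_{\sO}+\langle\partial_\tau^2\bfeta(\tau),\bfpsi_\tau\rangle_{\Gamma}\big)\,d\tau$, where $(\bq_\tau,\bfpsi_\tau)$ should essentially be the time–averaged velocity increment $\int_\tau^{\tau+h}\big((\bu,\partial_t\bfeta)(t)-(\bu,\partial_t\bfeta)(t-h)\big)\,dt$ --- a ``time‑regularized'' modification of the velocities in the spirit of \cite{CDEG,G08} --- corrected so as to be an admissible test direction.

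The construction of this admissible test direction is the technical core. Setting $\bfpsi_\tau:=\big[\bfeta(\tau+h)-\bfeta(\tau)\big]-\big[\bfeta(\tau)-\bfeta(\tau-h)\big]$ and the raw fluid part $\bar\bq_\tau:=\int_\tau^{\tau+h}\big(\bu(t)-\bu(t-h)\big)\,dt$, the kinematic coupling condition $\bu|_\Gamma=\partial_t\bfeta$ forces $\bar\bq_\tau|_\Gamma=\bfpsi_\tau$ automatically, but $\bar\bq_\tau$ is not $\nabla^{\bfeta(\tau)}$‑divergence‑free, since $\bu(t)$ is divergence‑free only in the geometry of $\sO_\bfeta(t)$, not of $\sO_\bfeta(\tau)$. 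Using $\bfeta\in H^1(0,T_0;\bH^{1+s}(\Gamma))\hookrightarrow C^{0,1/2}(0,T_0;\bH^{1+s}(\Gamma))$ together with the Lipschitz bound \eqref{reg_ale} on the ALE maps, one shows that $\|\mathrm{div}^{\bfeta(\tau)}\bar\bq_\tau\|_{\bL^2(\sO)}$ is small, of order $h$ (one factor $h^{1/2}$ from the time–continuity of $\bA_\bfeta$, one from Cauchy--Schwarz against $\|\bu\|_{L^2_t\bH^1}$ over the averaging window). A Bogovskii‑type operator on the fixed Lipschitz domain $\sO$, adapted to the transformed divergence $\mathrm{div}^{\bfeta(\tau)}$ by conjugation with the bi‑Lipschitz map $\bA_{\bfeta(\tau)}$, then supplies a corrector $\mathbf{b}_\tau$ with $\mathrm{div}^{\bfeta(\tau)}\mathbf{b}_\tau=-\mathrm{div}^{\bfeta(\tau)}\bar\bq_\tau$, $\mathbf{b}_\tau|_{\partial\sO}=0$ and $\|\mathbf{b}_\tau\|_{\bH^1(\sO)}\lesssim\|\mathrm{div}^{\bfeta(\tau)}\bar\bq_\tau\|_{\bL^2(\sO)}$. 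Then $\bq_\tau:=\bar\bq_\tau+\mathbf{b}_\tau$ is $\nabla^{\bfeta(\tau)}$‑divergence‑free, still satisfies $\bq_\tau|_\Gamma=\bfpsi_\tau$, and (using $\bu,\partial_t\bfeta\in L^\infty(0,T_0;\bL^2)$) obeys $\|\bq_\tau\|_{\bL^2(\sO)},\|\bfpsi_\tau\|_{\bL^2(\Gamma)}\lesssim h$, together with $\|\bq_\tau\|_{\bH^1(\sO)}\lesssim h^{1/2}\big(\int_\tau^{\tau+h}\|\bu(t)-\bu(t-h)\|_{\bH^1}^2\,dt\big)^{1/2}$ and $\|\bfpsi_\tau\|_{\bH^{1+s}(\Gamma)}\lesssim h^{1/2}\big(\int_\tau^{\tau+h}\|\partial_t\bfeta(t)-\partial_t\bfeta(t-h)\|_{\bH^{1+s}}^2\,dt\big)^{1/2}$, up to lower‑order corrector contributions. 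The Bogovskii‑type operator on this (time‑varying) Lipschitz geometry is itself of independent interest.

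With this test direction in hand, I would insert it into the pointwise‑in‑time, ALE‑transformed coupled identity valid for a smooth solution --- one in which the fluid traction $F_\bfeta$ and the interface stress term $\int_{\Gamma_\bfeta}\sigma\bn^\bfeta\cdot\bq$ cancel through the dynamic coupling condition --- namely
\[
\langle\partial_\tau(J_\bfeta\bu),\bq_\tau\rangle_{\sO}+\langle\partial_\tau^2\bfeta,\bfpsi_\tau\rangle_{\Gamma}=\int_\sO\partial_\tau J_\bfeta\,\bu\cdot\bq_\tau-\int_\sO J_\bfeta(\bu-\bw^\bfeta)\cdot\nabla^\bfeta\bu\cdot\bq_\tau-2\nu\int_\sO J_\bfeta\,\bD^\bfeta(\bu):\bD^\bfeta(\bq_\tau)-\int_\Gamma\Delta\bfeta:\Delta\bfpsi_\tau-\gamma\int_\Gamma\Lambda^{1+s}\partial_\tau\bfeta:\Lambda^{1+s}\bfpsi_\tau,
\]
integrate over $\tau\in(0,T_0)$, and estimate term by term. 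The Jacobian‑derivative term, the (transformed) convection term, the viscous term and the fractional‑damping term each pair a coefficient that is integrable in $\tau$ (after using \eqref{apriori} and Theorem~\ref{thm:Lip1}) against a factor of $\bq_\tau$ or $\bfpsi_\tau$ carrying a power‑of‑$h$ gain; applying Cauchy--Schwarz in $\tau$, the window–overlap bound $\int_0^{T_0}\!\int_\tau^{\tau+h}g\,dt\,d\tau\le h\int_0^{T_0}g$, and, where needed, Young's inequality to absorb a multiple of $\mathcal I(h)$ into the left‑hand side, these terms contribute a strictly positive power of $h$ to $\mathcal I(h)$ (in fact $O(h^{1/2})$ or better).

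The main obstacle --- and the reason the Nikolski exponent lands at $\tfrac18$ rather than higher --- is the structure bending term $\int_0^{T_0}\!\int_\Gamma\Delta\bfeta:\Delta\bfpsi_\tau\,d\tau$. The test function $\bfpsi_\tau$ is a second time‑difference of $\bfeta$, and hence carries $h$‑smallness only in $\bH^{1+s}(\Gamma)$ (with $s$ possibly much smaller than $1$), whereas here it is paired against $\Delta^2\bfeta$, for which the basic energy estimate alone would require $\bfpsi_\tau\in\bH^2(\Gamma)$ --- precisely the ``mismatch in spatial regularity between the structure velocity and its test function.'' This gap is closed by invoking the improved spatial regularity $\bfeta\in L^2(0,T_0;\bH^{3-(s-\delta)}(\Gamma))$ from Theorem~\ref{thm:Lip1}: interpolating, for each increment, the $h^{1/2}$‑small $\bH^{1+s}$‑bound against the $h$‑uniform $\bH^{3-(s-\delta)}$‑bound yields $\|\bfpsi_\tau\|_{\bH^2(\Gamma)}\lesssim h^{1/4}\,k(\tau)$ with $k\in L^1(0,T_0)$ (the exponent $1/4$ being $\tfrac12(1-\theta)$ with $\theta\to\tfrac12$ the interpolation parameter as $\delta\to0$), so that the bending term is $O(h^{1/4})$. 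Collecting all the estimates gives $\mathcal I(h)\le C h^{1/4}$, which is exactly the claimed bound in $N^{1/8,2}(0,T_0;\bL^2)$ for $\bu=\tilde\bu\circ\bA_\bfeta$ and for $\partial_t\bfeta$; this completes the proof.
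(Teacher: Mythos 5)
Your strategy is genuinely different from the paper's in both of its technical pillars, and one of the two pillars fails. On the \emph{structure} side, you replace the paper's finite--dimensional spectral projection $P_M$ (with the tuned scaling $\lambda_M\sim h^{-3/4}$) by the raw second time difference $\bfpsi_\tau=\bfeta(\tau+h)-2\bfeta(\tau)+\bfeta(\tau-h)$ and close the bending term by interpolating the $h^{1/2}$--small $\bH^{1+s}$ norm against the $\bH^{3-(s-\delta)}$ norm supplied by Theorem~\ref{thm:Lip1}. That is a legitimate and in fact cleaner alternative to the projection argument; it correctly exploits that $\bfpsi_\tau\in\bH^{2+\delta}$ is already admissible, and the exponent bookkeeping $\theta\to 1/2$ does deliver at least the paper's $O(h^{1/4})$ for $\mathcal I(h)$.

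On the \emph{fluid} side, however, there is a gap you cannot close as written. You take the bare time-averaged increment $\bar\bq_\tau=\int_\tau^{\tau+h}\big(\bu(t)-\bu(t-h)\big)\,dt$ and claim $\|\operatorname{div}^{\bfeta(\tau)}\bar\bq_\tau\|_{\bL^2(\sO)}=O(h)$, feeding this into a Bogovskii corrector $\mathbf b_\tau\in\bH^1_0(\sO)$. But $\operatorname{div}^{\bfeta(\tau)}\bu(t)=\mathrm{tr}\!\left[\nabla\bu(t)\big((\nabla\bA_{\bfeta(\tau)})^{-1}-(\nabla\bA_{\bfeta(t)})^{-1}\big)\right]$, and the only H\"older-in-time control you actually have is $\bfeta\in C^{0,1/2}(0,T_0;\bH^{1+s}(\Gamma))$, which yields $\|\nabla\bA_{\bfeta(\tau)}-\nabla\bA_{\bfeta(t)}\|_{\bL^3(\sO)}\lesssim h^{1/2}$, \emph{not} an $L^\infty$ bound. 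Pairing that $\bL^3$ difference against $\nabla\bu\in\bL^2$ puts the divergence deficit only in $L^{6/5}(\sO)$, so the Bogovskii corrector lives in $W^{1,6/5}$, which does not pair with the $\bL^2$ symmetrized gradient in the viscous term. (Trying to upgrade to $L^\infty$ by interpolating with $\bfeta\in L^\infty\bH^{2+\delta}$ only buys the tiny exponent $\theta<\delta/(2+\delta)$, not $1/2$.) You also never verify the zero-mean compatibility condition needed to solve the Bogovskii problem with $\mathbf b_\tau|_{\partial\sO}=0$; the transformed flux $\int_\Gamma\operatorname{cof}(\nabla\bA_{\bfeta(\tau)})\mathbf e_3\cdot\bfpsi_\tau$ is not zero and must be corrected, which is exactly the role of the $\mathsf c_M(s,t)\bfxi_0\chi$ term in \eqref{test_f}. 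The paper avoids the whole issue by Piola--transporting $\bu(s)$ to the geometry at time $t$ via $P_t^{-1}P_s$ \emph{before} averaging, so the transported field is exactly $\operatorname{div}^{\bfeta(t)}$--free pointwise in $s$ (see~\eqref{stot}), and the Bogovskii corrector is then only asked to repair the smooth extension and the projection error, which are controlled in $\bL^2$ by construction. That Piola step is the genuinely load-bearing idea that your proposal is missing.
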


Our third main result of the manuscript is the existence of a weak solution to the nonlinearly coupled problem, as stated in the following theorem.

\begin{thm}\label{thm:exist}
	Let the initial data for structure displacement, structure velocity and fluid velocity be such that  $\bfeta_0 \in \bH^{2+s}(\Gamma),\bv_0\in\bL^2(\Gamma)$ and $\bu_0\in \bL^2(\sO_{\bfeta_0})$. Then there exists $T_0>0$ 
	and at least one weak solution to the system \eqref{u}-\eqref{dynbc} on $[0,T_0]$ in the sense of Definition \ref{weakform_moving}.
\end{thm}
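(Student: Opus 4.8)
The plan is to establish Theorem~\ref{thm:exist} via a constructive Lie operator splitting scheme, following the philosophy of \cite{MC13,MC14,MCG20} but carefully adapted to the 3D vector-displacement setting. First I would fix a time step $\Delta t = T_0/N$ and, on each subinterval, split the coupled problem into a \emph{structure subproblem} (advancing $\bfeta$ and $\partial_t\bfeta$ using the plate operator $\Delta^2 + \gamma\Lambda^{2+2s}\partial_t$, with the fluid trace as data) and a \emph{fluid subproblem} (advancing $\bu$ on the domain $\sO_{\bfeta^n}$ frozen at the previously computed displacement, incorporating the dynamic coupling condition as a natural boundary condition). At each step one solves a linear (or mildly nonlinear, in the convective term) elliptic/parabolic problem, for which existence is classical; the key is to choose the splitting so that the discrete energy estimate mimics \eqref{apriori}, i.e. the sum of kinetic, elastic and dissipative terms is non-increasing. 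I would then define piecewise-constant and piecewise-linear interpolants $\bu^N$, $\bfeta^N$, $\bv^N$ of the discrete solutions on $[0,T_0]$.

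The second stage is to derive uniform (in $N$) bounds. The discrete analogue of the energy inequality gives the basic bounds of $\tilde\sW(0,T_0)$; crucially, the discrete analogues of Theorems~\ref{thm:Lip1} and \ref{thm:Lip2} must be proved at the approximate level. The spatial regularity bound $\bfeta^N \in L^\infty(0,T_0;\bH^{2+\delta})$, uniformly in $N$, is what keeps the ALE maps $\bA_{\bfeta^N}$ uniformly bijective and $\bC^{1,\delta}$-bounded on a time interval $[0,T_0]$ independent of $N$ (using Proposition~\ref{equiv} and \eqref{bound_injective}); this is why $T_0>0$ can be taken uniform and why the construction is genuinely local-in-time — $T_0$ is dictated by when \eqref{bound_injective} could first fail. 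The temporal regularity bound $\partial_t\bfeta^N, \bu^N \in N^{1/8,2}(0,T_0;\bL^2)$, uniform in $N$, is what supplies the extra compactness: combined with the spatial bounds and an Aubin–Lions–Simon type argument (adapted to the moving-domain setting, working with $\bu^N\circ\bA_{\bfeta^N}$ pulled back to the fixed domain $\sO$), it yields strong convergence $\bfeta^N \to \bfeta$ in $L^\infty(0,T_0;\bH^{2+\delta'})$ for $\delta'<\delta$ and in $C([0,T_0];\bH^{2-})$, strong convergence of the velocities in $L^2(0,T_0;\bL^2)$, and (via \eqref{reg_ale}) uniform convergence of $\bA_{\bfeta^N}$, $\nabla\bA_{\bfeta^N}$, and $J_{\bfeta^N}$.

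The third stage is passing to the limit in the approximate weak formulation. Here one must handle the structural difficulty emphasized in the introduction: the admissible test functions live in the $\bfeta^N$-dependent divergence-free space $\sV^{\bfeta^N}_F$, so a fixed test pair $(\bq,\bfpsi)\in\sD^\bfeta(0,T_0)$ for the limit problem must be corrected into test functions $(\bq^N,\bfpsi)$ that are divergence-free with respect to $\bfeta^N$ and satisfy the discrete kinematic coupling $\bq^N|_\Gamma = \bfpsi$. I would construct $\bq^N$ by composing with $\bA_{\bfeta}\circ\bA_{\bfeta^N}^{-1}$ and correcting the resulting divergence error with a Bogovskii-type operator on the Lipschitz moving domain (the operator whose construction is advertised in the introduction), and show $\bq^N\to\bq$ strongly enough (in $L^\infty H^1$ say, with $\partial_t\bq^N\to\partial_t\bq$ in $L^2\bL^2$) to pass to the limit in every term of \eqref{weaksol} — the linear terms by weak-strong pairing, the convective and $\bD^\bfeta$ terms by the strong convergence of velocities together with uniform convergence of the geometric coefficients $J_{\bfeta^N}$, $\nabla^{\bfeta^N}$, $\bw^{\bfeta^N}$. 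The main obstacle, as in all such problems, is exactly this last step: reconciling the $N$-dependence of the function spaces with the compactness available, i.e. simultaneously (i) constructing the corrected test functions with quantitative control of their convergence rate, and (ii) ensuring the convective term $\int J_{\bfeta^N} \bu^N\cdot\nabla^{\bfeta^N}\bu^N\cdot\bq^N$ — which is only borderline controlled by the available regularity in 3D — converges, which is where the hidden temporal regularity of Theorem~\ref{thm:Lip2} is indispensable. Once the limit $(\bu,\bfeta)$ is shown to satisfy Definition~\ref{weakform_steady}, Remark~\ref{equiv} and Proposition~\ref{equiv} transfer it to a weak solution in the sense of Definition~\ref{weakform_moving}, completing the proof.
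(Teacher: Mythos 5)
Your outline reproduces the paper's proof skeleton essentially item for item: Lie operator splitting along the dynamic coupling condition into structure and fluid subproblems, piecewise interpolants, $N$-uniform discrete analogues of Theorems~\ref{thm:Lip1} and \ref{thm:Lip2} to secure a fixed $T_0$ (via a bootstrap/continuity argument on the ALE Jacobian), Aubin--Lions and Nikolski-based compactness, Piola/Bogovskii-corrected test functions converging to the limiting test pair, and transfer from the fixed-domain to the moving-domain formulation via Proposition~\ref{equiv}. So the route is the right one.

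There is, however, one genuine obstruction that you wave at with ``the discrete analogues of Theorems~\ref{thm:Lip1} and \ref{thm:Lip2} must be proved at the approximate level'' but do not address, and it is precisely the point at which that step would not close with the scheme as you describe it. Splitting along the dynamic coupling condition produces \emph{two} structure-velocity iterates per time step, $\bv^{n+\frac12}$ (structure substep) and $\bv^{n+1}$ (fluid substep), and the displacement is advanced by $\bv^{n+\frac12}$, so the piecewise-constant $\bv_N$ appearing in the viscoelastic term of the approximate weak formulation \eqref{weakapprox} is \emph{not} $\partial_t\tilde\bfeta_N=\bv_N^*$. When one replays the proof of Theorem~\ref{thm:Lip1} with the test pair \eqref{Lip_testN}, the term $\gamma\int\int\Lambda^{1+s}\bv_N\cdot\Lambda^{1+s}\bfpsi_N$ therefore does not directly give $\tfrac{d}{dt}\|\Lambda^{1+\kappa+s}\tilde\bfeta_N\|^2$; it gives that plus error terms proportional to $\bv_N-\bv_N^*$ and $\bfeta_N-\tilde\bfeta_N$ paired against $\Lambda^{2+2s+2\kappa}\bfeta_N$, and the basic discrete energy/dissipation bounds alone do not control these (one needs an $L^2_tH^3_x$-type bound on $\bfeta_N$ to absorb them; see \eqref{mismatch} and the two estimates that follow). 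The paper handles this by \emph{modifying the scheme}: the structure subproblem \eqref{first} is augmented with a vanishing $\frac{1}{N}\nabla^3$-penalization whose numerical dissipation, recorded in Theorem~\ref{thm:est}, supplies exactly the extra control required. Without this (or an equivalent stabilization) the discrete spatial regularity argument in your stage two does not go through, and the rest of the construction has nothing $N$-uniform to stand on.
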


In what follows, we will give the proofs of these two theorems. We will start, in Section \ref{sec:reg}, with the proofs Theorems \ref{thm:Lip1} and \ref{thm:Lip2},
and then use these regularity results in Section \ref{sec:exist} to construct a weak solution for \eqref{u}-\eqref{dynbc}, thus proving Theorem \ref{thm:exist}. Specifically, Theorem \ref{thm:Lip1}, which states that at any time the structure displacement is Lipschitz continuous in space, is crucial in obtaining a positive time-length during which the fluid domain remains non-degenerate and thus in transforming the fluid equations onto the fixed domain $\sO$. It is also used in the construction of the Bogovski-type operator constructed in the proof of Theorem \ref{thm:Lip2}. Theorem \ref{thm:Lip2} is used in Section \ref{sec:exist} to obtain compactness of the sequence of approximate solutions to prove the existence of a solution to the FSI problem in the sense of Definition \ref{weakform_steady}.
\section{Regularity results}\label{sec:reg}
\subsection{The structure regularity result.}\label{sec:spatial_reg}

In this section we will prove Theorem \ref{thm:Lip1} showing the a priori regularity result for $\bfeta$. To establish this result we work in the fixed domain setting of Definition \ref{weakform_steady} and operate under the assumption that $\bfeta$ is smooth and that the map $\bA_\bfeta(t):\sO\mapsto\sO_\bfeta(t)$, solving \eqref{ale} is bijective.

In this case, we make note of the following result that gives us the equivalent of the energy estimate \eqref{apriori} for the fixed domain counterparts.
\begin{lem}\label{uest} 
	Let $(\bu,\bfeta)$ be a weak solution in the sense of the Definition \ref{weakform_steady} on the fixed domain $\sO$. Assume that the ALE maps $\bA_\bfeta(t):\sO\mapsto\sO_\bfeta(t)$, solving \eqref{ale}, are bijective and that for some $\alpha>0$ their Jacobians satisfy $\inf_\sO J_\bfeta>\alpha>0$ for all $t\in[0,T]$.
	Then, for some constant $K_1>0$ depending only on $\|\bA_\bfeta\|_{L^\infty(0,T;\bW^{1,\infty}(\sO))}, \|(\bA_\bfeta)^{-1}\|_{L^\infty(0,T;\bW^{1,\infty}(\sO))}$ and $\alpha$ we have,
	$$\|\bu\|_{L^\infty(0,T;\bL^2(\sO))\cap L^2(0,T;\sV_F^{\bfeta})} 
	<K_1.$$ 
\end{lem}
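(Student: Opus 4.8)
The plan is to reproduce, now on the fixed reference domain $\sO$, the formal energy computation carried out in Section \ref{energy}, but tracking carefully the Jacobian weights and ALE transport terms that appear in the fixed-domain weak formulation \eqref{weaksol}. First I would argue that $(\bu,\bfeta)$ itself is an admissible test pair: since $\bq = \bu$ satisfies $\nabla^\bfeta\cdot\bu = 0$, $\bu = 0$ on $\Gamma_r$, $\bfpsi = \partial_t\bfeta = \bu|_\Gamma$ matches the kinematic coupling, this is the natural energy test function (modulo the usual regularization/density argument in time, which is where the hypothesis that $(\bu,\bfeta)$ is a \emph{weak} solution with the stated regularity $\bu \in \sW_F$, $\bfeta \in \sW_S$ is used — one tests with a mollified-in-time version and passes to the limit). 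Substituting into \eqref{weaksol} and integrating by parts in time on $[0,t]$, the term $-\int_\sO J_\bfeta \bu\cdot\partial_t\bu$ combines with $-\int_\sO \partial_t J_\bfeta\, \bu\cdot\bu$ to produce exactly $-\frac{d}{dt}\int_\sO \tfrac12 J_\bfeta |\bu|^2$ (this is the fixed-domain Reynolds identity: $\partial_t(J_\bfeta|\bu|^2) = J_\bfeta \partial_t|\bu|^2 + \partial_t J_\bfeta |\bu|^2$), while the structure terms give $\frac{d}{dt}\int_\Gamma\big(\tfrac12|\partial_t\bfeta|^2 + \tfrac12|\Delta\bfeta|^2\big) + \gamma\int_\Gamma|\Lambda^{1+s}\partial_t\bfeta|^2$.

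Next I would show the remaining fluid terms collapse correctly. The key cancellation is between the advection term $\int_\sO J_\bfeta (\bu\cdot\nabla^\bfeta)\bu\cdot\bu$ and the ALE transport term $-\int_\sO J_\bfeta (\bw^\bfeta\cdot\nabla^\bfeta)\bu\cdot\bu$: writing $\int_\sO J_\bfeta ((\bu-\bw^\bfeta)\cdot\nabla^\bfeta)\bu\cdot\bu = \tfrac12\int_\sO J_\bfeta(\bu-\bw^\bfeta)\cdot\nabla^\bfeta|\bu|^2$ and integrating by parts using the Piola-type identity $\nabla^\bfeta\cdot(J_\bfeta \bv) = J_\bfeta\,\text{div}^\bfeta\bv$ together with $\text{div}^\bfeta\bu = 0$ and $\text{div}^\bfeta\bw^\bfeta = \partial_t J_\bfeta/J_\bfeta$ (the standard ALE compatibility), one sees all of this reduces to a boundary term on $\Gamma$ that, after transforming back, cancels against the corresponding interface contribution already absorbed into the structure energy via the kinematic condition $\bu|_\Gamma = \partial_t\bfeta$ — exactly mirroring the moving-domain computation. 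The viscous term is $2\nu\int_\sO J_\bfeta |\bD^\bfeta(\bu)|^2$, which under the hypotheses $\inf_\sO J_\bfeta > \alpha$ and $\|\bA_\bfeta\|_{\bW^{1,\infty}}, \|\bA_\bfeta^{-1}\|_{\bW^{1,\infty}}$ bounded is coerced from below by $c(\alpha,\|\bA_\bfeta\|_{\bW^{1,\infty}},\|\bA_\bfeta^{-1}\|_{\bW^{1,\infty}})\,\|\bD\bu\|_{\bL^2(\sO)}^2$ (comparing $\bD^\bfeta(\bu) = \text{sym}(\nabla\bu(\nabla\bA_\bfeta)^{-1})$ with $\bD\bu$ and using a Korn-type inequality on the fixed domain with the no-slip condition on $\Gamma_r$).

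Collecting everything, I obtain $\frac{d}{dt}\big(\tfrac12\int_\sO J_\bfeta|\bu|^2 + \tfrac12\|\partial_t\bfeta\|_{L^2(\Gamma)}^2 + \tfrac12\|\Delta\bfeta\|_{L^2(\Gamma)}^2\big) + c\,\|\bD\bu\|_{\bL^2(\sO)}^2 + \gamma\|\Lambda^{1+s}\partial_t\bfeta\|_{L^2(\Gamma)}^2 \le 0$, and integrating in time, using $\inf J_\bfeta > \alpha$ to control $\|\bu\|_{\bL^2(\sO)}^2 \le \alpha^{-1}\int_\sO J_\bfeta|\bu|^2$, and bounding the initial term $\int_\sO J_{\bfeta_0}|\bu_0|^2 \le \|\bA_\bfeta\|_{\bW^{1,\infty}}^3\|\bu_0\|_{\bL^2(\sO)}^2 \le \|\bA_\bfeta\|_{\bW^{1,\infty}}^3\|\bA_\bfeta^{-1}\|_{\bW^{1,\infty}}^3\|\bu_0\|_{\bL^2(\sO_{\bfeta_0})}^2$ by change of variables, yields the claimed bound $\|\bu\|_{L^\infty(0,T;\bL^2(\sO))\cap L^2(0,T;\sV_F^\bfeta)} < K_1$ with $K_1$ depending only on the stated quantities and the fixed data. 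I expect the main obstacle to be making the energy test rigorous at the stated (low) regularity — in particular justifying the integration by parts in time and the manipulation of the ALE transport terms when $\partial_t J_\bfeta$ and $\bw^\bfeta$ are only as regular as $\partial_t\bfeta$ allows; this is handled by the usual Steklov/time-averaging of $(\bu,\bfeta)$ together with the regularity $\bA_\bfeta \in L^\infty(0,T;\bW^{1,\infty})$ assumed in the statement, but it requires care so that the averaged functions remain admissible test functions (respecting both $\text{div}^\bfeta = 0$ and the kinematic coupling).
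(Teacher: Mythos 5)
Your proposal is correct in spirit, but it takes a genuinely different route from the paper. You re-derive the energy identity from scratch on the fixed domain $\sO$, by testing the transformed weak formulation \eqref{weaksol} with $(\bq,\bfpsi)=(\bu,\partial_t\bfeta)$ (regularized in time), and then carrying out the fixed-domain Reynolds identity, the ALE transport cancellation, and the coercivity of the $J_\bfeta\,|\bD^{\bfeta}\bu|^2$ term. The paper instead treats the lemma as a direct corollary of the already-derived moving-domain energy estimate \eqref{apriori}: since $\bA_\bfeta$ is assumed bijective, one writes $\bu=\tilde\bu\circ\bA_\bfeta$, uses $\sup_t\int_{\sO_\bfeta}|\tilde\bu|^2+\int_0^T\int_{\sO_\bfeta}|\bD(\tilde\bu)|^2\leq C$ from \eqref{apriori}, and simply changes variables: $\alpha\int_\sO|\bu|^2\leq\int_\sO J_\bfeta|\bu|^2=\int_{\sO_\bfeta}|\tilde\bu|^2$ gives the $L^\infty_tL^2_x$ bound, and $\nabla\bu=\nabla^{\bfeta}\bu\cdot\nabla\bA_\bfeta$ together with the uniform Korn constant on $\sO_\bfeta$ from \cite{V12} (depending only on $\sO$ and the $W^{1,\infty}$ norms of $\bA_\bfeta,\bA_\bfeta^{-1}$) gives the $L^2_tH^1_x$ bound. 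What each route buys: the paper's is shorter because all the delicate cancellations happen once, in the formal moving-domain derivation of Section~\ref{energy}, so this lemma becomes pure bookkeeping with Jacobian weights; yours avoids appealing to the moving-domain computation but has to re-establish the same cancellations on $\sO$ with $J_\bfeta$, $\partial_tJ_\bfeta$, and $\bw^\bfeta$ floating around, which is more laborious and harder to justify cleanly at weak regularity. One caution about your Korn step: $\bD^{\bfeta}(\bu)=\mathrm{sym}(\nabla\bu(\nabla\bA_\bfeta)^{-1})$ is \emph{not} pointwise comparable to $\bD(\bu)=\mathrm{sym}(\nabla\bu)$, so ``coercing $\int J_\bfeta|\bD^{\bfeta}\bu|^2$ from below by $\|\bD\bu\|^2$ and then using Korn on $\sO$'' does not work as stated; the correct chain is $\bD^{\bfeta}(\bu)(x)=\bD(\tilde\bu)(\bA_\bfeta(x))$, Korn for $\tilde\bu$ on $\sO_\bfeta$ with the uniform constant of \cite{V12}, and then $\nabla\bu=\nabla^{\bfeta}\bu\cdot\nabla\bA_\bfeta$ to return to $\sO$ --- precisely what the paper does.
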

\begin{proof}[Proof of Lemma \ref{uest}]
	This Lemma is a consequence of the energy estimate \eqref{apriori}. Owing to the assumption that $\bA_\bfeta(t):\sO\mapsto\sO_\bfeta(t)$ is bijective, we can write
	$$\bu = \tilde\bu\circ \bA_\bfeta,$$
	where $(\tilde\bu,\bfeta)$ is a solution to the FSI problem in the sense of Definition \ref{weakform_moving}.
	Then the energy estimate \eqref{apriori} gives us that 
	$$\sup_{t\in[0,T]}\int_{\sO_\bfeta}|\tilde\bu|^2+\int_0^T\int_{\sO_\bfeta}|\bD(\tilde\bu)|^2 \leq C,$$
	where $C$ depends only on the given initial data.

We will use	these bounds to obtain the desired estimates for $\bu$. The first bounds are obtained easily by a change of variables as follows,
\begin{align*}
\alpha \sup_{0\le t\leq T} \int_{\sO}|\bu|^2\leq 	\sup_{0\le t\leq T} \int_{\sO}J_\bfeta|\bu|^2 =	\sup_{0\le t\leq T} \int_{\sO_\bfeta}|\tilde\bu|^2 \leq C. 
\end{align*}

 Next, to bound $\bu$ in $L^2(0,T;\sV_F^{\bfeta})$,
 we must first establish a connection between the gradient and the symmetrized gradient of $\tilde{\bu}$ which is traditionally done with the aid of Korn's inequality. However, due to our setting that involves time-varying fluid domains, we appeal to Lemma 1 in \cite{V12} that gives the existence of a universal Korn constant $K>0$ which depends only on the reference domain $\sO$ and the quantities $\|\bA_\bfeta\|_{L^\infty(0,T;\bW^{1,\infty}(\sO))}, \|(\bA_\bfeta)^{-1}\|_{L^\infty(0,T;\bW^{1,\infty}(\sO))}$. For this constant we have that
	\begin{align*}
		\|\tilde\bu\|_{\bH^1(\sO_\bfeta)} \leq K\|\bD(\tilde\bu)\|_{\bL^2(\sO_\bfeta)}.
	\end{align*}
	These bounds do not immediately translate to desired $L^2_tH^1_x$-bounds for $\bu$. We observe that on the fixed $\sO$ we have the following relation between the gradient and the transformed gradient (via ALE maps) of $\bu$:
	$$\nabla\bu=\nabla^{\bfeta}\bu \cdot\nabla \bA_\bfeta.$$
	Hence, we write, 
	\begin{align*}
		\alpha \int_0^T&\int_{\sO}|\nabla\bu|^2dx \leq  \int_0^T\int_{\sO}J_\bfeta|\nabla\bu|^2dx
		=\int_0^T\int_{\sO}J_\bfeta|\nabla^{\bfeta}\bu\cdot \nabla \bA_\bfeta|^2dx\\
		&\leq \|\bA_\bfeta\|^2_{L^\infty(0,T;\bW^{1,\infty}(\sO))}\int_0^T\int_{\sO}J_\bfeta|\nabla^{\bfeta}\bu|^2dx=\|\bA_\bfeta\|^2_{L^\infty(0,T;\bW^{1,\infty}(\sO))}\int_0^T\int_{\sO_\bfeta}|\nabla^{}\tilde\bu|^2dx\\
		&\leq K\|\bA_\bfeta\|^2_{L^\infty(0,T;\bW^{1,\infty}(\sO))}\int_0^T\int_{\sO_\bfeta}|\bD(\tilde\bu)|^2dx\\
		& \leq K_1(\|\bA_\bfeta\|_{L^\infty(0,T;\bW^{1,\infty}(\sO))}, \|(\bA_\bfeta)^{-1}\|_{L^\infty(0,T;\bW^{1,\infty}(\sO))}).
	\end{align*}
	This completes the proof of Lemma \ref{uest}.
\end{proof}
Now, we proceed with the proof of Theorem \ref{thm:Lip1}. We will assume that the setting of Lemma \ref{uest} holds true. 
The main idea behind the proof of Theorem \ref{thm:Lip1}, namely obtaining estimate \eqref{reg_eta},  is to consider the "transformed" weak formulation \eqref{weakform_steady}
and use for a test function $\bfpsi$ the function $\bfpsi \sim \Lambda^{2\kappa}\bfeta$ for $1-s<\kappa<1$. In fact, to obtain precisely \eqref{reg_eta},  we take 
\begin{equation}\label{kappa}
\kappa = 1-(s-\delta),\qquad\text{for any $0<\delta<s$}.
\end{equation}
Due to the kinematic coupling condition embedded in the test space $\sD^\bfeta(0,T)$, we will also construct a transformed-divergence (div$^\bfeta$)-free extension $\bq$ of $\bfpsi$ to be used as the fluid test function.  In the setting of \cite{MS22}, where tangential interactions between the fluid and the structure are negligible, this extension, in the case of flat reference geometry, is obtained simply by extending the boundary data onto the moving domain by a constant in the direction normal to $\Gamma$ and then composing it with the ALE map.
However, constructing an appropriate extension in our setting is not easy. Firstly, $\Lambda^{2\kappa}\bfeta$ is not guaranteed to have a solenoidal extension in the moving fluid domain $\sO_\bfeta$ and thus special care has to be taken in the construction of $\bfpsi$ to ensure that it possesses an extension $\bq$ in $\sO$ 
which is divergence free in terms of the transformed-divergence operator (div$^\bfeta$). Secondly, $\bq$ and $\bfpsi$, as a pair of test functions for \eqref{weakform_steady}, must satisfy appropriate bounds.

Now, due to its complicated form, instead of looking for a transformed-divergence-free extension $\bq$ of the function $\bfpsi$ directly, we will first find a solenoidal extension of a modification of $\bfpsi$, denoted by $\bfvarphi$, on $\sO$, and then transform this function appropriately to obtain the desired  $\bq$. That is, we will find a function $\bfvarphi$ such that it satisfies div$\bfvarphi=0$ on $\sO$ and then we will transform $\bfvarphi$ into $\bq$ in a way that guarantees that div$^\bfeta\bq=0$. This transformation will be obtained by multiplying $\bfvarphi$ with the inverse of the cofactor matrix of $\bA_\bfeta$ and using the Piola identity (see Theorem 1.7-1 in \cite{C88}) to obtain (see \eqref{divq}):
\begin{align}\label{transformdiv}
	\bq &= J^{-1}_{\bfeta}(\nabla \bA_\bfeta) \bfvarphi.
\end{align}
At this point we only have $\bq$ written in terms of $\bfvarphi$, but we still do not have $\bfvarphi$ defined, and we still do not have $\bfpsi$.
Next, we work on constructing the test function $\bfpsi$ that "behaves" like $\Lambda^{2\kappa}\bfeta$ and satisfies the kinematic coupling condition with $\bq$,
and has the additional property that  its appropriate modification has a divergence-free extension $\bfvarphi$ in $\sO$. 

Naturally, this modification must account for the transformation of $\bfvarphi$ into $\bq$ as given in \eqref{transformdiv}. Hence, we define
\begin{align}
	\bfpsi: = \Lambda^{2\kappa}\bfeta -c\bfxi
\end{align}
where $c\bfxi$ is a correction term that allows us to transform $\bfpsi$ so that its transformation possesses a divergence free extension in $\sO$. More precisely, we let
\begin{align}\label{defc}
	{c}
	= \frac{\int_{\Gamma}\nabla\bfeta \times \Lambda^{2\kappa}\bfeta}{\int_{\Gamma}\nabla\bfeta\times\bfxi},
\end{align}
where 
$\bfxi \in \bC^\infty_0([0,T]\times\Gamma)$ is such that the denominator in the definition of the constant $c$ is non-zero. In fact, we  choose $\bfxi$ such that $\|\bfxi(t)\|_{\bC^2(\Gamma)}=1$ and $\nabla\bfeta\times\bfxi(t) =1$ for every $t\in[0,T]$. Note that for this choice of $\bfxi$ we have
\begin{align*}
	\sup_{0\le t\leq T} |c(t)| \leq 	\sup_{0\le t\leq T} \|\bn_\bfeta(t)\|_{\bL^2(\Gamma)}\|\Lambda^{2\kappa}\bfeta(t)\|_{\bL^2(\Gamma)}.
\end{align*}
Note, due to the periodic boundary conditions imposed on the structure displacement, $\bfpsi$ is indeed a valid structure test function.

Now for the solenoidal function $\bfvarphi$ in \eqref{transformdiv}, we define it to be the solution of a time-dependent Stokes problem with non-homogeneous boundary data defined as follows. 

For any fixed $\delta$ such that $0<\delta<s$,  let $\kappa$ be as defined in \eqref{kappa}, namely $\kappa = 1-(s-\delta)$.
Then we choose the solenoidal function $\bfvarphi$ in \eqref{transformdiv} to be the solution of
\begin{equation}\begin{split}\label{stokes}
		\bfvarphi_t	-\Delta \bfvarphi +\nabla p&= 0 \qquad \text{ in } \sO,\\
		\text{div }\bfvarphi &= 0 \qquad \text{ in } \sO,\\
		\bfvarphi & = J_{\bfeta} (\nabla \bA_\bfeta)^{-1}|_{\Gamma}\left( \Lambda^{2\kappa}\bfeta - c\bfxi\right) \qquad \text{ on } \Gamma,	\\
		\bfvarphi(t=0)&=\bfvarphi_0,
\end{split}	\end{equation}
such that initial condition satisfies,
\begin{align*}
	\bfvarphi_0|_\Gamma &= J_{\bfeta_0} (\nabla A_{\bfeta_0})^{-1}|_{\Gamma}\left( \Lambda^{2\kappa}\bfeta_0 - c\bfxi(0)\right) .
\end{align*}

Indeed, observe that this is the right choice of boundary value since it satisfies the following compatibility condition:
$$\int_{\Gamma}\bfvarphi \cdot (0,0,1) = \int_{\Gamma} 
\nabla\bfeta\times (\Lambda^{2\kappa}\bfeta-c\bfxi)=0.$$
Now, for an appropriate {{choice of the}} trace space $\mathcal{G}^{m}(\Gamma\times(0,T))$,  Theorem 6.1 in \cite{FGH02} guarantees the existence of a unique solution $(\bfvarphi, p)$ to \eqref{stokes} that satisfies
\begin{align}
	\|\bfvarphi\|_{L^2(0,T;\bH^m(\sO))}+\|\partial_t \bfvarphi\|_{L^2(0,T;\bH^{m-2}(\sO))}&+\|\nabla p\|_{L^2(0,T;\bH^{m-2}(\sO))} \notag\\
	&\leq \|J_{\bfeta}(\nabla \bA_\bfeta)^{-1}|_{\Gamma}(\Lambda^{2\kappa}\bfeta-c\bfxi)\|_{\mathcal{G}^{m}(\Gamma\times(0,T))}.\label{reg_stokes}
\end{align}
We will consider $\mathcal{G}^{m}(\Gamma_T)$ with $\frac32 < m < 2$. This choice of $m$ balances the following two considerations: 
the chosen $m$ has to be large enough to bound the time derivative of the test function $\bq$ in an appropriate dual space, which will be discussed later in estimate \eqref{I5} (see the remark following the estimate), while still ensuring that $m$ is not too large in order to capture the limited regularity of the boundary data in \eqref{stokes}.

For any $m > \frac32$ the trace space $\mathcal{G}_m$ is endowed with the following norm,
\begin{align*}
	\|\phi\|_{\mathcal{G}^{m}(\Gamma\times(0,T))} := \|\phi\|_{L^2(0,T;\bH^{m-\frac12}(\Gamma))}+ \|\phi\cdot\bn\|_{ H^1(0,T;\bH^{m-\frac52}(\Gamma))}+\|\phi_{\boldsymbol{\tau}}\|_{H^{\frac{2m-1}{2m}}(0,T;\bH^{(1-\frac2m)(m-\frac12)}(\Gamma))} ,
\end{align*}
where $\bn = (0,0,1)$ is the unit normal to $\Gamma$ and $\phi_{\boldsymbol{\tau}}$ is the projection of $\phi$ onto the tangent space of $\Gamma$. 

Next we comment on the validity of the choice of such test functions $(\bq,\bfpsi)$.
In summary, we have defined
\begin{align}\label{testfunction}
	\bq = -J_{\bfeta}^{-1}\nabla \bA_\bfeta\,\bfvarphi,\qquad \bfpsi = -(\Lambda^{2\kappa}\bfeta - c\bfxi),
\end{align}
where $c$ is given in \eqref{defc} and $\bfvarphi$ solves \eqref{stokes}. As mentioned earlier, due to the properties of the Piola transform (see e.g. Theorem 1.7-1 in \cite{C88}), we have
\begin{align}\label{divq}
	\text{div}^{\bfeta}\bq = J_{\bfeta}^{-1}(\text{div}\bfvarphi)=0.
\end{align}
Moreover, it is also true that $\bq|_\Gamma=\bfpsi$ on $(0,T)\times\Gamma$. Hence, we conclude that this pair $(\bq,\bfpsi)$ is a valid test function for \eqref{weaksol}.

We proceed with the proof of Theorem~\ref{thm:Lip1} by replacing  the test function $\bfpsi$
in the weak formulation \eqref{weaksol} with the above-constructed $\bfpsi=-(\Lambda^{2\kappa}\bfeta - c\bfxi)$,
and then express the terms containing $\bfeta$ that we want to estimate, using the remaining terms from the weak formulation. We obtain:
\begin{equation}\label{testing}
	\begin{split}
		&\frac{\gamma}2\int_0^{T}\int_\Gamma \frac{d}{dt}|\Lambda^{1+\kappa+s}\bfeta|^2+
		\int_0^{T}\int_\Gamma |\Lambda^{2+{\kappa}}\bfeta|^2= \int_0^T c\int_\Gamma |\partial_t\Lambda^{\kappa} {\bfeta}|^2 \\
		&+ \int_0^T\int_\Gamma \partial_t {\bfeta} \,\partial_t{ \bfxi} +\int_0^{T}c\int_\Gamma \Delta\bfeta\cdot\Delta \bfxi-\gamma\int_0^{T }c\int_\Gamma \Lambda^{1+s}\partial_t\bfeta:\Lambda^{1+s}\bfxi \\
		&+\int_0^T\int_\sO{ J}_{\bfeta}\bu\cdot \partial_t{\bq} -\int_0^T\int_{\sO}J_{\bfeta}(\bu\cdot\nabla^{\bfeta }
		\bu \cdot\bq
		- \bw\cdot\nabla^{\bfeta }\bu\cdot\bq )\\
		&+\int_0^T\int_{\sO}
		\partial_tJ_{\bfeta}\bu\cdot\bq 
		-2\nu\int_0^T\int_{\sO}{J}_{\bfeta}\, \bD^{{\bfeta} }(\bu )\cdot \bD^{{\bfeta} }(\bq) \\
		&+\int_{\sO}J_{\bfeta_0}\bu_0\bq(0)   + \int_\Gamma \bv_0\bfpsi(0) ,\\
		&:= I_1+...+I_{11}.
	\end{split}
\end{equation}
In the rest of this proof we will estimate the terms $I_j, 1\leq j\leq 11$ to get the desired final estimate.

However, before estimating each term $I_j, 1\leq j\leq 11$,
we plan to obtain bounds for $\bfvarphi$ that will result in appropriate bounds for the test function $\bq$ (see \eqref{testfunction}),
which will require bounds for $\bfvarphi|_\Gamma=J_{\bfeta} (\nabla \bA_\bfeta)^{-1}|_{\Gamma}\left( \Lambda^{2\kappa}\bfeta - c\bfxi\right)$ in the trace space 
$\mathcal{G}^m(\Gamma\times(0,T))$ for a well-chosen $m>\frac32$.
More precisely, we plan to use \eqref{reg_stokes} to show the following estimate of $\bfvarphi|_\Gamma$:


\begin{proposition}\label{phi_trace_est}
For any $0<\delta < s$ let $m=\frac32+\ep$ where $0<\ep< \min\{\delta/4, s-\delta\}$.       
Then, the function $\bfvarphi$ defined to be the solution of \eqref{stokes}, satisfies the following trace estimate:
\begin{align}\label{varphitrace}
	\|\bfvarphi|_\Gamma\|_{\mathcal{G}^m(\Gamma\times(0,T))  }\leq 	C(1+ \|\bfeta\|^{\frac12}_{L^\infty(0,T;\bH^{2+\delta}(\Gamma))}\|\bfeta\|_{L^2(0,T;\bH^{2+\kappa}(\Gamma))}), 
\end{align}
where $\kappa$ is defined in \eqref{kappa}.
\end{proposition}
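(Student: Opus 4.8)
The plan is to estimate each of the three terms in the norm $\|\cdot\|_{\mathcal{G}^m}$ of the boundary datum $\bfvarphi|_\Gamma = J_\bfeta(\nabla\bA_\bfeta)^{-1}|_\Gamma(\Lambda^{2\kappa}\bfeta - c\bfxi)$ separately, using the regularity of $\bfeta$ provided by the energy estimate \eqref{apriori} (namely $\bfeta\in L^\infty_tH^2_x$, $\partial_t\bfeta\in L^2_tH^{1+s}_x$) together with Theorem~\ref{thm:Lip1}'s conclusion \eqref{reg_ale} that $\bA_\bfeta\in L^\infty_t\bC^{1,\delta}(\bar\sO)$ — so in particular $J_\bfeta$ and $(\nabla\bA_\bfeta)^{-1}$ are bounded in $L^\infty_t\bC^{0,\delta}$, hence are multipliers on the relevant fractional Sobolev spaces of order $\le\delta$. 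We also use the bound $\sup_t|c(t)|\le \sup_t\|\bn_\bfeta(t)\|_{\bL^2}\|\Lambda^{2\kappa}\bfeta(t)\|_{\bL^2}$ established just before the statement, together with $\|\bfxi(t)\|_{\bC^2}=1$, so the $c\bfxi$ contribution is harmless and everything reduces to estimating $J_\bfeta(\nabla\bA_\bfeta)^{-1}|_\Gamma\,\Lambda^{2\kappa}\bfeta$.

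First I would handle the leading term $\|\bfvarphi|_\Gamma\|_{L^2_tH^{m-1/2}_x(\Gamma)}$: since $m-\tfrac12 = 1+\ep < 1+\kappa$ and $\Lambda^{2\kappa}\bfeta\in L^2_tH^{2-\kappa}_x$ would be more than enough, but in fact we only know $\bfeta\in L^2_tH^{2+\kappa}_x$ pointwise in the combined bound on the right-hand side of \eqref{varphitrace}; so I would interpolate: $\Lambda^{2\kappa}\bfeta\in H^{2+\kappa-2\kappa}_x = H^{2-\kappa}_x$ whenever $\bfeta\in H^2_x$, and a short interpolation between $L^\infty_tH^{2+\delta}_x$ and the $H^{2+\kappa}$-control gives the stated product form $\|\bfeta\|^{1/2}_{L^\infty_tH^{2+\delta}}\|\bfeta\|_{L^2_tH^{2+\kappa}}$ after multiplying by the $\bC^{0,\delta}$-bounded factor $J_\bfeta(\nabla\bA_\bfeta)^{-1}$ (here one checks $m-\tfrac12\le 2+\delta$ so the product rule for multiplication by $\bC^{0,\delta}$ functions applies, using $\ep<\delta/4$). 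Second, the normal component term $\|\bfvarphi\cdot\bn\|_{H^1_t H^{m-5/2}_x(\Gamma)}$: here $m-\tfrac52 = \ep-1<0$, so this is a negative-order space in $x$, and using the compatibility computation $\bfvarphi\cdot\bn|_\Gamma = \nabla\bfeta\times(\Lambda^{2\kappa}\bfeta - c\bfxi)$ one sees the time derivative costs one $\partial_t\bfeta$ (controlled in $L^2_tH^{1+s}_x$) paired against $\Lambda^{2\kappa}\bfeta$ or $\nabla\bfeta$ in a dual space — the negative spatial order absorbs the loss, and $\partial_t c$ is controlled since $\partial_t(\Lambda^{2\kappa}\bfeta)\in L^2_tH^{1+s-2\kappa}_x$. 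Third, the tangential term in $H^{(2m-1)/(2m)}_t H^{(1-2/m)(m-1/2)}_x$: with $m=\tfrac32+\ep$ the spatial order $(1-2/m)(m-\tfrac12)$ is small (close to $\tfrac13$) and the temporal order is below $1$, so an interpolation between $L^2_tH^{1+\kappa}_x$-type control on $\Lambda^{2\kappa}\bfeta$ and $H^1_t$-ish control via $\partial_t\bfeta$ suffices; this is where the constraint $m<2$ matters, keeping the temporal exponent strictly below $1$ so that only $\partial_t\bfeta\in L^2_tH^{1+s}_x$ (not a second time derivative) is needed.

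The main obstacle will be the tangential-trace term and its fractional-in-time norm: one must carefully track that the map $\bfeta\mapsto J_\bfeta(\nabla\bA_\bfeta)^{-1}|_\Gamma\,\Lambda^{2\kappa}\bfeta$ is bounded from the energy class into $H^{(2m-1)/(2m)}_t H^{(1-2/m)(m-1/2)}_x$, which requires a product/commutator estimate for multiplication by the merely $\bC^{0,\delta}$-in-space (and only $L^\infty$-in-time, via \eqref{reg_ale}) coefficient $J_\bfeta(\nabla\bA_\bfeta)^{-1}$ against the fractionally-differentiated displacement — and the time regularity of that coefficient has to be squeezed out of $\partial_t\bA_\bfeta = \bw^\bfeta$, i.e. from the harmonic extension of $\partial_t\bfeta\in L^2_tH^{1+s}_x$, via \eqref{ale_Hs}. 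I would organize the proof as: (i) record the $L^\infty_t\bC^{0,\delta}$ and $L^2_t\bH^{1/2+s}$ bounds on $J_\bfeta(\nabla\bA_\bfeta)^{-1}|_\Gamma$ from \eqref{boundsA1}, \eqref{boundJ}, \eqref{reg_ale} and \eqref{ale_Hs}; (ii) bound $c$ and $\partial_t c$; (iii) estimate the three norm-pieces of $\mathcal{G}^m$ in turn as above; (iv) collect, using $0<\ep<\min\{\delta/4,\,s-\delta\}$ to verify every exponent inequality invoked, and conclude \eqref{varphitrace}.
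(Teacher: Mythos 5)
There are two genuine problems with your proposed approach, both concentrated in step (i) of your outline. First, invoking the conclusion \eqref{reg_ale} of Theorem~\ref{thm:Lip1} is circular: Proposition~\ref{phi_trace_est} is proved \emph{inside} the bootstrap argument that establishes Theorem~\ref{thm:Lip1}, so at this stage you do not yet have $\bA_\bfeta\in L^\infty_t\bC^{1,\delta}(\bar\sO)$ at your disposal. This is precisely why the statement \eqref{varphitrace} keeps the factor $\|\bfeta\|^{1/2}_{L^\infty_tH^{2+\delta}}\|\bfeta\|_{L^2_tH^{2+\kappa}}$ explicit on the right-hand side: those norms are the unknowns being controlled, and they get absorbed by Young's inequality only later, in \eqref{reg1}. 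You must track them, not assume them.

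Second, even setting circularity aside, the multiplier mechanism you rely on does not work at the required regularity. A function in $\bC^{0,\delta}(\Gamma)$ is a bounded multiplier on $H^r(\Gamma)$ only for $|r|<\delta$, whereas the leading trace piece lives in $\bH^{m-1/2}(\Gamma)=\bH^{1+\ep}(\Gamma)$, which has order $1+\ep>1>\delta$. So "$J_\bfeta(\nabla\bA_\bfeta)^{-1}$ is bounded in $L^\infty_t\bC^{0,\delta}$, hence a multiplier" fails exactly where it is needed. The paper's route is different: since $\dim\Gamma=2$, the space $\bH^{1+\ep}(\Gamma)$ is a Banach algebra, and one bounds $J_\bfeta(\nabla\bA_\bfeta)^{-1}|_\Gamma$ in $L^\infty_t\bH^{1+\ep}(\Gamma)$ via the trace theorem and the harmonic-extension estimate \eqref{ale_Hs} (passing through the Banach algebra $H^{3/2+\ep}(\sO)$), then uses the interpolation $\|\bfeta\|^2_{\bH^{2+\ep}}\lesssim\|\bfeta\|^{3/2}_{\bH^2}\|\bfeta\|^{1/2}_{\bH^{2+4\ep}}$ — this is where the constraint $\ep<\delta/4$ enters. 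For the $H^1_t\bH^{m-5/2}_x$ and mixed tangential pieces, the paper bounds the full trace $\bfvarphi|_\Gamma$ (not just $\bfvarphi\cdot\bn$) and obtains the fractional-in-time norm by interpolating between \eqref{trace1} and \eqref{trace2}, avoiding the commutator estimate you anticipate needing; the time regularity of the coefficient is handled through the product rule of Theorem~8.2 in \cite{BH21} applied with the coefficient in $H^1_t\bH^{\ep}_x$, controlled by $\|\partial_t\bfeta\|_{L^2_t\bH^{1+s}}$. Your outline of the normal-component term via the compatibility identity $\bfvarphi\cdot\bn=\nabla\bfeta\times(\Lambda^{2\kappa}\bfeta-c\bfxi)$ is a legitimate alternative decomposition, but it is not needed once the stronger full-trace estimate is in hand.
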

\begin{proof}(Proof of Proposition~\ref{phi_trace_est})
First, we write $m=\frac32+\ep$.
Now, observe that for any $0<\ep<1-\kappa=s-\delta$, the following estimate holds true:
\begin{align*}
	\|\Lambda^{2\kappa}\bfeta\|_{L^2(0,T;\bH^{m-\frac12}(\Gamma))}=\|\Lambda^{2\kappa}\bfeta\|_{L^2(0,T;\bH^{1+\ep}(\Gamma))} = \|\bfeta\|_{L^2(0,T;\bH^{2\kappa+1+\ep}(\Gamma))} \leq \|\bfeta\|_{L^2(0,T;\bH^{2+\kappa}(\Gamma))}.
\end{align*}
Due to the trace theorem, the fact that $H^{\frac32+\ep}(\sO)$ is a Banach algebra, and using the Sobolev estimate for harmonic extensions \eqref{ale_Hs}, we have
\begin{align*}
	\|J_\bfeta(\nabla \bA_\bfeta)^{-1}|_\Gamma\|_{\bH^{1+\ep}(\Gamma)}\leq C\|J_\bfeta(\nabla \bA_\bfeta)^{-1}\|_{\bH^{\frac32+\ep}(\sO)} \leq C\|\bA_\bfeta\|^2_{\bH^{\frac52+\ep}(\sO)}\leq C\|\bfeta\|^2_{\bH^{2+\ep}(\Gamma)}.
\end{align*}
We now interpolate the right-hand side as follows,
\begin{align}
	\|J_\bfeta(\nabla \bA_\bfeta)^{-1}|_\Gamma\|_{L^\infty(0,T;\bH^{1+\ep}(\Gamma))} &	\leq C\|\bfeta\|^2_{L^\infty(0,T;\bH^{2+\ep}(\Gamma))}\notag\\
	&\leq C\|\bfeta\|^{\frac32}_{L^\infty(0,T;\bH^{2}(\Gamma))}\|\bfeta\|^{\frac12}_{L^\infty(0,T;\bH^{2+4\ep}(\Gamma))}\notag,\\
	&\leq C\|\bfeta\|^{\frac12}_{L^\infty(0,T;\bH^{2+4\ep}(\Gamma))}. \label{JA}
\end{align}
Hence, choosing 
\begin{equation}\label{epsilon}
0<\ep < \min\{\frac{\delta}4,1-\kappa\}
\end{equation} 
and noting that for $s>1$, $H^s(\Gamma)$ is a Banach algebra, we obtain
\begin{align}
	\notag\|\bfvarphi|_\Gamma\|_{L^2(0,T;\bH^{m-\frac12}(\Gamma))} &\leq C
	\|J_\bfeta(\nabla \bA_\bfeta)^{-1}\Lambda^{2\kappa}\bfeta|_\Gamma\|_{L^2(0,T;\bH^{1+\ep}(\Gamma))}\\
	\notag	 &\leq C\|J_\bfeta(\nabla \bA_\bfeta)^{-1}|_\Gamma\|_{L^\infty(0,T;\bH^{1+\ep}(\Gamma))}\|\Lambda^{2\kappa}\bfeta\|_{L^2(0,T;\bH^{1+\ep}(\Gamma))}\\
	\notag	&\leq C\|\bfeta\|^{\frac12}_{L^\infty(0,T;\bH^{2+4\ep}(\Gamma))}\|\bfeta\|_{L^2(0,T;\bH^{2+\kappa}(\Gamma))}\\
	&\leq C \|\bfeta\|^{\frac12}_{L^\infty(0,T;\bH^{2+\delta}(\Gamma))}\|\bfeta\|_{L^2(0,T;\bH^{2+\kappa}(\Gamma))}.\label{trace1}
\end{align}
This gives an estimate on the first term in the definition of $\mathcal{G}_m(\Gamma\times(0,T))$.
Now we focus on the second term. 
We observe that
\begin{align*}
	\|\Lambda^{2\kappa}\bfeta\|_{ H^1(0,T;\bH^{m-\frac52}(\Gamma))} \leq 	C\|\bfeta\|_{ H^1(0,T;\bH^{-1+\ep+2\kappa}(\Gamma))} \leq C\|\bfeta\|_{ H^1(0,T;\bH^{1+s}(\Gamma))}<C.
\end{align*}
Hence, by combining this observation with \eqref{JA} and by applying Theorem 8.2 in \cite{BH21}, we find
\begin{align*}
	\|\partial_t(J_\bfeta\nabla \bA_\bfeta^{-1}\Lambda^{2\kappa}\bfeta|_\Gamma)\|_{L^2(0,T;\bH^{m-\frac52}(\Gamma))} &\leq \|(J_\bfeta\nabla A_\bfeta^{-1})|_\Gamma\|_{L^\infty(0,T;\bH^{1+\ep}(\Gamma))}\|\Lambda^{2\kappa}\bfeta\|_{ H^1(0,T;\bH^{m-\frac52}(\Gamma))}\\
	& +  \|(J_\bfeta\nabla A_\bfeta^{-1})|_{\Gamma}\|_{H^1(0,T;\bH^{\ep}(\Gamma))}\|\Lambda^{2\kappa}\bfeta\|_{ L^\infty(0,T;\bH^{m-\frac32}(\Gamma))}\\
	& \leq  C\|\bfeta\|^{\frac12}_{L^\infty(0,T;\bH^{2+4\ep}(\Gamma))}\|\bfeta\|_{H^1(0,T;\bH^{1+s}(\Gamma))}\\
	&+C\|\partial_t\bfeta\|_{L^2(0,T;\bH^{1+s}(\Gamma))}^2\|\bfeta\|_{L^\infty(0,T;\bH^{\ep+2\kappa}(\Gamma))}.
\end{align*}
Since we took $\ep< \min\{\frac{\delta}4,1-\kappa\}$ and $\kappa<1$, we see that $\|\bfeta\|_{L^\infty(0,T;\bH^{\ep+2\kappa}(\Gamma))} \leq \|\bfeta\|_{L^\infty(0,T;\bH^{2}(\Gamma))} \leq C$. Hence we conclude that,
\begin{align}
	\|\bfvarphi|_\Gamma\|_{H^1(0,T;\bH^{m-\frac52}(\Gamma))} 	& \leq  C+C\|\bfeta\|^{\frac12}_{L^\infty(0,T;\bH^{2+\delta}(\Gamma))}.\label{trace2}
\end{align}
Finally, interpolating between the spaces in \eqref{trace1} and \eqref{trace2}, we obtain 
\begin{align}
	\|\bfvarphi|_\Gamma\|_{ H^{\frac{2m-1}{2m}}(0,T;\bH^{(1-\frac2m)(m-\frac12)}(\Gamma))}& \leq \|\bfvarphi|_\Gamma\|^{\frac{2m-1}{2m}}_{H^1(0,T;\bH^{m-\frac52}(\Gamma))} \|\bfvarphi|_\Gamma\|^{\frac1{2m}}_{L^2(0,T;\bH^{m-\frac12}(\Gamma))}\notag\\
	&\leq C+C \|\bfeta\|^{\frac12}_{L^\infty(0,T;\bH^{2+\delta}(\Gamma))}\|\bfeta\|_{L^2(0,T;\bH^{2+\kappa}(\Gamma))}.\label{trace3}
\end{align}

Finally, by combining all the estimates \eqref{trace1}, \eqref{trace2} and \eqref{trace3}, we arrive at the desired result \eqref{varphitrace}.
This completes the proof of Proposition~\ref{phi_trace_est}.
\end{proof}

This proposition implies that for $m=\frac32+\ep$ where $0<\ep<s-\delta \ll 1$, we can continue estimating the right hand-side of \eqref{reg_stokes} to obtain
\begin{align}\label{varphi1}
	\|\bfvarphi\|_{L^2(0,T;\bH^m(\sO))}+\|\partial_t \bfvarphi\|_{L^2(0,T;\bH^{m-2}(\sO))} \leq  C(1+ \|\bfeta\|^{\frac12}_{L^\infty(0,T;\bH^{2+\delta}(\Gamma))}\|\bfeta\|_{L^2(0,T;\bH^{2+\kappa}(\Gamma))}).
\end{align}

Moreover, thanks to the Lions-Magenes theorem, \eqref{varphi1} further gives us
\begin{align}\label{varphi2}
	\|\bfvarphi\|_{C(0,T;\bH^{\frac12+\ep}(\sO))}= 	\|\bfvarphi\|_{C(0,T;\bH^{m-1}(\sO))} \leq C(1+ \|\bfeta\|^{\frac12}_{L^\infty(0,T;\bH^{2+\delta}(\Gamma))}\|\bfeta\|_{L^2(0,T;\bH^{2+\kappa}(\Gamma))}),
\end{align}
which will ultimately be used in deriving bounds for the nonlinear term in the Navier-Stokes equations.

Now we turn our attention to deriving the relevant estimates for $\bq$. We will do so by using the relation \eqref{testfunction} and the estimates \eqref{varphi1} and \eqref{varphi2}.\\
First, since we have the embedding $H^\frac12(\sO) \hookrightarrow L^3(\sO)$, estimate \eqref{varphi1} gives us,
\begin{equation}
	\begin{split}\label{q1}
		\|\bq\|_{L^2(0,T;\bH^1(\sO))} &\leq 
		C\|\bA_\bfeta\|_{L^\infty(0,T;\bH^{2.5}(\sO))}\|\bfvarphi\|_{L^2(0,T;\bH^{\frac32}(\sO))}\\
		&\leq C\|\bfeta\|_{L^\infty(0,T;\bH^{2}(\Gamma))}(1+ \|\bfeta\|^{\frac12}_{L^\infty(0,T;\bH^{2+\delta}(\Gamma))}\|\bfeta\|_{L^2(0,T;\bH^{2+\kappa}(\Gamma))})\\
		&\leq  C\|\bfeta\|^{\frac12}_{L^\infty(0,T;\bH^{2+\delta}(\Gamma))}\|\bfeta\|_{L^2(0,T;\bH^{2+\kappa}(\Gamma))}.
	\end{split}
\end{equation}
However, to deal with the nonlinear term in the Navier-Stokes equations, this estimate will not be sufficient. Thus, using \eqref{varphi2} we arrive at the following estimate which is later used to find bounds for the terms $I_7$ and $I_8$:
\begin{equation}\label{q2}
	\begin{split}
		\|\bq\|_{L^\infty(0,T;\bH^{\frac12}(\sO))}& \leq C\|\bA_\bfeta\|_{L^\infty(0,T;\bH^{2.5}(\sO))}\|\bfvarphi\|_{L^\infty(0,T;\bH^{m-1}(\sO))}\\
		& \leq  C\|\bfeta\|_{L^\infty(0,T;\bH^{2}(\Gamma))}(1+ \|\bfeta\|^{\frac12}_{L^\infty(0,T;\bH^{2+\delta}(\Gamma))}\|\bfeta\|_{L^2(0,T;\bH^{2+\kappa}(\Gamma))})\\
		& \leq  C\|\bfeta\|^{\frac12}_{L^\infty(0,T;\bH^{2+\delta}(\Gamma))}\|\bfeta\|_{L^2(0,T;\bH^{2+\kappa}(\Gamma))}.
	\end{split}
\end{equation}
Finally, we expand:
\begin{align}\label{dtq}
	\partial_t\bq &= \partial_tJ_\bfeta\nabla \bA_\bfeta^{-1}\bfvarphi
	+ J_\bfeta\partial_t\nabla \bA_\bfeta^{-1}\bfvarphi
	+J_\bfeta\nabla \bA_\bfeta^{-1}\partial_t\bfvarphi.
\end{align}
We know that (see e.g. \cite{C88}), $$\partial_t J_{\bfeta} = -(J_{\bfeta} )^{-2}\text{tr}((\text{cof } \bA_\bfeta)^T \partial_t\nabla  \bA_\bfeta),$$ and thus handling of the first two terms on the right-hand side of \eqref{dtq} is straight-forward, as these terms remain bounded in $L^2(0,T;\bL^2(\sO))$. For the third term on the right-hand side of \eqref{dtq} we apply Theorem 8.1 in \cite{BH21}. Using \eqref{varphi1}, we observe that for 
$$m=\frac32+\ep, \ \ep<\frac{\delta}8, \ {\rm and\ any} \ q>\frac32$$
the following bound on $\bq$ holds true:
\begin{align}
	\|\partial_t\bq\|_{L^2(0,T;\bH^{m-2}(\sO))}&\leq C\|J_\bfeta\nabla \bA_\bfeta^{-1}\|_{L^\infty(0,T;\bH^{q}(\sO))}\|\partial_t\bfvarphi\|_{L^2(0,T;\bH^{m-2}(\sO))}\notag\\
	&\leq C\|\bfeta\|^2_{L^\infty(0,T;\bH^{2+\ep}(\Gamma))}\|\bfeta\|^{\frac12}_{L^\infty(0,T;\bH^{2+\delta}(\Gamma))}\|\bfeta\|_{L^2(0,T;\bH^{2+\kappa}(\Gamma))}\notag\\
	&\leq C\|\bfeta\|^\frac74_{L^\infty(0,T;\bH^{2}(\Gamma))}\|\bfeta\|^\frac14_{L^\infty(0,T;\bH^{2+8\ep}(\Gamma))}\|\bfeta\|^{\frac12}_{L^\infty(0,T;\bH^{2+\delta}(\Gamma))}\|\bfeta\|_{L^2(0,T;\bH^{2+\kappa}(\Gamma))}\notag\\
	& \leq C\|\bfeta\|^{\frac34}_{L^\infty(0,T;\bH^{2+\delta}(\Gamma))}\|\bfeta\|_{L^2(0,T;\bH^{2+\kappa}(\Gamma))}.
	\label{qt}
\end{align}
This completes the derivation of estimates for $\bq$ and $\bfpsi$ that we will use to bound the integrals $I_j$, $1\leq j\leq 11$ in \eqref{testing}.

We start discussing the bounds of integrals $I_j$, $1\leq j\leq 11$ in \eqref{testing} by noticing 
that since $\bfxi$ is smooth, the bounds for the first 4 terms on the right hand side follow straight from the energy estimates derived in \eqref{apriori}. That is,
\begin{align*}
	|I_1+...+I_4| \leq C,
\end{align*}
where $C>0$ depends only on the given data 
	$\bu_0,\bv_0,\bfeta_0$. Note that, this constant technically also depends on the norms $\|\partial_t\bfxi\|_{L^2(0,;\bL^2(\sO))}$ and $\|\bfxi\|_{L^\infty(0,T;\bC^2(\Gamma))}$ which, according to our choice, are equal to 1.

Next, we present the derivation of the estimates that require further explanation.
We begin with $I_5$. To estimate $I_5$  we will use \eqref{qt}.  
Since $H^r_0(\sO)=H^r(\sO)$ for any $r<\frac12$, we obtain the following estimate which holds for $\frac32<m<2$:
\begin{align}\label{I5}
	|I_5| &= |\int_0^T	\int_{\sO}J_\bfeta\bu\cdot\partial_t\bq| \leq \int_0^T\|J_\bfeta\bu\|_{\bH^{2-m}(\sO)} \|\partial_t\bq\|_{\bH^{m-2}(\sO)}\\
	\nonumber
	& \leq C\int_0^T\|J_\bfeta\|_{\bH^{\frac32}(\sO)} \|\bu\|_{\bH^{1}(\sO)}\|\partial_t\bq\|_{\bH^{m-2}(\sO)}\\ 
	\nonumber
	& \leq C\|\bA_\bfeta\|_{L^\infty(0,T;\bH^{\frac52}(\sO))} \|\bu\|_{L^2(0,T;\bH^{1}(\sO))}\|\partial_t\bq\|_{L^2(0,T;\bH^{m-2}(\sO))}\\ 
	\nonumber
	&\leq C\|\bfeta\|_{L^\infty(0,T;\bH^{2}(\Gamma))}\|\bu\|_{L^2(0,T;\bH^1(\sO))}\|\bfeta\|_{L^\infty(0,T;\bH^{2+\delta}(\Gamma))}^{\frac34}\|\bfeta\|_{L^2(0,T;\bH^{2+\kappa}(\Gamma))}\\
	\nonumber
	&\leq CK^8_1 + \frac1{16}\|\bfeta\|_{L^\infty(0,T;\bH^{2+\delta}(\Gamma))}^2 + \frac18\|\bfeta\|_{L^2(0,T;\bH^{2+\kappa}(\Gamma))}^2,
\end{align}
where $K_1$ is the constant from Lemma \ref{uest} that depends on $\alpha_1$, $\|\bA_\bfeta\|_{L^\infty(0,T;\bW^{1,\infty}(\sO))}$ and $\|(\bA_\bfeta)^{-1}\|_{L^\infty(0,T;\bW^{1,\infty}(\sO))}$. 

\begin{remark}
Note that the choice $\frac32<m<2$ plays an important role here as we use the duality between $H^r$ and $H^{-r}$ for $r<\frac12$ on the right hand-side in the first line of the estimate of $I_5$ above. 
\end{remark}

To estimate the nonlinear terms in $I_6$ we use \eqref{q2}, to obtain
\begin{align*}
	|I_6| &\leq C\int_0^T(\|\bu+\bw\|_{\bL^6(\sO)})\|\bu\|_{\bH^1(\sO)}\|\bq\|_{\bL^3(\sO)}\leq C\int_0^T
	\|\bu\|^2_{\bH^1(\sO)}\|\bq\|_{\bH^\frac12(\sO)} \\
	&\leq C
	\|\bu\|^2_{L^2(0,T;\bH^1(\sO))}\|\bq\|_{L^\infty(0,T;\bH^\frac12(\sO))} \\
	&\leq CK_1^8 + \frac1{16}\|\bfeta\|_{L^\infty(0,T;\bH^{2+\delta}(\Gamma))}^2 + \frac18\|\bfeta\|_{L^2(0,T;\bH^{2+\kappa}(\Gamma))}^2.
\end{align*}

Similarly,  using  \eqref{q2} we estimate $I_7$:
\begin{align*}
	|I_7|=	|\int_0^t\int_{\sO} \partial_tJ_\bfeta \bu\cdot\bq| &\leq \|\bw\|_{L^2(0,T;\bH^1(\sO))} \|\bu\|_{L^2(0,T;\bL^6(\sO))}\|\bq\|_{L^\infty(0,T;\bL^3(\sO))}\\
	&\leq CK^4_1 + \frac1{16}\|\bfeta\|_{L^\infty(0,T;\bH^{2+\delta}(\Gamma))}^2 + \frac18\|\bfeta\|_{L^2(0,T;\bH^{2+\kappa}(\Gamma))}^2.
\end{align*}

The symmetrized gradient integral $I_8$ is estimated using \eqref{q1} to obtain
\begin{align*}
	|I_8| &\leq \|\bu\|_{L^2(0,T;\bH^1(\sO))}\|\bq\|_{L^2(0,T;\bH^1(\sO))} \\
	&\leq CK_1^4 + \frac1{16}\|\bfeta\|_{L^\infty(0,T;\bH^{2+\delta}(\Gamma))}^2 + \frac18\|\bfeta\|_{L^2(0,T;\bH^{2+\kappa}(\Gamma))}^2.
\end{align*}
Hence, absorbing appropriate terms on the left hand side we obtain
\begin{align}\label{reg1}
	\frac14\|\bfeta\|^{2}_{L^\infty(0,T;\bH^{2+\delta}(\Gamma))}+\frac12\|\bfeta\|^2_{L^2(0,T;\bH^{2+\kappa}(\Gamma))} \leq K_2 + \|\bfeta_0\|^2_{\bH^{2+\delta}(\Gamma)},
\end{align}
where the constant $K_2$ depends on  $\|\bA_\bfeta\|_{L^\infty(0,T;\bW^{1,\infty}(\sO))}, \|(\bA_\bfeta)^{-1}\|_{L^\infty(0,T;\bW^{1,\infty}(\sO))}$ and $\alpha$ due to its dependence on $K_1$ from Lemma \ref{uest}. Recall here that for any $0<\delta<s$, we chose $\kappa=1-(s-\delta)$. 

{\bf Bootstrap argument:} We will next prove the estimate \eqref{reg_ale} i.e.  we will get rid of the dependence of $K_2$, appearing in the right-hand side of \eqref{reg1}, on the norm $\|\bA_\bfeta\|_{L^\infty(0,T;\bW^{1,\infty}(\sO))}$ and on $\alpha<\inf_{\sO\times(0,T)} J_\bfeta$. We do this by possibly shrinking the time length on which this desired estimate holds by using a bootstrap argument \cite[Propostion 1.21]{TaoDisspersive}. 
Observe that, if for a fixed $\alpha$ and some $C_0>0$ we have
\begin{align}\label{hypo}
	\|\bA_\bfeta\|_{L^\infty(0,T;\bW^{1,\infty}(\sO))} \leq 2C_0\quad\text{and}\quad \inf_{\sO \times(0,T)}J_{\bfeta}>\alpha,
\end{align}
then according to \eqref{reg1} there exists a constant $K_2>0$ depending on $C_0$ such that,
\begin{align*}
	\|\bfeta\|_{L^\infty(0,T;\bH^{2+\delta}(\Gamma))} \leq K_2.
\end{align*}
Furthermore, Sobolev and interpolation inequalities imply for any $\delta>0$ that
\begin{align}
	\|\bA_\bfeta-\bA_{\bfeta_0}\|_{L^\infty(0,T;\bW^{1,\infty}(\sO))} &\leq C \|\bA_\bfeta-\bA_{\bfeta_0}\|_{L^\infty(0,T;\bH^{\frac{5+\delta}2}(\sO))} \leq 	C\|\bfeta-\bfeta_0\|_{L^\infty(0,T;\bH^{2+\frac{\delta}{2}}(\Gamma))}\notag\\
	&\leq C\|\bfeta-\bfeta_0\|^{\frac{2+\frac{\delta}{2}}{2+\delta}}_{L^\infty(0,T;\bH^{2+\delta}(\Gamma))}\|\bfeta-\bfeta_0\|^{\frac{\frac{\delta}{2}}{2+\delta}}_{L^\infty(0,T;\bL^{2}(\Gamma))}\notag\\
	&\leq C(K_2)^{\frac{2+\frac{\delta}{2}}{2+\delta}} T^{\frac{\delta}{2(2+\delta)}}\| \partial_t\bfeta\|_{L^\infty(0,T;\bL^{2}(\Gamma))}^{\frac{\delta}{2(2+\delta)}}	\notag\\
	&\leq  C(K_2)^{\beta} C_1^{1-\beta} T^{1-\beta}, \qquad \text{where }\beta=\frac{2+\frac{\delta}{2}}{2+\delta},\label{smalltime}
\end{align}
where $C$ depends only on $\sO$, $K_2$ depends on $C_0$, and the constant $C_1$ appearing in \eqref{apriori} depends on the given data $\bu_0,\bv_0,\bfeta_0$. 

Similarly, 
\begin{align}
	\inf_{\sO\times(0,T)}  J_{\bfeta}(t)&\geq \inf_{\sO} J_0-\sup_{\sO\times(0,T)}| J_{\bfeta}-J_{\bfeta_0}|\notag\\
	&\geq \inf_{\sO}J_0-C\|\bA_\bfeta-\bA_{\bfeta_0}\|_{L^\infty(0,T;\bW^{1,\infty}(\sO))}\notag\\
	\label{JacBound}
	&\geq\inf_{\sO} J_0- C(K_2)^{\beta} C_1^{1-\beta} T^{1-\beta}.
\end{align}
Hence, for small enough $T_0
>0$, the hypothesis \eqref{hypo} then implies that
\begin{align}\label{conc}
	\|\bA_\bfeta\|_{L^\infty(0,T_0;\bW^{1,\infty}(\sO))} \leq C_0 \quad \text{ and} \quad \inf_{\sO \times(0,T_0)}J_{\bfeta}>2\alpha.
\end{align}

This concludes our bootstrap argument and thus the proof of Theorem \ref{thm:Lip1}.

Next ,we focus on the proof of Theorem \ref{thm:Lip2} to establish the estimate \eqref{reg_temp}.

\subsection{The temporal regularity result for fluid and structure velocities.}\label{subsec:temp}
In this section we prove Theorem \ref{thm:Lip2}.
Namely,  the aim of this section is to show that for $(\bu,\bfeta)$, a pair of smooth functions that solves \eqref{weaksol}, there exists a constant $C>0$, depending only on $\sO$ and the given data, such that the following estimate holds for any $h>0$:
\begin{align}\label{transl}
	\|\tau_h\bu-\bu\|_{L^2(h,T_0;\bL^2(\sO))} + \|\tau_h\partial_t\bfeta-\partial_t\bfeta\|_{L^2(h,T_0;\bL^2(\Gamma))} \leq Ch^{\frac18},
\end{align}
where $T_0$ is the time length appearing in Theorem \ref{thm:Lip1}.

To obtain the two terms on the left-hand side of the estimate above, we will construct an appropriate pair of test functions $(\bq,\bfpsi)$ for the weak formulation  \eqref{weaksol} on the fixed domain $\sO$. A typical approach to obtaining results of this kind for the weak solutions of Navier-Stokes equations posed on a fixed domain, is to use the time integral $\int_{t-h}^t$ of the solution as a test function. This approach cannot directly be employed in the case of moving boundary problems since the fluid velocity at different times is defined on different domains. Thus we face issues, due to the motion of the fluid domain, that arise due to the incompressibility condition and the kinematic coupling condition. Our plan, in the spirit of \cite{G08},
is to construct the desired test function by first modifying the solution $(\bu,\bv)$ appropriately and then integrating it from $t-h$ to $t$, for any $t\in[0,T]$ and $h>0$. This modification must preserve the divergence of the fluid velocity and its boundary behavior (i.e. the kinematic coupling condition) which is not trivial to find due to the time-varying domain.
Note also that, due to the mismatch between the spatial regularity of the structure velocity $\partial_t\bfeta$ and that of the test function $\bfpsi$ in the weak formulation Definition \ref{weakform_steady}, this modification must also include a construction of a spatially regularized version of the structure velocity so that its time integral can be used as a test function in \eqref{weaksol}. 

For the construction of the fluid test function, we will first extend the structure velocity $\bv:=\partial_t\bfeta$ in the steady fluid domain $\sO$, subtract it from $\bu$ and then integrate the resulting function from $t-h$ to $h$ against an appropriate kernel that possesses the desired property of preserving divergence in $\sO$ and flux across $\partial\sO$. Now, to balance out this extra term in the fluid test function, i.e. the extension of $\bv$, and to construct the test function $\bfpsi$ that has the desired spatial $H^2$-regularity, we add the extension of the structure velocity $\bv$ projected on a finite dimensional subspace of $\bH^2$ to the fluid test function. This finite-dimensional projection also enjoys nice properties that result in the second term appearing on the left-hand side of \eqref{reg_temp}.

Finally, due to the addition of these two extra terms (i.e. the extension of $\bv$ and that of its finite dimensional truncation) the transformed-divergence of the fluid test function has to be corrected. For that purpose, we construct a Bogovski-type operator on the physical moving domain with the aid of the Bogovski operator on {\it the fixed domain $\sO$}. This is crucial. Since, on the fixed domain, the test functions are required to satisfy the transformed-divergence-free condition, we correct it by multiplying it with the inverse of the cofactor matrix of $\bA_\bfeta$ and by using the Piola identity.
We will now give precise definitions of our construction.

We fix $h>0$. Let $P_M$ denote the orthonormal projector in $L^2(\Gamma)$ onto the space $\text{span}_{1\leq i\leq M}\{\varphi_i\}$, where $\varphi_i$ satisfies $-\Delta \varphi_i=\lambda_i\varphi_i$ and $\varphi_i=0$ on $\partial\Gamma$. For any $\bv\in\bL^2(\Gamma)$ we use the notation $\bv_M=P_M\bv$ (i.e. subscript $M$) where $P_M$ is the projection onto span$_{1\leq j\leq M}\{\varphi_j\}$. We know that $\lambda_M \sim M^2$ and hence we will choose
\begin{align*}
	\lambda_M = ch^{-\frac34}.
\end{align*}

We now construct a simple extension of $\bv:=\partial_t\bfeta$ in the fluid domain. Let $\bw = \bv \chi$ where $\chi$ is a cut-off function applied to $\bv$ so that it does not have any contributions at the boundary $\partial\sO$ i.e. $\chi$ is a function smooth in $\sO$ such that $\chi(x,y,1)= 1$ and $\chi(x,y,0)= 0$. 
Then, for any $t\in[0,T_0]$, we define our fluid test function as follows (see also \cite{T24slip}): 
\begin{align}
	\bq(t)&:= (J_{\bfeta}(t))^{-1}\nabla \bA_\bfeta(t)\int_{t-h}^t \left( J_\bfeta(s)(\nabla  \bA_\bfeta(s))^{-1} (\bu(s)-\bw(s))\right)ds \notag	\\
	&+ \int_{t-h}^t\left( \bw_M(s)-\left( \frac{b_M(s,t)}{b_0(s,t)}\right) \bfxi_0(s)\chi \right) ds \notag\\
	&	-(J_{\bfeta}(t))^{-1}\nabla \bA_\bfeta(t)\int_{t-h}^t\sB\Big( \text{div} \left( J_\bfeta(s)(\nabla  \bA_\bfeta(s))^{-1} \bw(s)-J_{\bfeta}(t)(\nabla  \bA_\bfeta(t))^{-1} \bw_M(s)\right)\notag\\
	&	+\left( \frac{b_M(s,t)}{b_0(s,t)}\right) \text{div}(J_{\bfeta}(t)\nabla \bA_\bfeta^{-1}(t)\bfxi_0(s)\chi)\Big)ds.\notag
\end{align}
In other words, we define the fluid test function $\bq$ as the time integral from $t-h$ to $t$ of a modification $\bu_M$ of $\bu$:
\begin{equation}
	\begin{split}\label{test_f}
		\bq(t)&:= \int_{t-h}^t \left[ P_t^{-1} P_s (\bu(s)-\bw(s))	+\left( \bw_M(s)- \mathsf{c}_M(s,t) \bfxi_0(s)\chi \right) \right] ds \\
		&	-\int_{t-h}^tP_t^{-1}\sB\Big( \text{div} \left( P_s\bw(s)-P_t \bw_M(s)	+\mathsf{c}_M(s,t) P_t\bfxi_0(s)\chi \right)\Big)ds\\
		&:= \int_{t-h}^t\bu_M(s,t)ds,
	\end{split}
\end{equation}
where:
\begin{itemize}
	\item $\bw=\bv\chi$ and $\bw_M=\bv_M\chi$, 
	\item	$\chi$ is a smooth function such that $\chi(x,y,1)= 1$ and $\chi(x,y,0)= 0$ for $(x,y)\in \Gamma$,
	\item $P_t({\bf f}):=((J_{\bfeta})\nabla \bA^{-1}_{\bfeta}\cdot {\bf f})(t)$ is the Piola transformation composed with the ALE map $\bA_\bfeta$ for any ${\bf f}\in\bL^2(\sO)$.
	\item $\sB$ is the Bogovski operator on the fixed domain $\sO$. Recall that (see e.g. \cite{GHH06, Galdi}) if $\int_\sO f=0$ then
	$$\nabla\cdot\sB(f)=f, \quad \text{and}\quad \|\sB(f)\|_{\bH^{1}_0(\sO)} \leq c\|{ f}\|_{L^2(\sO)}.$$
	\item The correction term $\mathsf{c}_M\bfxi_0\chi$ where $\mathsf{c}_M(s,t)= \frac{b(s,s)-b_M(s,t)}{b_0(s,t)} \in L^\infty([0,T]^2) $ ensures that the above condition $\int_\sO f=0$ is met.
	\item Here, $b_M(s,t)=\int_\Gamma(\nabla\bfeta(t) \times \bv_M(s))$, $b(s,t)=\int_\Gamma(\nabla\bfeta(t)\times\bv(s))$ and $b_0(s,t)=\int_\Gamma(\nabla\bfeta(t) \times \bfxi_0(s))$,
	\item $\bfxi_0 \in \bC^\infty_0(\Gamma\times[0,T_0])$ is chosen such that $b_0(s,t) = 1$ for any $s,t \in [0,T_0]$.
\end{itemize}
For the structure test function we define:
\begin{align}\label{test_s}
	\bfpsi(t):=\int_{t-h}^t\left( \bv_M(s)-
	\mathsf{c}_M(s,t)\bfxi_0(s)\right) ds.
\end{align}
We will first show that $\bq$ has the required regularity. Thanks to the embedding
$$W^{1,\infty}(0,T;\bL^2(\Gamma))\cap L^\infty(0,T;\bH^2(\Gamma)) \hookrightarrow C^{0,\theta}(0,T;\bH^{2-2\theta}(\Gamma)).$$
we have for any $\theta \in (0,1)$ and $m\geq 2-2\theta$, that
\begin{align}\label{holder_eta}
	\|\tau_h\bfeta-\bfeta\|_{L^\infty(0,T_0;\bH^{m}(\Gamma))}\leq \|\bfeta\|_{C^{0,\theta}(0,T_0;\bH^{2-2\theta}(\Gamma))} h^\theta \leq Ch^{\theta}.
\end{align}
Moreover, 
setting $s=t-h$ for any $t\in[h,T_0]$ we see that
\begin{align}
	|\mathsf{c}_M(s,t)| &\leq |b(s,s)-b_M(s,t)| =
	|\int_\Gamma (\nabla\bfeta(t)-\nabla\bfeta(s))\times\bv_M(s) + \nabla\bfeta(s)\times(\bv_M-\bv)(s)|\notag\\
	&	\leq \|\bfeta(t)-\bfeta(s)\|_{\bH^1(\Gamma)}\|\bv_M\|_{\bL^2(\Gamma)} + \|\bfeta(s)\|_{\bH^2(\Gamma)}\|\bv_M(s)-\bv(s)\|_{\bH^{-1}(\Gamma)}\notag\\
	& \lesssim h^{\frac12}\|\bfeta\|_{C^{0,\frac12}(0,T_0;\bH^1(\Gamma))} + \lambda_M^{-1}\|\bv(s)\|_{\bH^1(\Gamma)}\notag\\
	&\leq Ch^{\frac12}\label{est:c}.
\end{align}
This estimate along with \eqref{reg_eta} and Lemma \ref{uest} further gives us the necessary bounds for the test function $\bq$:
\begin{align}
	\|\bq\|_{L^\infty(0,T_0;\bH^{1}(\sO))} &\leq \sup_{0\le t\leq T_0}\left( \|P_t\|^2_{\bH^{2.5+\delta}(\sO)}\int_{t-h}^t(\|\bu+\bw\|_{\bH^1(\sO)} +\|\bw_M\|_{\bH^1(\sO)} + |\mathsf{c}_M(s,t)|)ds\right) \notag\\
	&\leq h^{\frac12} \|\bfeta\|^4_{L^\infty(0,T_0;\bH^{2+\delta}(\Gamma))} ( \|\bu\|_{L^2(0,T_0;\bH^1(\sO))}+\|\bw\|_{L^2(0,T_0;\bH^1(\sO))} )\notag\\
	&\leq Ch^{\frac12}.\label{boundsq}
\end{align}
Next, we readily observe that,
$$\bq|_\Gamma = \bfpsi, \qquad\text{ on } (0,T)\times\Gamma.$$
Moreover, thanks to Theorem 1.7-1 in \cite{C88}, we have that
\begin{equation}
	\begin{split}\label{stot}
		&	\text{div}^{\bfeta(t)}\bq_{M}(t)=
		{J_{\bfeta}(t)}^{-1}\int_{t-h}	^t\text{div}(P_s \bu(s))= {J_{\bfeta}(t)}^{-1}\int_{t-h}	^t{J_\bfeta(s)\text{ div}^{\bfeta(s)}\bu(s)}=0.
	\end{split}	
\end{equation}
Hence $(\bq_M,\bfpsi_M)$ is a valid test function for \eqref{weaksol}. For this pair of test functions we get:
\begin{equation}\label{weakform_reg}
	\begin{split}
		&-\int_h^{T_0}\int_\sO{ J}_{\bfeta}\bu\cdot \partial_t\left( \int_{t-h}^t{\bu_M}\right)  -\int_h^{T_0}\int_\Gamma \partial_t {\bfeta} \left( \partial_t \int_{t-h}^t\bv_M\right) = \int_h^{T_0 }\int_\Gamma \Delta\bfeta\cdot\Delta \bfpsi_M\\
		&+\gamma \int_h^{T_0 }\int_\Gamma \Lambda^{1+s}\partial_t\bfeta:\Lambda^{1+s} \bfpsi_M -\int_h^{T_0}\int_{\sO}J_{\bfeta}(\bu\cdot\nabla^{\bfeta }\bu \cdot\bq
		- \bw^{\bfeta}\cdot\nabla^{\bfeta }\bu\cdot\bq_M )\\
		&+\int_h^{T_0}\int_{\sO}\partial_tJ_\bfeta 
		\bu\cdot\bq_M 
		-2\nu\int_h^{T_0}\int_{\sO}{J}_{\bfeta}\, \bD^{{\bfeta} }(\bu ): \bD^{{\bfeta} }(\bq_M)\\
		&= I_1+...+I_5.
	\end{split}
\end{equation}
Before estimating each term on the right-hand side of the equation above, we observe that the first term on the left-hand side can be written as,
\begin{align*}
	\int_h^{T_0}\int_{\sO}J_\bfeta\bu&\cdot\partial_t\left( \int_{t-h}^t{\bu_M}\right) =	\int_h^{T_0}\int_{\sO}J_\bfeta\bu\cdot\left( \bu_M(t,t)-\bu_M(t-h,t)+\int_{t-h}^t\partial_t\bu_M(s,t)ds\right)\\
	&	=\int_h^{T_0}\int_{\sO}J_\bfeta\bu\cdot(\bu(t)-\bu(t-h))\\
	&	+\int_h^{T_0}\int_{\sO}J_\bfeta\bu\cdot\left( \bu_M(t,t)-\bu(t)-\bu_M(t-h,t)+\bu(t-h)+\int_{t-h}^t\partial_t\bu_M(s,t)ds\right)\\
	&= I_0^1+I_0^2+I_0^3.
\end{align*}

Observe that the term $I_0^1$ on the right-hand side gives us the desired term $\|\tau_h\bu-\bu\|_{L^2(0,T;\bL^2(\sO))}$ in the left-hand side of \eqref{transl} since we can write
\begin{align*}
	I_0^1&=	\int_h^{T_0}\int_{\sO}J_\bfeta(t)\bu(t)\cdot(\bu(t)-\bu(t-h)) \\
	&= \frac12\int_h^{T_0}\int_{\sO}J_\bfeta\left( |\bu(t)|^2 - |\bu(t-h)|^2+|\bu(t)-\bu(t-h)|^2\right) .
\end{align*}
The second term on the right side in the expression for $I_0^1$ above  is the desired term, whereas the first one can be bounded as follows:
\begin{align*}
	\int_h^{T_0}\int_{\sO}J_\bfeta&\left( |\bu(t)|^2 - |\bu(t-h)|^2\right) = 	\int_h^{T_0}\int_{\sO}J_\bfeta(t) |\bu(t)|^2 - J_\bfeta(t-h) |\bu(t-h)|^2\\
	&-\int_h^{T_0}\int_\sO(J_\bfeta(t)-J_\bfeta(t-h))|\bu(t-h)|^2\\
	&\leq h\left( \sup_{0\leq t\leq T_0}\int_{\sO}J_\bfeta(t)|\bu(t)|^2 + \|\partial_t J_\bfeta\|_{L^\infty(0,T_0;\bL^2(\sO))} \|\bu\|^2_{L^2(0,T_0;\bL^4(\sO))}\right) \\
	&\leq Ch.
\end{align*}
Before analyzing $I_0^2$ we treat the third term $I_0^3$. We observe that for any $t\in(0,T_0)$  we have
\begin{align*}
	\|\partial_t\bu_M(s,t)\|_{\bL^\frac32(\sO)}&\leq\|	\partial_tP_t^{-1}P_s(\bu(s)-\bw(s))\|_{\bL^\frac32(\sO)} + \sup_{t-h\leq s\leq t} \partial_t\mathsf{c}_M(s,t)\|	\bfxi_0(s)\chi\|_{\bL^2(\sO)}\\
	&\leq C(\|P_s\|_{\bL^\infty(\sO)}\|\partial_tP_t\|_{\bL^2(\sO)}\|\bu(s)-\bw(s)\|_{\bL^6(\sO)}+ \|\bv(t)\|_{\bH^1(\Gamma)}\|\bv_M(s)\|_{\bL^2(\Gamma)}).
\end{align*}
Hence, we arrive at the following estimate for the term $I_0^3$,
\begin{align*}
	|I_0^3|&=|\int_h^{T_0}\int_\sO J_\bfeta\bu	\left( \int_{t-h}^t\partial_t\bu_M(s,t)ds\right) d{\bf x}dt| \\
	&\leq h^{\frac12}\int_h^{T_0} \left( \|J_\bfeta(t)\|_{\bL^6(\sO)}\|\bu(t)\|_{\bL^3(\sO)}
	\left( \int_{t-h}^t\|\partial_t\bu_M(s,t)\|^2_{\bL^\frac32(\sO)}ds\right)^\frac12  \right) dt\\
	&\leq Ch^\frac12.
\end{align*}
Now we begin with our calculations for the term $I_0^2$. First note that
\begin{align*}
	\|\bw_M-\bw\|_{\bL^2(\sO)} \leq C\|\bv_M-\bv\|_{\bH^{-\frac12}(\Gamma)} \leq C\lambda_M^{-\frac34}\|\bv\|_{\bH^1(\Gamma)}.
\end{align*}
Moreover, thanks to the embedding $H^{\frac12}(\sO)\hookrightarrow L^3(\sO)$ and the fact that the ALE maps satisfy \eqref{ale_Hs}, the estimate \eqref{holder_eta} gives us,
\begin{align*}
	\sup_{t-h\leq s \leq t}	\|P_s-P_t\|_{\bL^3(\sO)}& \leq \|\bfeta\|_{L^\infty(0,T_0;\bH^{2+\delta}(\Gamma))}	\sup_{t-h\leq s \leq t}\|\bfeta(s)-\bfeta(t)\|_{\bH^1(\Gamma)} \\
	&\leq h^{\frac12}\|\bfeta\|_{C^{0,\frac12}(0,T_0;\bH^1(\Gamma))} \\
	&\leq Ch^{\frac12}.
\end{align*}
These two estimates put together give us,
\begin{align*}
	\int_h^{T_0}& 	\|\bu_M(t-h,t)-\bu(t-h)\|^2_{\bL^2(\sO)}dt\\
	&\leq \int_h^{T_0}(\|P^{-1}_t\|_{\bL^\infty(\sO)}\|P_{t-h}-P_t\|_{\bL^3(\sO)}\|\bu-\bw\|_{\bL^6(\sO)} + \|\bw-\bw_M\|_{\bL^2(\sO)} +|\mathsf{c}_M(t-h,t)|)^2dt\\
	& \leq Ch^{}.
\end{align*}
Hence, for the term $I_0^2$ we find,
\begin{align*}
	|I_0^2| &= |\int_h^{T_0}\int_{\sO}J_\bfeta\bu\cdot( \bu_M(t,t)-\bu(t)-\bu_M(t-h,t)+\bu(t-h))dxdt|\\
	&\leq\|J_\bfeta\|_{L^\infty(0,T_0;\bL^p(\sO))}\|\bu\|_{L^2(0,T_0;\bH^1(\sO))} (\int_h^{T_0}\|\bu_M(t-h,t)-\bu(t)\|^2_{\bL^2(\sO)}dt)^{\frac12} \\
	&\leq Ch^{\frac12}.
\end{align*}
Similarly, we treat the second term on the left-hand side of \eqref{weakform_reg} which will produce the second term on the left-hand side of our desired inequality \eqref{transl}. We write,
\begin{align*}
	\int_h^{T_0}\int_\Gamma \bv(t) \,\partial_t\left( \int_{t-h}^t{\bv_M}(s)ds\right)dt&=\int_h^{T_0}\int_\Gamma \bv(t) (\bv_M(t)-\bv_M(t-h) - (\mathsf{c}_M(t,t)-\mathsf{c}_M(t-h,t))\bfxi_0)\\
	&= J_0^1+ J_0^2.
\end{align*}
Observe that due to orthonormality of $P_M$ we have
\begin{align*}
	J_0^1&=\int_h^{T_0}\int_\Gamma \bv(t) (\bv_M(t)-\bv_M(t-h))dxdydt\\
	& = \int_h^{T_0}\int_\Gamma \bv_M(t) (\bv_M(t)-\bv_M(t-h))dxdydt \\
	&= \frac12\int_h^{T_0}\int_\Gamma ( |\bv_M(t)|^2-|\bv_M(t-h)|^2 + |\bv_M(t)-\bv_M(t-h)|^2)dxdydt.
\end{align*}
Here, as mentioned previously, the second term is another one of our desired terms in \eqref{transl} whereas the first term can be bounded as follows,
$$\int_h^{T_0} (\|\bv_M(t)\|_{\bL^2(\Gamma)}^2-\|\bv_M(t-h)\|_{\bL^2(\Gamma)}^2)dt =\int_{T_0-h}^{T_0}\|\bv_M(t)\|_{\bL^2(\Gamma)}^2dt\leq h\|\bv\|^2_{L^\infty(0,T_0;\bL^2(\Gamma))}.$$
Notice that here we also used the property of projection that states $\|P_M\bv\|_{\bH^k} \leq \gamma^{\frac{m-k}2}\|\bv\|_{\bH^m}$.
Thanks to \eqref{est:c}, we readily further deduce that
\begin{align*}
	|J_0^2|=|	\int_h^{T_0}\int_\Gamma (\mathsf{c}_M(t,t)-\mathsf{c}_M(t-h,t))\bfxi_0| \leq Ch^\frac12.
\end{align*}
This completes the treatment of the terms on the left-hand side of \eqref{weakform_reg}.
Hence, by combining all the estimates, we summarize that so far we have
\begin{align*}
	\|\tau_h\bu-\bu\|_{L^2(0,T_0;\bL^2(\sO))} + \|\tau_h\partial_t\bfeta-\partial_t\bfeta\|_{L^2(0,T_0;\bL^2(\Gamma))}	\leq Ch^{\frac12} + \sum_{j=1}^{8}|I_j|
\end{align*}

Now we estimate  the terms $I_j, j=1,...,5$ that appear on the right-hand side of \eqref{weakform_reg}. We start with $I_1$, which is one of the more crucial terms. Thanks to the property of projection operators stating $\|P_M\bv\|_{\bH^k} \leq \gamma^{\frac{m-k}2}\|\bv\|_{\bH^m}$ and the estimate \eqref{est:c}, we obtain
\begin{align*}
	|I_1|&=|\int_h^{T_0}\int_\Gamma \Delta\bfeta\cdot\Delta \bfpsi | =\int_h^{T_0}\int_\Gamma\Delta\bfeta\cdot\Delta \left( \int_{t-h}^t\left( \bv_M(s)-
	\mathsf{c}_M(s,t)\bfxi_0(s)\right) ds\right) \\
	&\leq \int_h^{T_0}\int_{t-h}^t\|\bfeta(t)\|_{\bH^2(\Gamma)}\left( \|\bv_M(s)\|_{\bH^2(\Gamma)}+ |\mathsf{c}_M(s,t)|\|\bfxi_0(s)\|_{\bH^2(\Gamma)}\right) dsdt\\
	& \leq \int_h^{T_0}\int_{t-h}^t\|\bfeta(t)\|_{\bH^2(\Gamma)}\left( \lambda_M\|\bv(s)\|_{\bL^2(\Gamma)}+|\mathsf{c}_M(s,t)|\|\bfxi_0(s)\|_{\bH^2(\Gamma)} \right) dsdt\\
	& \leq Ch\|\bfeta\|_{L^\infty(0,T_0;\bH^2(\Gamma))}(\lambda_M\|\bv\|_{L^\infty(0,T_0;\bL^2(\Gamma))}+h^{\frac12})\leq Ch^{\frac14}.
\end{align*}
Similarly, for the next term we obtain
\begin{align*}
	|I_2|=|\gamma \int_h^{T_0 }\int_\Gamma \Lambda^{1+s}\partial_t\bfeta:\Lambda^{1+s} \bfpsi| \leq h^{\frac12}	\|\bv\|_{L^2(0,T;\bH^{1+s}(\Gamma))} \leq Ch^{\frac12}.
\end{align*}
The next two terms $I_3,I_4$ are treated using \eqref{boundsq}. For the nonlinear term we have
\begin{align*} 
	|I_3|&= |\int_h^{T_0}\int_{\sO}J_{\bfeta}(\bu\cdot\nabla^{\bfeta }\bu \cdot\bq
	- \bw^{\bfeta}\cdot\nabla^{\bfeta }\bu\cdot\bq )|\\
	&\leq C\|J_\bfeta\|_{L^\infty(0,T_0;\bL^6(\sO))}\|\bu\|_{L^2(0,T_0;\bL^6(\sO))}\|\bu\|_{L^2(0,T_0;\bH^1(\sO))}\|\bq\|_{L^\infty(0,T_0;\bL^6(\sO))}\\
	&\leq C\|\bfeta\|_{L^\infty(0,T_0;\bH^2(\sO))}\|\bu\|^2_{L^2(0,T_0;\bH^1(\sO))}\|\bq\|_{L^\infty(0,T_0;\bH^1(\sO))}\\
	&\leq Ch^{\frac12}.
\end{align*} 
The terms $I_4$ and $I_5$ are treated identically. For $I_4$ we use the fact that $\bw^{\bfeta}$ is the harmonic extension of $\bv$ in $\sO$ which implies that $\|\partial_t\bA_\bfeta\|_{\bH^{k+\frac12}(\sO)} \leq C \|\bv\|_{\bH^k(\Gamma)}$. Hence \eqref{boundsq} leads to
\begin{align*}
	|I_4|=|	\int_h^{T_0}\int_{\sO}\partial_tJ_\bfeta \bu\cdot\bq| \leq \|\partial_tJ_\bfeta\|_{L^2(0,T;\bL^2(\sO))}\|\bu\|_{L^2(0,T;\bH^1(\sO))}\|\bq\|_{L^\infty(0,T;\bH^1(\sO))}\leq Ch^{\frac12}.
\end{align*}
Next, for $I_5$ we see that,
\begin{align*}
	|I_5|&=|\int_h^{T_0}\int_{\sO}{J}_{\bfeta}\, \bD^{{\bfeta} }(\bu ): \bD^{{\bfeta} }(\bq)| \leq C\|J_\bfeta\|_{L^\infty(0,T;\bL^\infty(\Gamma))}\|\bD^{{\bfeta} }(\bu )\|_{L^2(0,T;\bL^2(\sO))}\|\bD^{{\bfeta} }(\bq)\|_{L^2(0,T;\bL^2(\sO))}\\
	&\leq C\|J_\bfeta\|_{L^\infty(0,T;\bL^\infty(\Gamma))}\|(\nabla\bA_\bfeta)^{-1}\|^2_{L^\infty(0,T;\bL^\infty(\Gamma))}\|\nabla\bu \|_{L^2(0,T;\bL^2(\sO))}\|\nabla\bq\|_{L^2(0,T;\bL^2(\sO))}\\
	&\leq C\|\bfeta\|^3_{L^\infty(0,T;\bH^{2+\delta}(\Gamma))}\|\bu \|_{L^2(0,T;\bH^1(\sO))}\|\bq\|_{L^2(0,T;\bH^2(\sO))}\leq Ch^{\frac12}.
\end{align*}
Finally, we collect all the terms and obtain that,
\begin{align*}
	\|\tau_h\bu-\bu\|_{L^2(h,T_0;\bL^2(\sO))} + \|\tau_h\bv_M-\bv_M\|_{L^2(h,T_0;\bL^2(\Gamma))} \leq Ch^{\frac14}.
\end{align*}
Moreover, due to the following property of the projection $P_M$ $$\|\bv-P_M\bv\|_{L^2(0,T;\bL^2(\Gamma))} \leq C\lambda_M^{-\frac12}\|\bv\|_{L^2(0,T;\bH^1(\Gamma))} \leq Ch^{\frac38}\|\bv\|_{L^2(0,T;\bH^1(\Gamma))},$$
we come to our desired result,
\begin{align*}
	\|\tau_h\bu-\bu\|^2_{L^2(h,T_0;\bL^2(\sO))} + \|\tau_h\bv-\bv\|^2_{L^2(h,T_0;\bL^2(\Gamma))} \leq Ch^{\frac14}.
\end{align*}
This completes the proof of Theorem \ref{thm:Lip2}.

\section{Existence result}\label{sec:exist}

In this section we will provide a brief proof of Theorem~\ref{thm:exist}, namely, we prove the existence of a weak solution to our FSI problem. We will work with the problem posed on the fixed domain, 
and consider a weak formulation of the problem in the sense of Definition \ref{weakform_steady}. 
The plan is to discretize the problem in time and construct approximate solutions by employing the Lie operator splitting strategy to decouple the FSI problem into a fluid and a structure subproblem. This is done in the spirit of \cite{MC13} (see also \cite{MC14,MCG20}). For the convenience of the reader we will reproduce all the important steps in this section and for complementary details we refer to \cite{CKMT24}.
 We introduce the splitting scheme in the next subsection and then discuss the strategy for the rest of our proof in subsequent subsections.
	\subsection{The Lie operator splitting scheme}The overarching idea behind the Lie operator splitting scheme is to solve the evolution equation $\frac{d\phi}{dt} +S(\phi)=0$ by splitting the operator $S$ as a nontrivial sum $S=S_1+S_2$.  The time interval $(0,T)$ is divided into $N$ sub-intervals of size $\Delta t$ and on each sub-interval $(n\Delta t,(n+1)\Delta t)$ the evolution equations $\frac{d\phi_i}{dt} +S_i(\phi_i)=0$, $i=1,2$, are solved. 	In our case, we semidiscretize the problem in time and use operator splitting to divide the coupled problem {\it along the dynamic coupling condition} into two subproblems: a fluid and a structure subproblem.	The initial value for the structure sub-problem is taken to be the solution from the previous step, whereas the initial value for the fluid sub-problem is taken to be the just calculated solution found in the first sub-problem. 

 Our strategy is that in the first (structure) subproblem, we keep fluid velocity the same and update only the structure displacement and structure velocity, while in the second (fluid) subproblem we update the fluid and the structure velocities while keeping the structure displacement the same. The kinematic coupling condition is enforced in the second subproblem.

	For any $N\in\mathbb{N}$, we denote the time step by $\Delta t=\frac{T}{N}$ and use the notation $t^n=n\Delta t$ for $n=0,1,...,N$. 
	For any $N\in \mathbb{N}$ we introduce the following discrete total energy and dissipation for $i=0,1$ and $n=0,1,..,N-1$:
	\begin{equation}\label{EnDn}
		\begin{split}
			E_N^{n+\frac{i}2}&=\frac12\Big(\int_{\sO}J^n|\bu^{n+\frac{i}2}|^2 dx
			+\|\bv^{n+\frac{i}2}\|^2_{\bL^{2}(\Gamma)}+\|\bfeta^{n+\frac{i}2}\|^2_{\bH^2(\Gamma)}+\frac1N\|\bfeta^{n+\frac{i}2}\|^2_{\bH^3(\Gamma)}\Big),\\
			D_N^{n}&=\Delta t \int_{\sO} 2\nu J^n  |\bD^{\eta^{n}}(\bu^{n+1})|^2dx + \|\bv^{n+1}\|^2_{\bH^{1+s}(\Gamma)} .
	\end{split}\end{equation}
	
	The splitting scheme consisting of the two subproblems is defined as follows.  Let $(\bu^0,\bfeta^0,\bv^0)=(\bu_0,\bfeta_0,\bv_0)$ be the initial data. Then at the $j^{th}$ time level, we update the vector $(\bu^{n+\frac{j}{2}},\bfeta^{n+\frac{j}{2}},\bv^{n+\frac{j}{2}})$, where $j=1,2$ and $n=0,1,2,...,N-1$, according to the following scheme.
	
\noindent{\bf Structure sub-problem}: For any $n \leq N$ we look for $(\bfeta^{n+\frac12},\bv^{n+\frac12})$ such that
\begin{equation}
	\begin{split}\label{first}
		\bu^{n+\frac12}&=\bu^n,\\
		\int_\Gamma(\bfeta^{n+\frac12}-\bfeta^n) \bfphi &= (\Delta t)\int_\Gamma \bv^{n+\frac12}\bfphi ,\\
		\int_\Gamma \left( \bv^{n+\frac12}-\bv^n\right)  \bfpsi  &+ (\Delta t)\int_\Gamma \Delta\bfeta^{n+\frac12}\cdot\Delta \bfpsi + \frac{(\Delta t)}N\int_\Gamma \nabla^3\bfeta^{n+\frac12}\cdot\nabla^3 \bfpsi =0,
	\end{split}
\end{equation}
for any $\bfphi \in \bL^2(\Gamma)$ and $\bfpsi \in \bH^3(\Gamma)\cap \sV_S$.

\begin{remark} We notice that we have augmented this subproblem with a regularizing term, which is the last term on the left hand-side of the third equation
(the elastodynamics equation) in \eqref{first}. This term will vanish when we pass $N\to \infty$. The presence of this term is attributed to the fact that splitting the FSI problem along the dynamic coupling condition causes a mismatch between the structure velocity and the trace of the fluid velocity on $\Gamma$ (see also the second subproblem \eqref{second}). This is not ideal for an application of the regularity result of Theorem~\ref{thm:Lip1} at the level of approximate formulations (see Theorem \ref{thm:estN} below). This mismatch is taken care of by using bounds on numerical dissipation that result from the addition of this regularization term (see the discussion following \eqref{mismatch}). 
\end{remark}

Before commenting on the existence of a solution to the structure subproblem \eqref{first}, we introduce the second subproblem that updates the fluid and structure velocities in the fixed domain formulation \eqref{weaksol}.\\

\noindent {\bf	Fluid sub-problem}: Introduce the following functional space for the fluid velocity
$$\sV^n : = \{\bu\in \bH^1(\sO): \nabla^{\bfeta^n}\cdot \bu=0 \text{ on }\sO, \bu=0 \text{ on } \Gamma_r \}.$$
 We look for $(\bu^{n+1},\bv^{n+1}) \in \sV^n\times\bH^{1+s}(\Gamma)$ such that the following equations are satisfied for any $(\bq,\bfpsi)\in \sV^n\times\bH^{1+s}(\Gamma)$ such that $ \bq|_{\Gamma} =\bfpsi$:
\begin{equation}
	\begin{split}\label{second}
		&\qquad\qquad{\bfeta}^{n+1}:={\bfeta}^{n+\frac12}, \\
		&\int_{\sO}
		J^n\left( \bu^{n+1}-\bu^{n+{\frac12}}\right) \bq   +\frac{1}2\int_{\sO}
		\left( J^{n+1}-J^n\right)  \bu^{n+1}\cdot\bq \\
		&+\frac12(\Delta t)\int_{\sO}J^n((\bu^{n}-
		\bw^{{n+1}})\cdot\nabla^{{\bfeta}^n}\bu^{n+1}\cdot\bq - (\bu^{n}-
		\bw^{{n+1}})\cdot\nabla^{{\bfeta}^n}\bq\cdot\bu^{n+1}) \\
		&+2\nu(\Delta t)\int_{\sO}J^n \bD^{{\bfeta}^{n}}(\bu^{n+1})\cdot \bD^{{\bfeta}^{n}}(\bq) 
		+\int_\Gamma(\bv^{n+1}-\bv^{n+\frac12} )\bfpsi 
		\\
			& +(\Delta t)\int_\Gamma  \Lambda^{1+s}\bv^{n+1}\cdot \Lambda^{1+s}\bfpsi  	=0,
	\end{split}
\end{equation}
and the kinematic coupling condition is satisfied 
$$\bu^{n+1}|_{\Gamma}=\bv^{n+1}.$$ 
Here,  we use the notation
$$\bw^{n}=\frac{1}{\Delta t}(\bA_{{\bfeta}^{n+1}}-\bA_{{\bfeta}^n}),\quad J^n=\text{det}\nabla \bA_{{\bfeta}^n},$$
where $\bA_{\bfeta}$ denotes the solution to \eqref{ale} corresponding to the boundary data ${\bf id}+\bfeta$.

Equations \eqref{first} and \eqref{second} define the two steps in our splitting scheme. 

Next we discuss the existence of unique solutions for the two subproblems \eqref{first} and \eqref{second}.
\begin{theorem}[Existence and uniqueness result for the subproblems] 
\label{ExistenceSubProblems}
The following statements hold true:
	\begin{enumerate} 
		\item	Given $\bfeta^n\in \bH^2(\Gamma)$ and $\bv^{n}\in \bL^2(\Gamma)$ there exist unique $\bfeta^{n+\frac12},\bv^{n+\frac12}\in \bH^2(\Gamma)$ that solve \eqref{first}, and the following semidiscrete energy inequality holds:
	\begin{equation}\label{energy1}
		\begin{split}
			E^{n+\frac12} +\frac12\|\bv^{n+\frac12}-\bv^n\|_{\bL^2(\Gamma)}^2  +\frac12 \|\bfeta^{n+\frac12}-\bfeta^{n}\|_{\bH^2(\Gamma)}^2+\frac1{2N} \|\nabla^3\bfeta^{n+\frac12}-\nabla^3\bfeta^{n}\|_{\bL^2(\Gamma)}^2 \leq E^{n}.
		\end{split}
	\end{equation}
\item 	Given  $(\bu^{n+\frac12},\bv^n) \in \sV^{n-1}\times \bH^{1+s}(\Gamma)$ and $\bv^{n+\frac12}\in \bH^2(\Gamma)$, and $\bfeta^n\in \bH^2(\Gamma)$ assume  that $\inf_{\sO}J^n>\alpha$ for some fixed $\alpha>0$ for every $0\leq n\leq N$. 
Then there exists a unique $(\bu^{n+1},\bv^{n+1}) \in\sV^n\times \bH^{1+s}(\Gamma)$ that solves \eqref{second}, and the solution satisfies the following energy estimate
	\begin{equation}\label{energy2}
		\begin{split}
			E^{n+1}+D^{n}+\frac12\int_\sO J^n\left( |\bu^{n+1}-\bu^{n}|^2 \right)  &+\frac12\int_\Gamma|\bv^{n+1}-\bv^{n+\frac12}|^2 
			\leq E^{n+\frac12} ,
	\end{split}\end{equation}
where $E_n$ and $D_n$ are defined in \eqref{EnDn}.
\end{enumerate}
\end{theorem}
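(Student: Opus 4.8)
The plan is to treat the two subproblems one at a time. Each semidiscrete system in \eqref{first} and \eqref{second} is \emph{linear}, so in each case I would recast it as a variational problem on an appropriate Hilbert space, solve it by the Lax--Milgram lemma (by the Riesz representation theorem when the form is symmetric), and then recover the stated energy (in)equality by testing with the solution itself and repeatedly invoking the elementary identity $2(a-b)\cdot a=|a|^2-|b|^2+|a-b|^2$, which produces the telescoping structure of the $E^{\cdot}$'s together with the squared-increment dissipation terms.

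For the structure subproblem, the first relation in \eqref{first} forces $\bfeta^{n+\frac12}=\bfeta^n+\Delta t\,\bv^{n+\frac12}$; substituting this into the elastodynamics equation gives a single variational problem for $\bv^{n+\frac12}$ on $\bH^3(\Gamma)\cap\sV_S$: find $\bv^{n+\frac12}$ with $a(\bv^{n+\frac12},\bfpsi)=\ell(\bfpsi)$ for all $\bfpsi$, where
\[
a(\bv,\bfpsi)=\int_\Gamma\bv\cdot\bfpsi+(\Delta t)^2\int_\Gamma\Delta\bv:\Delta\bfpsi+\frac{(\Delta t)^2}{N}\int_\Gamma\nabla^3\bv:\nabla^3\bfpsi ,
\]
and $\ell(\bfpsi)=\int_\Gamma\bv^n\cdot\bfpsi-\Delta t\int_\Gamma\Delta\bfeta^n:\Delta\bfpsi-\frac{\Delta t}{N}\int_\Gamma\nabla^3\bfeta^n:\nabla^3\bfpsi$. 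The form $a$ is symmetric, continuous, and coercive on $\bH^3(\Gamma)$ since on the torus $\|\bv\|_{\bL^2(\Gamma)}^2+\|\nabla^3\bv\|_{\bL^2(\Gamma)}^2$ is equivalent to $\|\bv\|_{\bH^3(\Gamma)}^2$, and $\ell$ is bounded on $\bH^3(\Gamma)$ provided $\bfeta^n\in\bH^3(\Gamma)$, which holds inductively (the regularizing $\tfrac1N$-term keeps every iterate in $\bH^3(\Gamma)$, the initial displacement being mollified if necessary). The Riesz theorem yields a unique $\bv^{n+\frac12}$, and $\bfeta^{n+\frac12}:=\bfeta^n+\Delta t\,\bv^{n+\frac12}$ completes the unique pair. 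Then \eqref{energy1} follows by testing the elastodynamics equation with $\bfpsi=\bv^{n+\frac12}$ and telescoping each term using $\Delta t\,\bv^{n+\frac12}=\bfeta^{n+\frac12}-\bfeta^n$ (the fluid part of $E$ is untouched because $\bu^{n+\frac12}=\bu^n$).

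For the fluid subproblem I would work on the constrained Hilbert space $\mathcal Q^n:=\{(\bq,\bfpsi)\in\sV^n\times\bH^{1+s}(\Gamma):\bq|_\Gamma=\bfpsi\}$, a closed subspace of $\bH^1(\sO)\times\bH^{1+s}(\Gamma)$, and define $\mathcal A((\bu,\bv),(\bq,\bfpsi))$ to be the left-hand side of \eqref{second} with $(\bu^{n+1},\bv^{n+1})$ replaced by $(\bu,\bv)$ and the data terms removed, together with the functional $\mathcal L((\bq,\bfpsi))=\int_\sO J^n\bu^{n+\frac12}\cdot\bq+\int_\Gamma\bv^{n+\frac12}\cdot\bfpsi$. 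Continuity of $\mathcal A$ and $\mathcal L$ on $\bH^1(\sO)\times\bH^{1+s}(\Gamma)$ uses $J^n\in L^\infty(\sO)$ and $\bw^{n+1}\in\bL^\infty(\sO)$ (consequences of $\bfeta^n,\bfeta^{n+1}\in\bH^2(\Gamma)$, the harmonic-extension bound \eqref{ale_Hs}, and Sobolev embedding), together with Hölder and $\bH^1(\sO)\hookrightarrow\bL^6(\sO)$ for the transport term. The decisive point is coercivity: the transport term vanishes identically when $(\bq,\bfpsi)=(\bu,\bv)$ because of the skew-symmetric way it is written; the zeroth-order fluid terms recombine as $\int_\sO J^n|\bu|^2+\tfrac12\int_\sO(J^{n+1}-J^n)|\bu|^2=\tfrac12\int_\sO(J^n+J^{n+1})|\bu|^2\ge\alpha\|\bu\|_{\bL^2(\sO)}^2$ using the standing hypothesis $\inf_\sO J^n>\alpha$ for all $n$; the viscous term $2\nu\Delta t\int_\sO J^n|\bD^{{\bfeta}^{n}}(\bu)|^2$ controls $\Delta t\,\|\bu\|_{\bH^1(\sO)}^2$ through the Korn inequality for the ALE-transformed symmetrized gradient (Lemma~1 of \cite{V12}, exactly as in Lemma~\ref{uest}); and $\|\bv\|_{\bL^2(\Gamma)}^2+\Delta t\|\Lambda^{1+s}\bv\|_{\bL^2(\Gamma)}^2\gtrsim\Delta t\,\|\bv\|_{\bH^{1+s}(\Gamma)}^2$. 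Hence $\mathcal A$ is coercive on $\mathcal Q^n$ (with a constant that may degenerate as $\Delta t\to0$, which is harmless at fixed $N$), Lax--Milgram gives a unique $(\bu^{n+1},\bv^{n+1})\in\mathcal Q^n$, and $\bu^{n+1}|_\Gamma=\bv^{n+1}$ is built in. Finally \eqref{energy2} is obtained by choosing $(\bq,\bfpsi)=(\bu^{n+1},\bv^{n+1})$: the transport term drops out, the mass terms telescope to $\tfrac12\int_\sO J^{n+1}|\bu^{n+1}|^2-\tfrac12\int_\sO J^n|\bu^{n+\frac12}|^2+\tfrac12\int_\sO J^n|\bu^{n+1}-\bu^{n+\frac12}|^2$, the structure inertia contributes $\tfrac12(\|\bv^{n+1}\|^2-\|\bv^{n+\frac12}\|^2+\|\bv^{n+1}-\bv^{n+\frac12}\|^2)$, and $\|\bfeta^{n+1}\|_{\bH^2(\Gamma)}$ and $\tfrac1N\|\bfeta^{n+1}\|_{\bH^3(\Gamma)}$ are unchanged since $\bfeta^{n+1}=\bfeta^{n+\frac12}$; rearranging yields \eqref{energy2} (in fact with equality).

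I expect the coercivity of $\mathcal A$ in the fluid subproblem to be the only genuinely delicate point: absorbing the indefinite term $\tfrac12\int_\sO(J^{n+1}-J^n)|\bu|^2$ (handled by the pointwise bound $\tfrac12(J^n+J^{n+1})\ge\alpha$ coming from the Jacobian hypothesis) and promoting control of $\bD^{{\bfeta}^{n}}(\bu)$ to control of $\|\bu\|_{\bH^1(\sO)}$ via the transformed Korn inequality. Everything else reduces to a routine application of Lax--Milgram and discrete telescoping.
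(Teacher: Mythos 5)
Your proposal is correct and takes essentially the same approach as the paper, which simply defers to \cite{MC13} for the Lax--Milgram argument; you have filled in exactly the intended details (recasting each subproblem as a coercive variational problem, using skew-symmetry of the transport term and $\tfrac12(J^n+J^{n+1})\ge\alpha$ for coercivity, and obtaining the energy identities by testing with the solution and telescoping via $2(a-b)\cdot a=|a|^2-|b|^2+|a-b|^2$). Your side remark that the structure step really needs $\bfeta^n\in\bH^3(\Gamma)$ inductively, guaranteed by the $\tfrac1N$-regularization after the first step, is a legitimate (and correct) observation that the paper leaves implicit.
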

\begin{proof}
	The proof of this theorem involves an application of the Lax-Milgram Lemma in a way similar to 
	the proofs of  Propositions 1, 2, 3 and 4 in \cite{MC13}.
\end{proof}
The rest of the proof of Theorem \ref{thm:exist} can be divided into 3 parts: Constructing approximate solutions, finding uniform estimates for the approximate solutions and then passing $N\to\infty$ to prove that the limiting function is the desired solution, which involves a construction of appropriate test functions to be able to pass to the limit. We start with the construction of approximate solutions.
\subsection{Approximate solutions}\label{sec:approxsol}
In this subsection we will define two sequences of approximate solutions corresponding to the fluid velocity $\bu$, structure displacement $\bfeta$ and the structure velocity $\bv$. First, as is common with time-discretizations, we define the following approximations that are piece-wise constant in time: For $t \in (n\Delta t, (n+1)\Delta t]$ we let
\begin{align}\label{AppSol}
	\bu_{N}(t,\cdot)=\bu^{n+1},\quad \bfeta_{N}(t,\cdot)=\bfeta^{n+1},\quad \bv_{N}(t,\cdot)=\bv^{n+1},\quad \bv^*_N(t,\cdot)=\bv^{n+\frac12} .
\end{align}
Furthermore, we define the corresponding piecewise linear interpolations: for $t \in [t^n,t^{n+1}]$ we let  
\begin{equation}
	\begin{split}\label{approxlinear}
		&\tilde\bu_{N}( t,\cdot)=\frac{t-t^n}{\Delta t} \bu^{n+1}+ \frac{t^{n+1}-t}{\Delta t} \bu^{n}, \quad \tilde \bv_{N}(t,\cdot)=\frac{t-t^n}{\Delta t} \bv^{n+1}+ \frac{t^{n+1}-t}{\Delta t} \bv^{n}.
	\\	& \tilde\bfeta_{N}( t,\cdot)=\frac{t-t^n}{\Delta t} \bfeta^{n+1}+ \frac{t^{n+1}-t}{\Delta t} \bfeta^{n}.
	\end{split}
\end{equation}
Observe that
$$\partial_t\tilde\bfeta_N=\bv^*_N.$$

We now define $ \bA_{\bfeta_N}$ as the piecewise constant interpolations of the approximate ALE maps $\bA_{\bfeta^n}$. Observe that, by definition, $\bA_{\bfeta_N}$ solves \eqref{ale} with boundary value $\bfeta_N$ on $\Gamma$. We denote its Jacobian by $J_N:=\text{det}\nabla \bA_{\bfeta_N}$, which by definition is the piecewise constant interpolation of the functions $J^n$. We will also require piecewise linear interpolations of $J^n$ which we will denote by $\tilde J_N$. Along with that we also define the approximate ALE velocity $\bw_N$ to be the piecewise constant interpolation of $\bw^n$. Note that, by definition, $\bw_N$ solves \eqref{ale} with boundary data $\bv^*_N$. 

Using this notation, we combine the two subproblems \eqref{first} and \eqref{second} and then write the weak formulation satisfied by the approximate solutions in monolithic form as follows:
\begin{equation}\begin{split}\label{weakapprox}
&-\int_0^T\int_{\sO}( \tau_{\Delta t} J_N) \partial_t{\tilde\bu}_N\cdot \bq_N-\int_0^T\int_\Gamma \partial_t{\tilde \bv}_N\cdot\bfpsi_N=\frac12\int_0^T\int_\sO \partial_t\tilde J_N \bu_N\cdot\bq_N\\
&-\frac12\int_0^T\int_{\sO}(\tau_{\Delta t}{  J}_N )(( \tau_{\Delta t}\bu_N-
\bw_N)\cdot\nabla^{ \tau_{\Delta t}{\bfeta}_N } \bu_N\cdot\bq
_N- ( \tau_{\Delta t}\bu_N-
\bw_N)\cdot\nabla^{ \tau_{\Delta t}{\bfeta}_N }\bq_N\cdot \bu_N)\\
&-2\nu\int_0^T\int_{\sO}(\tau_{\Delta t}{  J}_N )\bD^{\tau_{\Delta t} {\bfeta}_N }( \bu_N)\cdot \bD^{\tau_{\Delta t} {\bfeta}_N }(\bq_N) -\int_0^T\int_\Gamma\Delta {\bfeta}_N\cdot\Delta \bfpsi_N -\frac1N\int_0^T\int_\Gamma\nabla^3 {\bfeta}_N\cdot\nabla^3 \bfpsi_N\\
&+\gamma\int_0^T\int_\Gamma  \Lambda^{1+s}\bv_N\cdot \Lambda^{1+s}\bfpsi_N,
	\end{split}
\end{equation}
for any $(\bq_N,\bfpsi_N)$ where $\bq_N(t) \in \sV_F^{\bfeta_N}(t)$ and $\bfpsi_N(t)\in \bH^{3}(\Gamma)$ satisfy $\bq_N|_{\Gamma}=\bfpsi_N$. 
Moreover, we have
\begin{align*}
&	\nabla^{\tau_{\Delta t}\bfeta_N}\cdot\bu_N=0,\qquad \bu_N|_{\Gamma}=\bv_N\\
& \bu_N(0,\cdot)=\bu_0,\,\, \bfeta_N(0,\cdot)=\bfeta_0,\,\,\bv_N(0,\cdot)=\bv_0.	
\end{align*}

In the subsequent sections we will show that these sequences are bounded independently of $N$ in certain appropriate spaces which will allow us to extract subsequences converging in weak and strong topologies of appropriate subspaces of the energy space. Using these convergence results we aim to  pass $N\to\infty$ in \eqref{weakapprox}.
\subsection{Uniform estimates}
In this section we will obtain the estimates, uniform in $N$, for the approximate solutions defined in Section \ref{sec:approxsol}.
\begin{theorem}\label{thm:est}
Assume, for some fixed $\alpha>0$, that $\inf_{\sO}J^n>\alpha$ for every $0\leq n\leq N$. Then there exists a constant $C>0$, independent of $N$ and $\ep$, such that
\begin{enumerate}
	\item $E^{n+1} \leq C, E^{n+\frac12} \leq C$, for every $n=0,1,..,N$.
	\item $\sum_{n=0}^{N-1}D^n \leq C$.	
	\item $\sum_{n=0}^{N-1}\left( 	\|\bv^{n+\frac12}-\bv^n\|_{\bL^2(\Gamma)}^2  + \|\bfeta^{n+\frac12}-\bfeta^{n}\|_{\bH^2(\Gamma)}^2+ \frac1{N} \|\bfeta^{n+\frac12}-\bfeta^{n}\|_{\bH^3(\Gamma)}^2\right)  \leq C$.
	\item $\sum_{n=0}^{N-1}\left( 	\int_\sO J^n\left( |\bu^{n+1}-\bu^{n}|^2 \right)  +\|\bv^{n+1}-\bv^{n+\frac12}\|^2_{\bL^2(\Gamma)}\right)  \leq C$,
\end{enumerate} 
where the discrete energy $E_N^n$ and dissipation $D_N^n$ are defined in \eqref{EnDn}.
\end{theorem}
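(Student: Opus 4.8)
The plan is to derive all four bounds from a single telescoping inequality obtained by adding the two semidiscrete energy estimates \eqref{energy1} and \eqref{energy2} supplied by Theorem~\ref{ExistenceSubProblems}; no Gr\"onwall argument is needed because the splitting scheme \eqref{first}--\eqref{second} is ``unconditionally stable'' by design -- the convective term in the fluid subproblem is skew-symmetrized and the Jacobian (Reynolds-transport) terms are grouped so that testing each subproblem against its own solution produces no positive term on the right-hand side. First I would chain the two inequalities: substituting \eqref{energy2} into \eqref{energy1} and cancelling the common quantity $E^{n+\frac12}$ (which carries the same Jacobian $J^n$ in both places, so the cancellation is exact) yields, for every $0\le n\le N-1$,
\begin{equation*}
\begin{split}
E^{n+1}+D^n
&+\tfrac12\int_\sO J^n|\bu^{n+1}-\bu^n|^2
+\tfrac12\|\bv^{n+1}-\bv^{n+\frac12}\|_{\bL^2(\Gamma)}^2\\
&+\tfrac12\|\bv^{n+\frac12}-\bv^n\|_{\bL^2(\Gamma)}^2
+\tfrac12\|\bfeta^{n+\frac12}-\bfeta^n\|_{\bH^2(\Gamma)}^2
+\tfrac1{2N}\|\nabla^3(\bfeta^{n+\frac12}-\bfeta^n)\|_{\bL^2(\Gamma)}^2
\ \le\ E^n .
\end{split}
\end{equation*}

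\textbf{Summation.} Summing this over $n=0,\dots,m-1$ for an arbitrary $m\le N$ and discarding nonnegative terms as needed, the $E^n$ telescope and one gets $E^m+\sum_{n=0}^{m-1}(\text{everything above})\le E^0$. Choosing $m=n+1$ gives $E^{n+1}\le E^0$, and then \eqref{energy1} gives $E^{n+\frac12}\le E^n\le E^0$, which is item~(1); choosing $m=N$ and retaining only the appropriate summands gives items~(2) and~(4) verbatim. Item~(3) follows likewise, except that \eqref{energy1} controls only the seminorm $\|\nabla^3(\bfeta^{n+\frac12}-\bfeta^n)\|_{\bL^2(\Gamma)}$ of the increment; the missing lower-order part is handled by the identity $\bfeta^{n+\frac12}-\bfeta^n=\Delta t\,\bv^{n+\frac12}$ from the second equation of \eqref{first}, together with $\|\bv^{n+\frac12}\|_{\bL^2(\Gamma)}\le C$ from~(1), so that $\tfrac1N\|\bfeta^{n+\frac12}-\bfeta^n\|_{\bL^2(\Gamma)}^2\le C(\Delta t)^2/N$ and summing over $n$ costs only $O(N^{-2})$. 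As immediate by-products one records the $N$-independent bounds $\|\bfeta^{n+1}\|_{\bH^2(\Gamma)},\,\|\bv^{n+1}\|_{\bL^2(\Gamma)}\le C$, $\alpha\|\bu^{n+1}\|_{\bL^2(\sO)}^2\le\int_\sO J^{n+1}|\bu^{n+1}|^2\le C$, and, through the change of variables to $\sO_{\bfeta^n}$ and the uniform Korn inequality of Lemma~1 in \cite{V12}, $\Delta t\sum_{n}\|\bu^{n+1}\|_{\bH^1(\sO)}^2\le C$ and $\Delta t\sum_n\|\bv^{n+1}\|_{\bH^{1+s}(\Gamma)}^2\le C$ (these last using again the standing hypothesis $\inf_\sO J^n>\alpha$).

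\textbf{Initial energy and the main subtlety.} It remains to bound $E^0=\tfrac12\big(\int_\sO J^0|\bu_0|^2+\|\bv_0\|_{\bL^2(\Gamma)}^2+\|\bfeta_0\|_{\bH^2(\Gamma)}^2+\tfrac1N\|\bfeta_0\|_{\bH^3(\Gamma)}^2\big)$ independently of $N$: the first three terms are part of the given data of Theorem~\ref{thm:exist} (noting $\int_\sO J^0|\bu_0|^2=\|\bu_0\|_{\bL^2(\sO_{\bfeta_0})}^2$), while the last is harmless when $s=1$, since then $\bfeta_0\in\bH^3(\Gamma)$, and is otherwise absorbed by running the scheme from a regularized datum $\bfeta_0^N\in\bH^3(\Gamma)$ with $\bfeta_0^N\to\bfeta_0$ in $\bH^{2+s}(\Gamma)$ and $\tfrac1N\|\bfeta_0^N\|_{\bH^3(\Gamma)}^2\to0$. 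The point I expect to matter most here is not any of these computations -- they are routine once Theorem~\ref{ExistenceSubProblems} is available -- but rather that this whole estimate is \emph{conditional} on the standing assumption $\inf_\sO J^n>\alpha$ for all $0\le n\le N$, which is precisely what is needed to invoke part~(2) of Theorem~\ref{ExistenceSubProblems}. Making that assumption self-consistent -- i.e.\ propagating $\inf_\sO J^n>\alpha$ uniformly in $N$ over a strictly positive time horizon $[0,T_0]$ -- is the genuinely delicate step, and it is where the discrete counterpart of the spatial regularity result of Theorem~\ref{thm:Lip1} (cf.\ Theorem~\ref{thm:estN}) must be brought to bear; within the statement of the present theorem, however, that bound is taken as a hypothesis and the proof is completed by the telescoping above.
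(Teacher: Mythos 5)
Your proof is correct and follows essentially the same route as the paper's: add the two semidiscrete energy estimates \eqref{energy1} and \eqref{energy2}, cancel $E^{n+\frac12}$, and telescope the sum to bound everything by $E^0$. You are in fact more careful than the paper on two minor points — controlling the full $\bH^3$ norm of the increment in item~(3) rather than just the leading seminorm appearing in \eqref{energy1}, and noting that the $\tfrac1N\|\bfeta_0\|_{\bH^3(\Gamma)}^2$ contribution to $E^0$ requires either $s=1$ or a regularized initial datum since Theorem~\ref{thm:exist} only assumes $\bfeta_0\in\bH^{2+s}(\Gamma)$ — but the underlying argument is the paper's.
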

\begin{proof}
	For a fixed $N\in\mathbb{N}$, we add the energy estimates for the two subproblems \eqref{energy1} and \eqref{energy2}, sum over $m\geq 1$, summing $0\leq n\leq m-1$ and then take supremum over $1 \leq m \leq N$. This gives us
	\begin{equation}\label{energysum}
		\begin{split}
			\sup_{1\leq m\leq N}E_N^m+\sum_{n=0}^{N-1}D_N^{n} +\sum_{n=0}^{N-1}C_N^{n}
			&\leq E^0,
	\end{split}\end{equation}
where,
$$	C^n_N:= \|\bv^{n+\frac12}-\bv^n\|_{\bL^2(\Gamma)}^2  + \|\bfeta^{n+\frac12}-\bfeta^{n}\|_{\bL^2(\Gamma)}^2 +
\int_\sO J^n\left( |\bu^{n+1}-\bu^{n}|^2 \right) +\|\bv^{n+1}-\bv^{n+\frac12}\|^2_{\bL^2(\Gamma)}.$$

\end{proof}
Next, we obtain uniform bounds for the approximate structure displacements and fluid velocity.
\begin{theorem}\label{thm:estN}
There exists $T_0>0$ such that for any $0<\delta<s$, 
\begin{enumerate}
	\item The sequences $\{{\bfeta}_{N}\}, \{{\tilde\bfeta}_{N}\}$ are bounded, independently of $N$, in $ L^\infty(0,T_0;\bH^{2+\delta}(\Gamma))$.
	\item The sequence $		\{\bA_{\bfeta_N}\}$ is bounded, independently of $N$ in ${L^\infty(0,T_0;\bC^{1,\delta}(\bar\sO))}$ and for some $\alpha>0$,
	the sequence of approximate Jacobians satisfies $\inf_{\sO \times(0,T)}J_{\bfeta_N}>\alpha$,  for all $N$.
	\item The sequence $\{\bu_{N}\}$ is bounded, independently of $N$, in 	$L^2(0,T_0;\bH^1(\sO))\cap L^\infty(0,T_0;\bL^2(\sO)).$ 
\end{enumerate}
\end{theorem}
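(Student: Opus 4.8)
The plan is to regard Theorem~\ref{thm:estN} as the time-discrete counterpart of Theorems~\ref{thm:Lip1} and~\ref{thm:est}: item~(3) is the discrete version of Lemma~\ref{uest}, while items~(1)--(2) follow from re-running the hidden spatial regularity argument of Section~\ref{sec:spatial_reg} on the monolithic approximate formulation \eqref{weakapprox}, closed by a time bootstrap that is uniform in $N$. I would fix $\alpha>0$, $C_0>0$ and, for each $N$, work on the largest index $m=m(N)\le N$ for which
\[
\|\bA_{\bfeta_N}\|_{L^\infty(0,t^m;\bW^{1,\infty}(\sO))}\le 2C_0,\qquad \inf_{\sO\times(0,t^m)}J_{\bfeta_N}>\alpha.
\]
On $[0,t^m]$ Theorem~\ref{thm:est} applies and gives, uniformly in $N$, $E_N^n,E_N^{n+\frac12}\le C$, $\sum_n D_N^n\le C$ and the numerical-dissipation bounds of its items~(3)--(4); in particular $\{\bfeta_N\}$ is bounded in $L^\infty(0,t^m;\bH^2(\Gamma))$, $\{\bv_N\}$ in $L^2(0,t^m;\bH^{1+s}(\Gamma))$, $\{\bv_N^*\}$ in $L^\infty(0,t^m;\bL^2(\Gamma))$, $\{\bu_N\}$ in $L^\infty(0,t^m;\bL^2(\sO))$, and the $\bH^1(\sO)$ bound for $\bu_N$ follows from $\sum_n D_N^n\le C$ and the uniform Korn inequality of \cite{V12}, exactly as in the proof of Lemma~\ref{uest}. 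This yields item~(3) once the bootstrap is closed.

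\textbf{The discrete hidden regularity estimate.}
For $0<\delta<s$ put $\kappa=1-(s-\delta)$ as in \eqref{kappa} and test \eqref{weakapprox} with the discrete analogue of the pair constructed in Section~\ref{sec:spatial_reg}:
\[
\bfpsi_N=-\big(\Lambda^{2\kappa}\bfeta_N-c_N\bfxi\big),\qquad \bq_N=-J_N^{-1}\nabla\bA_{\bfeta_N}\,\bfvarphi_N,
\]
where $c_N$ is the constant \eqref{defc} with $\bfeta$ replaced by $\bfeta_N$ and $\bfvarphi_N$ extends $J_{\bfeta_N}(\nabla\bA_{\bfeta_N})^{-1}|_\Gamma(\Lambda^{2\kappa}\bfeta_N-c_N\bfxi)$ divergence-free on $\sO$ (e.g.\ by solving the Stokes system \eqref{stokes} on each time sub-interval, where the data is constant in time, or by a Bogovskii-type correction), with $\bfvarphi_N$ controlled as in \eqref{reg_stokes}--\eqref{varphi2}. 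The $\bH^3$-regularization in \eqref{first} supplies enough elliptic regularity on each $\bfeta^{n+\frac12}=\bfeta^{n+1}$ that $\bfpsi_N(t)\in\bH^3(\Gamma)$ and $\bq_N(t)\in\sV^{\bfeta_N}_F(t)$, so $(\bq_N,\bfpsi_N)$ is an admissible test pair in \eqref{weakapprox}, with $\bq_N|_\Gamma=\bfpsi_N$ and $\mathrm{div}^{\bfeta_N}\bq_N=0$ by the Piola identity. Because Proposition~\ref{phi_trace_est} and the bounds \eqref{q1}--\eqref{qt} use only the energy-level regularity of the displacement, they transfer verbatim with $\bfeta$ replaced by $\bfeta_N$ and control $\bfvarphi_N,\bq_N$ by $\|\bfeta_N\|_{L^\infty_t\bH^{2+\delta}}^{1/2}\|\bfeta_N\|_{L^2_t\bH^{2+\kappa}}$. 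Inserting $(\bq_N,\bfpsi_N)$ into \eqref{weakapprox} and performing the discrete summation by parts in time reproduces the structure of \eqref{testing}: the good terms $\sup_n\|\Lambda^{1+\kappa+s}\bfeta_N^n\|_{\bL^2}^2$ and $\sum_n\Delta t\,\|\Lambda^{2+\kappa}\bfeta_N^n\|_{\bL^2}^2$, the bounded terms $I_1,\dots,I_4$ (from the energy bound \eqref{apriori} and the smoothness of $\bfxi$), and the fluid terms $I_5,\dots,I_8$ (estimated as in Section~\ref{sec:spatial_reg} via \eqref{q1}--\eqref{qt} and Lemma~\ref{uest}; the term that in the continuous proof absorbed the $\partial_t\bq$-estimate \eqref{qt} is here recovered by summation by parts in $n$, which shifts the burden onto $\bq_N^{n+1}-\bq_N^n$ and requires care).

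\textbf{The new terms and the main obstacle.}
Two families of terms have no analogue in the continuous computation. The regularization term $\tfrac1N\int_0^T\int_\Gamma\nabla^3\bfeta_N\cdot\nabla^3\bfpsi_N$ equals, up to a correction of size $O(N^{-1/2})$ from $c_N\bfxi$ (recall $\tfrac1N\|\bfeta_N\|_{\bH^3}^2\le 2E_N^n\le C$), the non-positive quantity $-\tfrac1N\int_0^T\int_\Gamma|\nabla^3\Lambda^{\kappa}\bfeta_N|^2$, so it helps or is harmless. The genuinely new and main difficulty is the splitting mismatch: $\partial_t\tilde\bfeta_N=\bv_N^*$ is $\bv^{n+\frac12}$ whereas the trace of $\bu_N$ on $\Gamma$ is $\bv^{n+1}$, so writing $\bv^{n+1}-\bv^n=(\bv^{n+1}-\bv^{n+\frac12})+(\bv^{n+\frac12}-\bv^n)$ produces, besides the increment $\bv^{n+\frac12}-\bv^n$ that combines with the structure step \eqref{first} to build the $\Lambda^{2+\kappa}$-coercivity, an extra term $\sum_n\int_\Gamma(\bv^{n+1}-\bv^{n+\frac12})\cdot\bfpsi_N$. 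Handling this is exactly the purpose of the $\bH^3$-regularization in \eqref{first}: one absorbs it into a small multiple of $\sum_n\Delta t\,\|\Lambda^{2+\kappa}\bfeta_N^n\|_{\bL^2}^2$ together with the numerical dissipations $\sum_n\|\bv^{n+1}-\bv^{n+\frac12}\|_{\bL^2}^2$ and $\sum_n\tfrac1N\|\bfeta^{n+\frac12}-\bfeta^n\|_{\bH^3}^2$ from Theorem~\ref{thm:est}(3)--(4), trading the surplus spatial smoothness of $\Lambda^{2\kappa}\bfeta_N$ against the $\Delta t$ hidden in those discrete dissipations. Carrying out these absorptions is the delicate part of the argument; the outcome is the discrete analogue of \eqref{reg1},
\[
\|\bfeta_N\|_{L^\infty(0,t^m;\bH^{2+\delta}(\Gamma))}^2+\|\bfeta_N\|_{L^2(0,t^m;\bH^{2+\kappa}(\Gamma))}^2\le K_2+\|\bfeta_0\|_{\bH^{2+\delta}(\Gamma)}^2,
\]
with $K_2$ depending only on $C_0$, $\alpha$ and the data.

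\textbf{Closing the bootstrap.}
Finally I would argue as at the end of Section~\ref{sec:spatial_reg}: from $\tilde\bfeta_N(t)-\bfeta_0=\int_0^t\bv_N^*$ and $\|\bv_N^*\|_{L^\infty_t\bL^2}\le C$, interpolation between $\bH^{2+\delta}$ and $\bL^2$ gives, as in \eqref{smalltime}, $\|\bA_{\bfeta_N}-\bA_{\bfeta_0}\|_{L^\infty(0,t^m;\bW^{1,\infty}(\sO))}\le CK_2^{\beta}C_1^{1-\beta}(t^m)^{1-\beta}$ with $\beta$ as there, and the matching lower bound on $J_{\bfeta_N}$ as in \eqref{JacBound}. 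Hence there is $T_0>0$, independent of $N$, such that $t^m\le T_0$ forces $\|\bA_{\bfeta_N}\|_{L^\infty(0,t^m;\bW^{1,\infty})}\le C_0$ and $\inf J_{\bfeta_N}>\alpha$ with strict inequalities, so the hypothesis cannot saturate before $T_0$ and therefore holds on $[0,T_0]$ for every $N$. This gives item~(1) for $\bfeta_N$, hence for $\tilde\bfeta_N$ (each value of which is a convex combination of two values of $\bfeta_N$), and item~(2) via \eqref{boundJ}--\eqref{bound_injective} and Proposition~\ref{equiv} applied to $\bfeta_N$ (using the uniform $\bH^{2+\delta}$ bound and the just-established smallness of $\bA_{\bfeta_N}-\bA_{\bfeta_0}$); item~(3) then follows from Theorem~\ref{thm:est}(1)--(2) on $[0,T_0]$. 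Throughout, the only real difficulty is the one isolated above: making the Section~\ref{sec:spatial_reg} construction legitimate inside the split, discretized scheme and absorbing the mismatch $\bv_N^*\neq\bu_N|_\Gamma$ using the numerical dissipation --- which is precisely why the $\bH^3$-regularizing term was placed into \eqref{first}.
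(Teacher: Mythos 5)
Your high‑level strategy coincides with the paper's: run the Section~\ref{sec:spatial_reg} construction of $(\bq_N,\bfpsi_N)$ inside the discrete scheme, close with the bootstrap of \eqref{smalltime}--\eqref{JacBound}, and obtain item~(3) from Theorem~\ref{thm:est} plus the uniform Korn inequality. You also correctly flag the essential novelty — the splitting‑induced mismatch between $\bv_N$ and $\bv^*_N=\partial_t\tilde\bfeta_N$ — and that the $\frac1N\|\cdot\|_{\bH^3}^2$ regularization is what pays for it.

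However, the way you locate and absorb that mismatch does not hold up, and it is not how the paper closes the estimate. You single out the kinetic increment and claim the leftover piece $\sum_n\int_\Gamma(\bv^{n+1}-\bv^{n+\frac12})\cdot\bfpsi_N$ can be absorbed into $\sum_n\Delta t\,\|\Lambda^{2+\kappa}\bfeta^{n+1}_N\|_{\bL^2}^2$ together with the numerical dissipations. That sum carries \emph{no} $\Delta t$, while the coercive term does; a Cauchy--Schwarz/Young argument then costs a factor $\Delta t^{-1}$ and the bound blows up as $N\to\infty$ (one gets $\sum_n\|\bv^{n+1}-\bv^{n+\frac12}\|_{\bL^2}\|\Lambda^{2\kappa}\bfeta^{n+1}\|_{\bL^2}\lesssim\sqrt{N}$ from $\sum_n\|\bv^{n+1}-\bv^{n+\frac12}\|^2_{\bL^2}\le C$). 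So the absorption as you describe it fails. The paper instead keeps the $-\int\partial_t\tilde\bv_N\cdot\bfpsi_N$ term intact and isolates the mismatch where it is actually damaging: in the fractional‑damping pairing $\gamma\int\Lambda^{1+s}\bv_N\cdot\Lambda^{1+s}\bfpsi_N$. There the decomposition \eqref{mismatch} splits off the telescoping good term $\gamma\int\Lambda^{1+s}\partial_t\tilde\bfeta_N\cdot\Lambda^{1+s+2\kappa}\tilde\bfeta_N$ plus two error terms, one proportional to $\|\bv_N-\bv^*_N\|_{L^2_t\bL^2}$ (which carries a $\sqrt{\Delta t}$) and one involving $\bfeta_N-\tilde\bfeta_N$. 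The genuinely subtle point, which your write‑up omits, is the estimate
\[
\int_0^{T^{max}_N}\|\bfeta_N-\tilde\bfeta_N\|^2_{\bH^3}
=\sum_n\|\bfeta^{n+1}-\bfeta^n\|^2_{\bH^3}\int_{t^n}^{t^{n+1}}\Big(\tfrac{t-t^n}{\Delta t}\Big)^2dt\le CT,
\]
which hinges on $\sum_n\frac1N\|\bfeta^{n+1}-\bfeta^n\|^2_{\bH^3}\le C$ from Theorem~\ref{thm:est}(3); without this the second error term in \eqref{mismatch} is not controllable. You cite that dissipation but do not carry out the calculation, and the term you propose to absorb is not the one it controls. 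I would also note that you should not solve \eqref{stokes} on each time sub‑interval with constant data: the paper uses a single Stokes problem whose boundary data is the piecewise‑constant‑in‑time $J_{\bfeta_N}(\nabla\bA_{\bfeta_N})^{-1}|_\Gamma(\Lambda^{2\kappa}\bfeta_N-c_N\bfxi_N)$, which is what the trace space $\mathcal G^m(\Gamma\times(0,T))$ is built to accommodate. Finally, your bootstrap closure is fine but should be completed, as the paper does, with the short contradiction argument showing $T_N^{max}\ge T_0$ for large $N$ (otherwise the approximate scheme could stop before $T_0$).
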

\begin{proof}
We begin by observing that since $\tilde\bfeta_N$ and $\bfeta_N$ belong to $L^\infty(0,T;\bH^3(\Gamma))$, until some time, depending on $N$, the condition $\inf_{\sO\times(0,T)}J_N >\alpha$ must be satisfied.
We define ${T}_{N}^{max}$ to be maximal interval on which this lower bound for the approximate Jacobian  exists and hence the approximate solutions $(\bu_N,\tilde\bfeta_N)$ are defined, i.e. ${T}_{N}^{max}$ is a maximal time-interval such that assumptions of Theorem \ref{ExistenceSubProblems} hold. 
	
	First, we prove Statement (1). The proof of Statement (1) will rely heavily on the results of Theorem \ref{thm:Lip1}. 
	We first find uniform bounds for the structure displacement until time $T^{max}_N:=n^{max}_N(\frac{T}N)>0$, for some $n^{max}_N\in\mathbb{N}$ dependent on $N$. 
	For this purpose we reproduce the construction of the test functions and other important details from Theorem \ref{thm:Lip1} for the semi-discrete case.
	We take 
	\begin{align}\label{Lip_testN}
		\bq_N = -J_{\bfeta_N}^{-1}\nabla \bA_{\bfeta_N}\,\bfvarphi_N,\qquad \bfpsi_N = -(\Delta^\kappa\bfeta_N - c_N\bfxi_N),
	\end{align}
	where
	$\bfvarphi_N$ is the solution of \eqref{stokes} with boundary data $ \bfvarphi|_{\Gamma}  = J_{\bfeta_N} (\nabla \bA_{\bfeta_N})^{-1}|_{\Gamma}\left( \Delta^\kappa\bfeta_N - c_N\bfxi_N\right)$, where
		\begin{align}
		{c}_N
		= {\int_{\Gamma}\nabla\bfeta_N \times \Delta^\kappa\bfeta_N},
	\end{align}
	and $\bfxi_N$ is a smooth function satisfying ${\int_{\Gamma}\nabla\bfeta_N\times\bfxi_N}=1$ for every $t\in[0,T]$.

We use this test pair $(\bq_N,\bfpsi_N)$, defined in \eqref{Lip_testN}, in the weak formulation \eqref{weakapprox} on the time interval $(0,T^{max}_N)$ and follow, with some modifications, the steps presented in the proof of Theorem \ref{thm:Lip1}.
	
Observe that, $\bv_N$ is not equal to the structure velocity $\partial_t\tilde\bfeta_N$. Due to this mismatch caused by time-discretization and splitting, the term $\int_0^{T^{max}_N}\int_\Gamma  \Lambda^{{1+s}}\bv_N\cdot \Lambda^{1+s}\bfpsi_N $, appearing in \eqref{weakapprox} requires explanation. To that end, we write
	\begin{equation}
\begin{split}\label{mismatch}
	&	\gamma\int_0^{T^{max}_N}\int_\Gamma \Lambda^{1+s}\bv_N\cdot	\Lambda^{1+s}\bfpsi_N=	\gamma\int_0^{T^{max}_N}\int_\Gamma  \Lambda^{{1+s}}\partial_t\tilde\bfeta_N\cdot \Lambda^{{1+s}+2\kappa}\tilde\bfeta_N	\\
	&+	\gamma\int_0^{T^{max}_N}\int_\Gamma  \Lambda^{{1+s}}(\bv_N-\bv^*_N)\cdot  \Lambda^{{1+s}}\bfpsi_N		+\gamma\int_0^{T^{max}_N}\int_\Gamma  \Lambda^{{1+s}}\bv^*_N\cdot  \Lambda^{1+s+2\kappa}(\bfeta_N-\tilde\bfeta_N).
\end{split}
 \end{equation}
Observe that the first term on the right-hand side produces the desired $L^\infty(0,T^{max}_N;\bH^{2+\delta}(\Gamma))$-norm of $\tilde\bfeta_N$ in Statement (1). We will now show that the remaining two terms are bounded.
Thanks to integration-by-parts and the bounds on numerical dissipation obtained in Theorem \ref{thm:est}, we obtain
\begin{align*}
|	\gamma\int_0^{T^{max}_N}\int_\Gamma  \Lambda^{{1+s}}(\bv_N-\bv^*_N)\cdot  \Lambda^{{1+s}}\bfpsi_N	|&= |	\gamma\int_0^{T^{max}_N}\int_\Gamma  (\bv_N-\bv^*_N)\cdot  \Lambda^{2+2s}\bfpsi_N|\\
&\leq \|\bv_N-\bv^*_N\|_{L^2(0,T;\bL^2(\Gamma))}\|\bfeta_N\|_{L^2(0,T;\bH^3(\Gamma))}	\\
&\leq\left(  (\Delta t)\sum_{n=0}^{n^{max}_N-1}	\|\bv^{n+\frac12}-\bv^n\|_{\bL^2(\Gamma)}^2 \right) ^{\frac12}\|\bfeta_N\|_{L^2(0,T;\bH^3(\Gamma)).}
\end{align*}
Similarly, to bound the second term on the right hand side of \eqref{mismatch}, we first note, using Theorem \ref{thm:est} (3), that
\begin{align*}	\int_0^{T^{max}_N}\|\bfeta_{N}-\tilde\bfeta_{N}\|^2_{\bH^3(\Gamma)}dt
	&=
	\sum_{n=0}^{n^{max}_N-1}\int_{t^n}^{t^{n+1}}\frac1{\Delta t}\|(t-t^n)\bfeta^{n+1}+(t^{n+1}-t-\Delta t)\bfeta^{n}\|^2_{\bH^3(\Gamma)}dt\\
	&=\sum_{n=0}^{n^{max}_N-1}\|\bfeta^{n+1}-\bfeta^n\|_{\bH^3(\Gamma)}^2\int_{t^n}^{t^{n+1}} \left( \frac{t-t^n}{\Delta t}\right) ^2dt\\
	&\leq \frac{CT}{ N}\cdot N =CT.
\end{align*}
This gives us,
\begin{align*}
|	\gamma\int_0^{T^{max}_N}\int_\Gamma  \Lambda^{{1+s}}\bv^*_N\cdot  \Lambda^{1+s+2\kappa}(\bfeta_N-\tilde\bfeta_N)|& \leq \|\bv^*_N\|_{L^2(0,T^{max}_N;\bH^{1+s}(\Gamma))}\|\bfeta_N-\tilde\bfeta_N\|_{L^2(0,T^{max}_N;\bH^3(\Gamma))}\\
&\leq C.
\end{align*}
The rest of the terms in the formulation \eqref{weakapprox} with the test function \eqref{Lip_testN} follow the bounds obtained in the proof of Theorem \ref{thm:Lip1}.
Hence, the proof leading up to \eqref{reg1}, gives us that
\begin{align}
	\|\tilde\bfeta_N\|^{2}_{L^\infty(0,{ T^{max}_N};\bH^{2+\delta}(\Gamma))} +	\|\bfeta_N\|^{2}_{L^2(0,T^{max}_N;\bH^{3-(s-\delta)}(\Gamma))} \leq K_2 + \|\bfeta_0\|^2_{\bH^{2+\delta}(\Gamma)},\label{reg2}
\end{align}
where $K_2$ depends on $\|\bA_{\bfeta_N}\|_{L^\infty(0,T^{max}_N;\bW^{1,\infty}(\sO))}$ and $\inf_{\sO\times(0,T^{max}_N)}J_{\bfeta_N}$.
Now we use the continuity argument to get rid of this dependence of $K_2$ on the LHS norms by possibly reducing the 
length of the time interval. 

 Let $C_0>0$ and $\alpha >0$  be such that $\|\bA_{\bfeta_0}\|_{\bW^{1,\infty}(\sO)}<2C_0$ and $\inf_\sO J_0>\alpha$.
We hypothesize that
\begin{align}\label{hypo2}
	\|\bA_{\tilde\bfeta_N}\|_{L^\infty(0,T;\bW^{1,\infty}(\sO))} \leq 2C_0 \quad\text{and}\quad \inf_{\sO \times(0,T)}J_{\tilde\bfeta_N}>\alpha.
\end{align}
 Now we choose $T_0$ such that expression on the right hand side of \eqref{smalltime} is smaller then $C_0$, and the expression on the RHS of \eqref{JacBound} greater then $2\alpha$. We define,
$$T_N=\min\{T_0,{T}_{N}^{max}\}.$$
Then, thanks to the Sobolev embeddings used in \eqref{smalltime}, we conclude that 
\begin{align}\label{conc2}
	\|\bA_{\tilde\bfeta_N}\|_{L^\infty(0,{ T_N};\bW^{1,\infty}(\sO))} \leq C_0 \quad\text{and}\quad \inf_{\sO \times(0,T_N)}J_{\tilde\bfeta_N}>2\alpha.
\end{align}
Now, owing to the fact that by construction $\tilde\bfeta_N$ and thus $\bA_{\tilde\bfeta_N}$ are continuous in time, we can show that the conditions (a)-(d) of the Bootstrap principle  \cite[Propostion 1.21]{TaoDisspersive} are satisfied. Hence we have proven that, for any $0<\delta<s$, the sequence $\{{\tilde\bfeta}_{N}\}$ is bounded, independently of $N$, in $ L^\infty(0,T_{N};\bH^{2+\delta}(\Gamma))$.

To finish the proof of Statement (1) in Theorem \ref{thm:estN}, we show that
	there exists $N_0$ such that
	\begin{align}\label{timelowerbound}
		T_{N}=T_0 \quad\text{for all}\quad N\geq N_0.
	\end{align}
We prove \eqref{timelowerbound} by contradiction. Assume that \eqref{timelowerbound} is not true.
Recall that, by \eqref{conc2} we have that $\inf_{\sO} J_{\tilde\bfeta_N}(t)>2\alpha$ for every $t\in [0,T_N^{max}]$ where $T_N^{max}=n_N^{max}\Delta t$. 
However, then for small enough $\Delta t$ we can prolong the approximate solution, i.e. we can obtain $J^{n_N^{max}+1}>\alpha$ which contradicts maximality of $T_N^{max}$. 

Hence we have now shown that, for any $0<\delta<s$ the sequence $\{{\tilde\bfeta}_{N}\}$ is bounded, independently of $N$, in $ L^\infty(0,T_{0};\bH^{2+\delta}(\Gamma))$.

The rest of the statements follow thanks to \eqref{ale_Hs} and Lemma \ref{uest}.
\end{proof}

Thanks to Theorems \ref{thm:est} and \ref{thm:estN}, we can immediately conclude that there exist $\bfeta\in C(0,T;\bH^{2+\delta}(\Gamma)) \cap H^1(0,T;\bH^{1+s}(\Gamma))$ for some $0<\delta<s$, $\bu\in L^\infty(0,T;\bL^2(\sO))\cap L^2(0,T;\bH^1(\sO))$ and $\bv\in L^\infty(0,T;\bL^2(\Gamma))\cap L^2(0,T;\bH^{1+s}(\Gamma))$ such that the following weak and weak$^*$ convergence results hold, up to a subsequence, as $N\to \infty$:
\begin{enumerate}
	\item ${\bfeta}_{N} \rightharpoonup \bfeta $ weakly in  $ L^\infty(0,T_0;\bH^{2+\delta}(\Gamma))$ for any $0<\delta<s$.
	\item $\tilde\bfeta_{N} \rightharpoonup \bfeta$ weakly in  $C(0,T_0;\bH^{2+\delta}(\Gamma))$ for any $0<\delta<s$.
	\item $\bu_{N} \rightharpoonup \bu$ weakly in 	$L^2(0,T_0;\bH^1(\sO))$ and weakly$^*$ in $L^\infty(0,T_0;\bL^2(\sO)).$
	\item $\bv_{N} \rightharpoonup \bv$ weakly in  $ L^2(0,T_0;\bH^{1+s}(\Gamma))$ and weakly$^*$ in $L^\infty(0,T_0;\bL^2(\Gamma))$.
	\item $\bv^*_{N} \rightharpoonup \bv$ weakly$^*$ in $L^\infty(0,T_0;\bL^2(\Gamma))$.
\end{enumerate}  
Furthermore, 
$$\partial_t\bfeta=\bv\quad a.e. \text{ in } \sO\times(0,T).$$

We now seek to upgrade these results to strong convergence results to be able to pass to the limit in our nonlinear problem. 
In particular,  due to the geometric nonlinearity introduced in the fluid equations via the ALE maps associated with the motion of the fluid domain and its boundary,
we will require stronger convergence result for the structure displacements to be able to pass $N\to\infty$ in the approximate weak formulation.
Thus, we start with the following result on strong convergence of structure displacements.

\begin{proposition}\label{prop:strongeta}
There exists a  subsequence $\{\bfeta_N\}$ of approximate structure displacements  such that 
 \begin{align}\label{strongeta}
 	\bfeta_N \to \bfeta\quad  \text{ strongly in } L^\infty(0,T_0;\bH^{2+\delta}(\Gamma))\quad\text{ for any }  0\leq \delta<s .
 \end{align}
\end{proposition}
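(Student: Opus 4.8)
The plan is to deduce \eqref{strongeta} from a compactness argument of Aubin--Lions--Simon type, taking advantage of the fact that Theorem~\ref{thm:estN}(1) bounds $\{\tilde\bfeta_N\}$ in $L^\infty(0,T_0;\bH^{2+\delta'}(\Gamma))$ for \emph{every} $\delta'<s$ --- that is, with a margin of spatial regularity strictly above the target space $\bH^{2+\delta}(\Gamma)$. So, fixing $0\le\delta<s$, I would first pick $\delta'$ with $\delta<\delta'<s$ and record the two ingredients: $\{\tilde\bfeta_N\}$ is bounded in $L^\infty(0,T_0;\bH^{2+\delta'}(\Gamma))$ by Theorem~\ref{thm:estN}(1), while $\partial_t\tilde\bfeta_N=\bv^*_N$ is bounded in $L^\infty(0,T_0;\bL^2(\Gamma))$ because $E^{n+\frac12}\le C$ controls $\|\bv^{n+\frac12}\|_{\bL^2(\Gamma)}$ (Theorem~\ref{thm:est}(1)).

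The core step is then to verify equicontinuity in time: for $0\le r\le t\le T_0$ the bound $\|\tilde\bfeta_N(t)-\tilde\bfeta_N(r)\|_{\bL^2(\Gamma)}\le |t-r|\,\|\bv^*_N\|_{L^\infty(0,T_0;\bL^2(\Gamma))}$ interpolates with the uniform $\bH^{2+\delta'}$-bound to give
\[
\|\tilde\bfeta_N(t)-\tilde\bfeta_N(r)\|_{\bH^{2+\delta}(\Gamma)}\le C\,|t-r|^{\theta}, \qquad \theta=\tfrac{\delta'-\delta}{2+\delta'},
\]
uniformly in $N$; combined with the pointwise-in-$t$ relative compactness of $\{\tilde\bfeta_N(t)\}_N$ in $\bH^{2+\delta}(\Gamma)$ (via the compact embedding $\bH^{2+\delta'}(\Gamma)\hookrightarrow\hookrightarrow\bH^{2+\delta}(\Gamma)$), the Arzel\`a--Ascoli theorem yields a subsequence converging in $C([0,T_0];\bH^{2+\delta}(\Gamma))$, necessarily to the $\bfeta$ already identified as the weak limit.

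It then remains to transfer the convergence to the piecewise-constant interpolants: from the structure subproblem \eqref{first} one has $\bfeta^{n+1}-\bfeta^n=(\Delta t)\bv^{n+\frac12}$ in $\bL^2(\Gamma)$, hence $\|\bfeta_N-\tilde\bfeta_N\|_{L^\infty(0,T_0;\bL^2(\Gamma))}\le C\Delta t$; interpolating once more with the uniform $\bH^{2+\delta'}$-bound shows $\|\bfeta_N-\tilde\bfeta_N\|_{L^\infty(0,T_0;\bH^{2+\delta}(\Gamma))}\to0$, so $\bfeta_N\to\bfeta$ strongly in $L^\infty(0,T_0;\bH^{2+\delta}(\Gamma))$ as well. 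A diagonal extraction over a sequence $\delta_k\uparrow s$ finally produces a single subsequence valid for all $0\le\delta<s$.

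I do not expect a serious obstacle here: the argument is essentially routine once the enhanced regularity is in hand, and the only genuinely essential point is that Theorem~\ref{thm:Lip1}/\ref{thm:estN} upgrades the energy-level regularity $L^\infty_tH^2_x$ of the displacement to $L^\infty_tH^{2+\delta}_x$, which is exactly what creates the compactness margin above $\bH^{2+\delta}(\Gamma)$ --- the bare energy estimate gives no compactness in $\bH^2(\Gamma)$. The mild subtlety is obtaining $L^\infty$-in-time (rather than merely $L^2$-in-time) convergence, which is why I would run Arzel\`a--Ascoli rather than plain Aubin--Lions, using the uniform Lipschitz-in-time control of $\tilde\bfeta_N$ with values in $\bL^2(\Gamma)$.
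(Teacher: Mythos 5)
Your argument is correct and coincides with the paper's proof in substance: the paper invokes the Aubin--Lions--Simon compactness embedding $L^\infty(0,T;\bH^{2+\ep}(\Gamma))\cap W^{1,\infty}(0,T;\bL^2(\Gamma))\subset\subset C([0,T];\bH^{2+\delta}(\Gamma))$ applied to $\tilde\bfeta_N$ (using the enhanced bound from Theorem~\ref{thm:estN}(1) and the uniform $\bL^2$-bound on $\bv^*_N=\partial_t\tilde\bfeta_N$), then transfers to $\bfeta_N$ by comparing \eqref{AppSol} with \eqref{approxlinear}, which is exactly what you do in unpacked form. The only difference is cosmetic: you re-derive the compact embedding via interpolation plus Arzel\`a--Ascoli instead of citing it, and you add a diagonal extraction over $\delta_k\uparrow s$, a step the paper leaves implicit.
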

\begin{proof}
We begin by recalling the Aubin-Lions compactness lemma which states that the following embedding is compact
$$L^\infty(0,T;H^{2+\ep}(\Gamma))\cap W^{1,\infty}(0,T;L^2(\Gamma)) \subset\subset C([0,T];H^{2+\delta}(\Gamma)),\quad \text{for any }0\leq \delta<\ep. $$
Due to the uniform boundedness of $\tilde\bfeta_N$ in $L^\infty(0,T_0;\bH^{2+\ep}(\Gamma))$, for appropriately small $
\ep>0$ (see Theorem \ref{thm:est} (1)), and the uniform boundedness of $\bv^*_N=\partial_t\tilde\bfeta_N$ in $L^\infty(0,T_0;\bL^2(\Gamma))$, the compact embedding above implies that the sequence
\begin{align}
	\tilde\bfeta_N \to \bfeta \text{  strongly in } C([0,T_0];\bH^{2+\delta}(\Gamma)),\quad\text{ for any }  0\leq \delta<s .
\end{align}
Then, by comparing the definitions \eqref{AppSol} and \eqref{approxlinear} we conclude that \eqref{strongeta} holds for the sequences $\{\bfeta_N\}$ and $\{\tau_{\Delta t}\bfeta_N\}$.
\end{proof}
The consequences of the strong convergence \eqref{strongeta} in regards to approximate ALE maps are summarized in the following Proposition.

\begin{proposition}\label{ALEconv}
The ALE maps, defined by \eqref{ale}, satisfy the following strong convergence properties:
\begin{align}\label{ale_conv1}
	\bA_{ \bfeta _N} \to \bA_{ \bfeta } \ {\rm in} \  {L^\infty(0,T_0;\bW^{2,\frac{3}{1-\delta}}(\sO))} (\sO)),
	\\
	\label{ale_conv2}
	\nabla \bA_{\bfeta_N} \rightarrow \nabla \bA_{\bfeta}  \quad \text{ in }  L^\infty(0,T_0;\bC(\sO)),\\
	\label{ale_conv3}
	(\nabla \bA_{\bfeta_N})^{-1} \rightarrow (\nabla \bA_{\bfeta})^{-1} \quad \text{ in }  L^\infty(0,T_0;\bC(\sO))\\
	\label{ale_conv4}
			 J_N \rightarrow  J_{\bfeta}=\text{det}\nabla\bA_\bfeta,\quad \text{ in } L^\infty(0,T_0;C(\sO)).
\end{align}
Furthermore, let $\bw_\bfeta=\partial_t\bA_\bfeta$ be the solution of \eqref{ale} with respect to the boundary data $\bv$. Then:
\begin{align}\label{ale_conv5}
\bw_N \to \bw_\bfeta \quad \text{ in } L^2(0,T_0;\bH^\frac32(\sO)),
\\
\label{ale_conv6}
\partial_t\tilde J_N \to \partial_tJ_\bfeta = J_\bfeta(\nabla^\bfeta\cdot\bw_\bfeta)\quad \text{ in } L^2(0,T;L^3(\sO)).
\end{align}
\end{proposition}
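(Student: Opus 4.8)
The plan is to reduce everything to the linearity of the harmonic extension \eqref{ale}, the elliptic estimates \eqref{ale_Hs}--\eqref{boundJ}, and the already-established strong convergence $\bfeta_N\to\bfeta$ in $L^\infty(0,T_0;\bH^{2+\delta}(\Gamma))$ from Proposition~\ref{prop:strongeta}. Since $\bA_{\bfeta_N}$ and $\bA_\bfeta$ both solve \eqref{ale} and coincide with $\mathbf{id}$ on $\Gamma_r$, the difference $\bA_{\bfeta_N}-\bA_\bfeta$ is precisely the harmonic extension of the boundary datum $\bfeta_N-\bfeta$ (with zero datum on $\Gamma_r$). Applying \eqref{boundsA1} with $p=\tfrac{3}{1-\delta}>3$, for which $3-\tfrac3p=2+\delta$, gives
\[
\|\bA_{\bfeta_N}-\bA_\bfeta\|_{L^\infty(0,T_0;\bW^{2,\frac{3}{1-\delta}}(\sO))}\le C\,\|\bfeta_N-\bfeta\|_{L^\infty(0,T_0;\bH^{2+\delta}(\Gamma))}\longrightarrow 0,
\]
which is \eqref{ale_conv1}; the borderline embedding $\bH^{2+\delta}(\Gamma)\hookrightarrow\bW^{2-\frac1p,p}(\Gamma)$ is handled by choosing $\delta$ slightly below $s$, which Proposition~\ref{prop:strongeta} permits. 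Then Morrey's embedding $\bW^{2,p}(\sO)\hookrightarrow\bC^{1,1-\frac3p}(\bar\sO)=\bC^{1,\delta}(\bar\sO)$, exactly as in \eqref{boundJ}, upgrades this to convergence in $L^\infty(0,T_0;\bC^{1,\delta}(\bar\sO))$, and in particular $\nabla\bA_{\bfeta_N}\to\nabla\bA_\bfeta$ uniformly on $\bar\sO\times[0,T_0]$, which is \eqref{ale_conv2}.

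Next, \eqref{ale_conv3} and \eqref{ale_conv4} follow from \eqref{ale_conv2} by the continuity of the finite-dimensional maps $M\mapsto\det M$ and $M\mapsto M^{-1}$. By Theorem~\ref{thm:estN}(2) one has $\inf_{\sO\times(0,T_0)}J_N\ge\alpha>0$ uniformly in $N$, and this lower bound is inherited by the limit $J_\bfeta$; combined with the uniform bound on $\|\nabla\bA_{\bfeta_N}\|_{L^\infty}$, the matrices $\nabla\bA_{\bfeta_N}(t,x)$ stay in a fixed compact set of invertible matrices on which inversion and the determinant are uniformly continuous. Hence uniform convergence of $\nabla\bA_{\bfeta_N}$ transfers to uniform (in $t$ and $x$) convergence of $J_N$ to $J_\bfeta$ and of $(\nabla\bA_{\bfeta_N})^{-1}$ to $(\nabla\bA_\bfeta)^{-1}$.

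For the velocity statements \eqref{ale_conv5}--\eqref{ale_conv6} I would first reduce \eqref{ale_conv5} to the strong convergence of the half-step structure velocities. Since $\bw_N$ is the harmonic extension of $\bv_N^*=\partial_t\tilde\bfeta_N$ (vanishing on $\Gamma_r$) and $\bw_\bfeta=\partial_t\bA_\bfeta$ is the harmonic extension of $\bv=\partial_t\bfeta$, linearity together with \eqref{ale_Hs} (with $k=1$) yields $\|\bw_N-\bw_\bfeta\|_{L^2(0,T_0;\bH^{3/2}(\sO))}\le\|\bv_N^*-\bv\|_{L^2(0,T_0;\bH^1(\Gamma))}$, so \eqref{ale_conv5} holds once $\bv_N^*\to\bv$ strongly in $L^2(0,T_0;\bH^1(\Gamma))$; this in turn comes from an Aubin--Lions--Simon argument using the uniform dissipation bound of Theorem~\ref{thm:est} and the uniform fractional-in-time (Nikolski) bound of Theorem~\ref{thm:Lip2}, together with the half-step estimate $\|\bv_N-\bv_N^*\|_{L^2(0,T_0;\bL^2(\sO))}=O(\sqrt{\Delta t})$ from Theorem~\ref{thm:est}. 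For \eqref{ale_conv6} I would use the mean-value identity for the determinant on each subinterval, writing $\partial_t\tilde J_N=\big(\int_0^1\mathrm{cof}\big((1-\sigma)\nabla\bA_{\tau_{\Delta t}\bfeta_N}+\sigma\nabla\bA_{\bfeta_N}\big)\,d\sigma\big):\nabla\bw_N$, noting that $\Delta t\,\nabla\bw_N=\nabla\bA_{\bfeta_N}-\nabla\bA_{\tau_{\Delta t}\bfeta_N}\to 0$ uniformly (both terms tend to $\nabla\bA_\bfeta$ uniformly, with a time-continuous limit), so the cofactor factor converges uniformly to $\mathrm{cof}(\nabla\bA_\bfeta)$, while $\nabla\bw_N\to\nabla\bw_\bfeta$ in $L^2(0,T_0;\bL^3(\sO))$ by \eqref{ale_conv5} and $\bH^{1/2}(\sO)\hookrightarrow\bL^3(\sO)$; the product then converges in $L^2(0,T_0;L^3(\sO))$ to $\mathrm{cof}(\nabla\bA_\bfeta):\nabla\bw_\bfeta=J_\bfeta(\nabla\bA_\bfeta)^{-1}:\nabla\bw_\bfeta=J_\bfeta(\nabla^\bfeta\cdot\bw_\bfeta)$.

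The routine parts (\eqref{ale_conv1}--\eqref{ale_conv4}) are purely a matter of linearity of the harmonic extension and uniform continuity of algebraic operations on the compact range of $\nabla\bA_{\bfeta_N}$; the real obstacle is \eqref{ale_conv5}, because, unlike the displacements, the velocities $\bv_N$ and $\bv_N^*$ enjoy only the basic energy/dissipation bounds in space, so the passage to \emph{strong} $L^2(0,T_0;\bH^1(\Gamma))$-convergence genuinely requires the temporal Nikolski estimate of Theorem~\ref{thm:Lip2}, and some care is needed in transferring that compactness from the full-step velocities $\bv_N$ to the half-step averages $\bv_N^*$ via the numerical-dissipation bounds.
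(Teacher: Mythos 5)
Your treatment of \eqref{ale_conv1}--\eqref{ale_conv4} follows the paper's route exactly: linearity of the harmonic extension, the elliptic estimate \eqref{boundsA1} with $p=\tfrac{3}{1-\delta}$, Morrey, and uniform continuity of $M\mapsto\det M$ and $M\mapsto M^{-1}$ on the compact range guaranteed by the uniform $C^{1,\delta}$ bound and $\inf J_N\ge\alpha$. Your integral mean-value identity for $\partial_t\tilde J_N$ is a notational variant of the paper's pointwise mean-value formula \eqref{boundJt}; both work and both route through the same ingredients.

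The genuine problem is in your argument for \eqref{ale_conv5}, and you correctly sensed it but did not close it. You reduce \eqref{ale_conv5} to $\bv_N^*\to\bv$ strongly in $L^2(0,T_0;\bH^1(\Gamma))$, then invoke ``the Nikolski bound of Theorem~\ref{thm:Lip2}'' and a transfer from $\bv_N$ to $\bv_N^*$. Two issues. First, Theorem~\ref{thm:Lip2} is an a priori estimate for smooth solutions of the continuous problem; the Nikolski bound for the semi-discrete approximations is Lemma~\ref{compact}, which appears \emph{after} Proposition~\ref{ALEconv} — usable, since its proof does not rely on \eqref{ale_conv5}, but you should cite it correctly and note the reordering. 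Second, and more substantively, the transfer from $\bv_N$ to $\bv_N^*$ cannot be made via the numerical-dissipation bounds alone: Theorem~\ref{thm:est} controls $\sum_n\|\bv^{n+1}-\bv^{n+\frac12}\|_{\bL^2(\Gamma)}^2$ only at the $\bL^2(\Gamma)$ level, and $\bv_N^*$ carries no uniform $\bH^{1+s}(\Gamma)$ bound (only $\bv_N$ does), so one cannot interpolate the $L^2$-increment control up to $\bH^1(\Gamma)$. The step $\bv_N^*\to\bv$ in $L^2(0,T_0;\bH^1(\Gamma))$ is therefore not established by the ingredients you cite; writing ``some care is needed'' flags but does not fill the gap. (For context: the paper's own proof of \eqref{ale_conv5} is a single sentence attributing it to the displacement convergence of Proposition~\ref{prop:strongeta}, which is not literally an implication about the half-step velocities, so you have identified a point in the paper that is at best underexplained — but your proposed repair as written does not resolve it.)
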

\begin{proof}
First observe that due to the linearity of \eqref{ale}, the bounds \eqref{boundsA1} and Proposition \ref{prop:strongeta}, we have the following estimate,
which implies strong convergence \eqref{ale_conv1}:
\begin{align}\label{convale}
	\|\bA_{ \bfeta _N}-\bA_{ \bfeta }\|_{L^\infty(0,T_0;\bW^{2,\frac{3}{1-\delta}}(\sO))} \leq C	\|{ \bfeta _N}-{ \bfeta }\|_{L^\infty(0,T_0;\bH^{2+\delta}(\sO))} \to 0,
\end{align}
where  $\bA_\bfeta$ solves \eqref{ale} with respect to the boundary data $\bfeta$.

Estimate \eqref{convale}, along with  Proposition \ref{prop:strongeta}, imply the strong convergence results  \eqref{ale_conv2} - \eqref{ale_conv4},
as well as  \eqref{ale_conv5}.

To prove  \eqref{ale_conv6} we recall that for two matrices $A$ and $B$, the derivative of the determinant of $A$ acting on matrix $B$, denoted by
$D(det)(A)B$, is given by
$D(det)(A)B=det(A)tr(BA^{-1})$. Hence, by applying the mean value theorem to $det(\nabla \bA_{\bfeta^n})-det(\nabla \bA_{\bfeta^{n+j}})$
 we obtain, for some $\beta\in[0,1]$, that
\begin{equation}
\begin{split}\label{boundJt}
\left|\frac{J^{n+1}-J^{n}}{\Delta t} \right|&= \left|\text{det}(\nabla \bA^{n,\beta})\nabla^{n,\beta}\cdot\left( \frac{\bA_{\bfeta^{n+1}}-\bA_{\bfeta^{n}}}{\Delta t}\right) \right|,
\end{split}
\end{equation}
where $\nabla^{n,\beta}=\nabla^{\bfeta^n} +\beta(\nabla^{\bfeta^{n+1}}-\nabla^{\bfeta^n})$ and $\nabla \bA^{n,\beta}=\nabla \bA_{\bfeta^n} +\beta(\nabla \bA_{\bfeta^{n+1}}-\nabla \bA_{\bfeta^n})$. The details of these calculations can be found in \cite{MC16} (cf. (73)). Thus, \eqref{ale_conv1}-\eqref{ale_conv5} give us that
\begin{align*}
	\partial_t\tilde J_N \to \partial_tJ_\bfeta = J_\bfeta(\nabla^\bfeta\cdot\bw_\bfeta),\quad \text{ in } L^2(0,T;L^3(\sO)).
\end{align*}
This completes the proof Proposition~\ref{ALEconv}.
\end{proof}

\begin{remark}
We note that since $\bfeta\in L^\infty(0,T_0;\bH^{2+\delta}(\Gamma))$, we have that $\bA_\bfeta\in L^\infty(0,T_0;\bC^{1,\delta}(\sO))$. 
	Hence, on some time interval, still denoted by $(0,T_0)$, $\bA_\bfeta(t)$ is a diffeomorphism from $\sO$ to $\sO_\bfeta(t)$ for every $t\in(0,T_0)$. Recall, from Remark \ref{equiv}, that this is necessary to ensure the equivalence of Definitions \ref{weakform_moving} and \ref{weakform_steady} on $(0,T_0)$.
\end{remark}


\if 1= 0
	\begin{equation}\label{convrest}
		\begin{split}
			& \nabla \bA_{\bfeta_N} \rightarrow \nabla \bA_{\bfeta} \text{ and } (\nabla \bA_{\bfeta_N})^{-1} \rightarrow (\nabla \bA_{\bfeta})^{-1} \quad \text{ in }  L^\infty(0,T_0;\bC(\sO)),\\
			& J_N \rightarrow  J_{\bfeta}=\text{det}\nabla\bA_\bfeta,\quad \text{ in } L^\infty(0,T_0;C(\sO)),\\
			& \bw_N \to \bw_\bfeta, \quad \text{ in } L^2(0,T_0;\bH^\frac32(\sO)),
		\end{split}
	\end{equation}
	where $\bw_\bfeta=\partial_t\bA_\bfeta$ solves \eqref{ale} with respect to the boundary data $\bv$.
	
	Moreover, since $\bfeta\in L^\infty(0,T;\bH^{2+\delta}(\Gamma))$, we have that $\bA_\bfeta\in L^\infty(0,T;\bC^{1,\delta}(\sO))$. 
	Hence, on some time interval, still denoted by $(0,T_0)$, $\bA_\bfeta(t)$ is a diffeomorphism from $\sO$ to $\sO_\bfeta(t)$ for every $t\in(0,T_0)$. Recall, from Remark \ref{equiv}, that this is necessary to ensure the equivalence of Definitions \ref{weakform_moving} and \ref{weakform_steady} on $(0,T_0)$.
	
		Before moving on we comment on the convergence of the time derivative of the approximate Jacobian. We recall that for two matrices $A$ and $B$, the derivative of the determinant $D(det)(A)B=det(A)tr(BA^{-1})$. Hence, by applying the mean value theorem to $det(\nabla \bA_{\bfeta^n})-det(\nabla \bA_{\bfeta^{n+j}})$
 we obtain, for some $\beta\in[0,1]$, that
\begin{equation}
\begin{split}\label{boundJt}
|\frac{J^{n+1}-J^{n}}{\Delta t}|&= |\text{det}(\nabla \bA^{n,\beta})\nabla^{n,\beta}\cdot\left( \frac{\bA_{\bfeta^{n+1}}-\bA_{\bfeta^{n}}}{\Delta t}\right) |,
\end{split}
\end{equation}
where $\nabla^{n,\beta}=\nabla^{\bfeta^n} +\beta(\nabla^{\bfeta^{n+1}}-\nabla^{\bfeta^n})$ and $\nabla \bA^{n,\beta}=\nabla \bA_{\bfeta^n} +\beta(\nabla \bA_{\bfeta^{n+1}}-\nabla \bA_{\bfeta^n})$. The details of these calculations can be found in \cite{MC16} (cf. (73)). Thus, \eqref{convrest} gives us that,
\begin{align*}
	\partial_t\tilde J_N \to \partial_tJ_\bfeta = J_\bfeta(\nabla^\bfeta\cdot\bw_\bfeta),\quad \text{ in } L^2(0,T;L^3(\sO)).
\end{align*}
\fi

	Next, we will prove strong convergence of the fluid and structure velocities. First, we obtain the following uniform bounds for the fluid and structure velocities in the Nikoski space $N^{\frac18,2}(0,T_0;\bL^2(\sO)\times \bL^2(\Gamma))$. See \eqref{Nikolski} to recall the definition of Nikolski spaces.

	\begin{lem}\label{compact}
		The sequence of approximate solutions $(\bu_N,\bv_N)$ is bounded uniformly in the Nikolski space $N^{\frac18,2}(0,T_0;\bL^2(\sO)\times \bL^2(\Gamma))$.
	\end{lem}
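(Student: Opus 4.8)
The plan is to obtain the claimed $N^{1/8,2}$ bound for the approximate solutions by mimicking, at the semi-discrete level, the proof of Theorem~\ref{thm:Lip2}, i.e. by testing the monolithic approximate formulation \eqref{weakapprox} with a suitably regularized time-averaged version of the approximate fluid and structure velocities. More precisely, fixing $h>0$ (and, since we are in the discrete setting, it suffices to treat $h=k\Delta t$ for $k\in\mathbb{N}$, the general case following by interpolating in $h$), I would construct test functions $\bq_N(t)$ and $\bfpsi_N(t)$ by the same recipe as in \eqref{test_f}--\eqref{test_s}, but with every object replaced by its $N$-approximation: the ALE maps $\bA_{\bfeta_N}$ and Jacobians $J_N$, the Piola transform $P_t^N(\mathbf{f}) = (J_N \nabla\bA_{\bfeta_N}^{-1}\mathbf{f})(t)$, the projection $P_M$ (with $\lambda_M = ch^{-3/4}$ as before), the cut-off $\chi$, the Bogovski operator $\sB$ on the fixed domain $\sO$, and the correction term $\mathsf{c}_M$. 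One must be slightly careful that the discrete formulation uses $\tau_{\Delta t}\bfeta_N$ (evaluated at the previous time level) inside the transformed gradients and Jacobians; so the Piola-type transform used to enforce $\nabla^{\tau_{\Delta t}\bfeta_N}\cdot\bq_N = 0$ must be built from $\bA_{\tau_{\Delta t}\bfeta_N}$ rather than $\bA_{\bfeta_N}$. The discrete analogue of the identity \eqref{stot} then shows $(\bq_N,\bfpsi_N)$ is an admissible test pair, and $\bq_N|_\Gamma = \bfpsi_N$ by construction.

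Next I would plug this test pair into \eqref{weakapprox}. As in the proof of Theorem~\ref{thm:Lip2}, the two terms on the left-hand side, after writing $\partial_t\bigl(\int_{t-h}^t\bu_{M,N}\bigr) = \bu_{M,N}(t,t)-\bu_{M,N}(t-h,t)+\int_{t-h}^t\partial_t\bu_{M,N}$, split into a ``good'' part producing $\tfrac12\int_h^{T_0}\!\int_\sO J_N|\bu_N(t)-\tau_h\bu_N|^2$ and $\tfrac12\int_h^{T_0}\!\int_\Gamma|\bv_{M,N}(t)-\tau_h\bv_{M,N}|^2$ (these are the desired quantities, using $\inf J_N>\alpha$ from Theorem~\ref{thm:estN}(2)), plus error terms controlled exactly as $I_0^2$ and $I_0^3$ were, by $Ch^{1/2}$. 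The right-hand side terms $I_1,\dots,I_5$ are estimated verbatim using the uniform energy bounds of Theorem~\ref{thm:est}, the uniform bounds $\|\bfeta_N\|_{L^\infty H^{2+\delta}}\le C$ and $\inf J_N>\alpha$ from Theorem~\ref{thm:estN}, and the bound $\|\bq_N\|_{L^\infty(0,T_0;\bH^1(\sO))}\le Ch^{1/2}$ which follows from the analogue of \eqref{boundsq}. The only genuinely new term relative to the continuous case is the one coming from the mismatch $\bv_N\neq\partial_t\tilde\bfeta_N = \bv_N^*$ in the viscoelastic term $\gamma\int\Lambda^{1+s}\bv_N\cdot\Lambda^{1+s}\bfpsi_N$ and the numerical regularization term $\tfrac1N\int\nabla^3\bfeta_N\cdot\nabla^3\bfpsi_N$; these are handled, as in the proof of Theorem~\ref{thm:estN} around \eqref{mismatch}, by using the control on numerical dissipation $\sum_n \|\bv^{n+1/2}-\bv^n\|^2 + \sum_n \tfrac1N\|\bfeta^{n+1/2}-\bfeta^n\|_{\bH^3}^2 \le C$ from Theorem~\ref{thm:est}, together with $\|\bfpsi_N\|\lesssim h^{1/2}$ in the relevant norm. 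Collecting everything yields $\|\tau_h\bu_N-\bu_N\|_{L^2(h,T_0;\bL^2(\sO))}^2 + \|\tau_h\bv_{M,N}-\bv_{M,N}\|_{L^2(h,T_0;\bL^2(\Gamma))}^2 \le Ch^{1/4}$, and finally the projection error $\|\bv_N - P_M\bv_N\|_{L^2 L^2}\le C\lambda_M^{-1/2}\|\bv_N\|_{L^2 H^1}\le Ch^{3/8}\|\bv_N\|_{L^2 H^{1+s}}$ (uniformly in $N$ by Theorem~\ref{thm:est}) upgrades this to the bound for $\bv_N$ itself, giving the claimed membership in $N^{1/8,2}(0,T_0;\bL^2(\sO)\times\bL^2(\Gamma))$.

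I expect the main obstacle to be purely bookkeeping rather than conceptual: verifying that all the constants appearing in the estimates for $\bq_N$, $\bfpsi_N$ and the error terms are genuinely independent of $N$. This requires the uniform bounds of Theorems~\ref{thm:est} and~\ref{thm:estN} — in particular the uniform lower bound $\inf_{\sO\times(0,T_0)} J_N>\alpha$ and the uniform bound $\|\bA_{\bfeta_N}\|_{L^\infty(0,T_0;\bC^{1,\delta})}\le C$, which make the universal Korn constant (via Lemma~1 of \cite{V12}) and the constant $K_1$ of Lemma~\ref{uest} uniform in $N$ — and it requires that the regularized Stokes / Bogovski estimates used in constructing $\bq_N$ depend on $\bA_{\bfeta_N}$ only through norms that are uniformly bounded. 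A secondary technical point is that the averaging $\int_{t-h}^t$ must be interpreted carefully near $t=h$ and at the discrete time nodes, but since $h = k\Delta t$ this is harmless, and the time-translation estimate extends to all $h$ by the standard observation that a discrete-in-time bound of the form $\|\tau_{k\Delta t}f - f\|\le C(k\Delta t)^{1/8}$ for all $k$, together with $f$ being piecewise constant, yields the continuous Nikolski bound up to a constant. With these uniform-in-$N$ ingredients in place, the proof is a direct transcription of the argument in Section~\ref{subsec:temp}.
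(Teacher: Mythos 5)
Your proposal is essentially the paper's proof: a discrete transcription of the construction in the proof of Theorem~\ref{thm:Lip2}, with the Piola transport built from the semi-discrete ALE maps, the projection parameter $\lambda_M\sim h^{-3/4}$, the Bogovski correction on the fixed domain, the restriction to $h=k\Delta t$, and the final upgrade from $P_M\bv_N$ to $\bv_N$.

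One place where the paper is more careful than you and where I would urge you to slow down is the following. You propose to ``plug the test pair into \eqref{weakapprox}'' directly. The paper explicitly remarks that this is a point where caution is needed: $(\bu_N,\bv_N)$ are solutions of the two split subproblems \eqref{first} and \eqref{second} (with the half-step quantities $\bv^{n+\frac12},\bfeta^{n+\frac12}$ intervening), and the discrete test functions therefore have to be fed into \eqref{first}$_3$ and \eqref{second}$_2$ individually, with one test pair
$(\bq_n,\bfpsi_n)=\bigl((\Delta t)\sum_{k=n-j+1}^n\bu^{k,n}_M,\ (\Delta t)\sum_{k=n-j+1}^n\bv^{k,n}_M\bigr)$
used in each subproblem at level $n$; the monolithic form is then obtained by summing over $n$. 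Proceeding this way is what makes the discrete summation-by-parts clean (since $\bu^{n+\frac12}=\bu^n$, the increments telescope to $\bu^{n+1}-\bu^n$ and $\bv^{n+1}-\bv^n$) and makes it unambiguous which of $\bv^{n+\frac12}$ and $\bv^{n+1}$ appears in which right-hand-side term. Your observation that the Piola map must be built from $\bA_{\tau_{\Delta t}\bfeta_N}$ (i.e., from $\bA_{\bfeta^n}$) rather than from $\bA_{\bfeta_N}$, so that $\bq_n\in\sV^n$, is exactly the right one and is what the paper's construction of $\bu_M^{k,n}$ encodes. Similarly, your handling of the $\bv_N\ne\bv^*_N$ mismatch and of the $\frac1N\nabla^3$ regularization via the numerical dissipation bounds of Theorem~\ref{thm:est}(3),(4) is the right mechanism, although in the actual right-hand side of the discrete formulation these terms arise multiplied against $\bfpsi_n$ rather than against $\partial_t\bfpsi_n$, so they do not force a separate decomposition as in \eqref{mismatch}; they are absorbed into the same $I_1,\dots,I_5$ estimates as in the continuous proof once the bound $\|\bq_n\|_{\bH^1(\sO)}\lesssim h^{1/2}$ (uniform in $n,N$) is in hand. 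With these points made precise, the argument is the one in the paper.
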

	\begin{proof}
The proof of this Lemma relies on the steps in the proof of Theorem \ref{thm:Lip2}. 
Namely, our aim is to prove that for any $h>0$
\begin{equation}\label{est_comp}
\int_h^{T_0} \|\tau_h\bu_N-\bu_N\|^2_{\bL^2(\sO)}+\|\tau_h\bv_N-\bv_N\|^2_{\bL^2(\Gamma)}=	(\Delta t)\sum_{n=j}^N \|\bu^n-\bu^{n-j}\|^2_{\bL^2(\sO)} + \|\bv^n-\bv^{n-j}\|^2_{\bL^{2}(\Gamma)} \leq Ch^{\frac18},
\end{equation}
where the constant $C>0$ is independent of $h$ and $N=\frac{T_0}{\Delta t}$. 
To prove this estimate we would like to use the monolithic approximate weak formulation \eqref{weakapprox}, and replace the test functions with the appropriate solutions. However, we  have to be careful since $\bu_N,\bv_N$ are not the 
solutions of the monolithic approximate weak formulation \eqref{weakapprox}, as they satisfy the corresponding subproblems obtained using the Lie splitting strategy. 

To get around this difficulty, we present here the construction of a suitable pair of test functions for the approximate sub-problems \eqref{first} and \eqref{second},
that are expressed in terms of modifications of $\bu_N,\bv_N$, which can be used to derive estimate \eqref{est_comp}. 
Their construction will  mimic the construction of their continuous-in-time counterparts \eqref{test_f} and \eqref{test_s}, as in the proof of Theorem~\ref{thm:Lip2}. 

Let  $h=j\Delta t+l$ for some $0\leq j \leq N$ and $l<\Delta t$. For simplicity of our presentation we will take $l=0$ and refer the interested reader to \cite{MC19} (see (3.8)-(3.10)) for the treatment of the case $l>0$. 

To construct the appropriate test functions we fix $\Delta t$, i.e., we fix $N$, and consider $n$ and $k$ such that $0\leq n \neq k\leq N$.
The plan is to construct $\bu^k$ and $\bv^k$ (we are dropping the subscript $N$ here) in a way that they can be used as test functions for the equations for $\bu^n$ and $\bv^n$, for some $0\leq n \neq k\leq N$. Due to the fact that we are working on moving domains, this is not trivial since these two solutions are defined on different fluid domains.

We start by defining a discrete version of the function $\bu_M(s,t)$ in \eqref{test_f} as follows:	
		\begin{align*}
			\bu^{k,n}_{M}&:= (J^{n}_{})^{-1}\nabla \bA_{\bfeta^n} \left(J^{k}(\nabla  \bA_{\bfeta^k})^{-1} (\bu^k-\bw^k)\right)
			+\bw^k_M+\left( {b^{k}-b^{k,n}_M}\right) \xi_0\chi\\
			&	-(J^{n})^{-1}\nabla \bA_{\bfeta^n}\sB\Big( \text{div} \left( (J^{k}_{}(\nabla  \bA_{\bfeta^k})^{-1} \bw^k)-J^{n}_{}(\nabla  \bA_{\bfeta^n})^{-1} \bw^k_M\right) 	\\
			&-\left( {b^{k}-b^{k,n}_M}\right) \text{div}((J^n)^{-1}\nabla \bA_{\bfeta^n}\xi_0\chi)\Big),
		\end{align*}	
where $\bw^k$ is the harmonic extension of $\bv^k$ in $\sO$, such that $\bw^k=0$ on $\partial\sO\setminus\Gamma$. Similarly,
for any $0\leq k\leq N$, we use the notation $\bv^k_M=P_M\bv^k$ and denote by $\bw^k_M$ the harmonic extension of $\bv^k_M$ in $\sO$, such that $\bw^k_M=0$ on $\partial\sO\setminus\Gamma$. We recall that $\sB$ is the Bogovski operator 
on the fixed domain $\sO$. We also define the discreet correction terms, $b^{k}=\int_\Gamma(\nabla \bfeta^k \times \bv^k) $ and $b^{k,n}_M=\int_\Gamma (\nabla \bfeta^n \times \bv_M^k) $
 and choose a smooth function $\xi_0$ so that it satisfies
$\int_\Gamma(\nabla \bfeta^n \times \xi_0) =1$ for any $n$.

Similarly, the following function is the time discretized version of $\bv_M(s,t)$ in \eqref{test_s}: $$\bv^{k,n}_{M}:=\bv^k_M-\left( {b^{k}-b^{k,n}_M}\right) \xi_0.$$
Then,
 for any $n \leq N$ we define the following pair of test functions written in terms of $\bu^{k,n}_{M}$ and $\bv_{M}^{k,n}$  (compare with \eqref{test_f} and \eqref{test_s}), 
 \begin{align}	\label{Qn1}
			(\bq_n,\bfpsi_n)=\left( (\Delta t) \sum_{k=n-j+1}^n\bu^{k,n}_{M},\,\, (\Delta t) \sum_{k=n-j+1}^n\bv_{M}^{k,n}\right) .
		\end{align}
Now we use $(\bq_n,\bfpsi_n)$ as test functions in the two subproblems \eqref{first}$_3$ and \eqref{second}$_2$ respectively and then apply $\sum_{n=0}^N$ which yields (cf. \eqref{weakform_reg}),
		\begin{align*}	 	
			&-\sum_{n=0}^N\left( \int_{\sO}\left((J^{n+1}) \bu^{n+1}-(J^{n})\bu^{n}\right) \left( \Delta t \sum_{k=n-j+1}^n\bu^{k,n}_{M}\right)  
			+\int_\Gamma(\bv^{n+1}-\bv^{n} )\left( \Delta t \sum_{k=n-j+1}^n\bv_{M}^{k,n}\right) \right) \\
			&=\frac{-1}2\sum_{n=0}^N\int_\sO \left( J^{n+1}-J^n\right)  \bu^{n+1}\cdot \left( \Delta t \sum_{k=n-j+1}^n\bu^{k,n}_{M}\right) +(\Delta t)\sum_{n=0}^N b^{\bfeta_n}(\bu^{n+1},\bw^n,\left( \Delta t \sum_{k=n-j+1}^n\bu^{k,n}_{M}\right) ) \\
			&
			+2\nu(\Delta t)\sum_{n=0}^N\int_{\sO}(J^n) \bD^{\eta^{n}}(\bu^{n+1})\cdot \bD^{\eta^{n}}\left( \Delta t \sum_{k=n-j+1}^n\bu^{k,n}_{M}\right)  \\
			&+(\Delta t)\sum_{n=0}^N \int_\Gamma \Delta\bfeta^{n+\frac12}\cdot \left( \Delta t \sum_{k=n-j+1}^n\Delta \bv_{M}^{k,n}\right)+\frac1N(\Delta t)\sum_{n=0}^N \int_\Gamma \nabla^3\bfeta^{n+\frac12}\cdot \left( \Delta t \sum_{k=n-j+1}^n\nabla^3 \bv_{M}^{k,n}\right)  \\ &+\gamma\int_\Gamma\Lambda^{1+s}\bv^{n+\frac12}\cdot\Lambda^{1+s}\left( \Delta t \sum_{k=n-j+1}^n\bv^{k,n}_{M}\right)  
			\\		
			&=I_1+...+I_5.
		\end{align*}
		After using summation by parts formula, the two terms on the left hand side of the equation above produce the desired terms
		$\int_h^{T_0} \|\tau_h\bu_N-\bu_N\|^2_{\bL^2(\sO)}+\int_h^{T_0}\|\tau_h\bv_N-\bv_N\|^2_{\bL^2(\Gamma)}$. The rest of the terms including the six terms $I_{1,...,5}$ on the right-hand side of the equation above are treated identically as in the proof of Theorem \ref{thm:Lip2}  (see the bounds obtained for the terms $I_{1,...,6}$ following \eqref{weakform_reg}).
		This completes the proof of Lemma~\ref{compact}.
	\end{proof}
	To utilize this result and obtain strong convergence, up to a subsequence, of the approximate solutions, we intend to use the following variant of the Aubin-Lions theorem (see \cite{Tem95} and \cite{S87}).
	\begin{lem}\label{ALthm}
		Assume that $\mathcal{Y}_0\subset\mathcal{Y}$ are Banach spaces such that $\mathcal{Y}_0$ and $\mathcal{Y}$ are reflexive with compact embedding of $\mathcal{Y}_0$ in $\mathcal{Y}$. 
		Then for any $m>0$, the embedding	$$ 
		L^2(0,T;\mathcal{Y}_0)\cap N^{m,2}(0,T;\mathcal{Y})
		\hookrightarrow L^2(0,T;\mathcal{Y})$$	is compact.
	\end{lem}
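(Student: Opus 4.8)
\emph{Overall plan.} The statement is a compactness-in-time criterion, and I would prove it by reducing it to the translation-based (Kolmogorov--Riesz / Simon) characterization of compact subsets of $L^2(0,T;\mathcal Y)$, of which the hypotheses furnish exactly the two required ingredients. Concretely, it suffices to show that any sequence $\{u_n\}$ bounded in $L^2(0,T;\mathcal Y_0)\cap N^{m,2}(0,T;\mathcal Y)$ is totally bounded in the complete space $L^2(0,T;\mathcal Y)$. From the boundedness in $L^2(0,T;\mathcal Y_0)$ together with the compact embedding $\mathcal Y_0\subset\subset\mathcal Y$ one obtains ``spatial'' precompactness of time-averages, while the Nikolski bound is, by the very definition \eqref{Nikolski}, a uniform time-translation modulus $\|\tau_h u_n-u_n\|_{L^2(h,T;\mathcal Y)}\le C\,h^{m}$, $0<h<T$. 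Since only two spaces are available (no ``weak'' third space between $\mathcal Y$ and its dual), the classical three-space Aubin--Lions--Lions lemma does not apply directly; the assertion is instead a special case of Simon's theorem \cite{S87} with $X=\mathcal Y_0$ and $B=Y=\mathcal Y$. I would nonetheless give the short self-contained argument sketched below.

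\emph{Mollification step.} First I would extend each $u_n$ from $(0,T)$ to a slightly larger interval by reflection about the endpoints; a direct check shows this preserves both the $L^2(\cdot;\mathcal Y_0)$ bound and the time-translation modulus up to fixed multiplicative constants (for $h$ small relative to $T$). Let $\rho_\varepsilon$ be a standard mollifier in time and set $u_n^{\varepsilon}=\rho_\varepsilon\ast_t u_n$. By the integral Minkowski inequality and the Nikolski bound,
\[
\|u_n^{\varepsilon}-u_n\|_{L^2(0,T;\mathcal Y)}\le\int\rho_\varepsilon(s)\,\|\tau_s u_n-u_n\|_{L^2(0,T;\mathcal Y)}\,ds\le C\,\varepsilon^{m},
\]
uniformly in $n$. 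On the other hand, for each fixed $\varepsilon$ the family $\{u_n^{\varepsilon}\}_n$ is relatively compact in $C([0,T];\mathcal Y)$ by Arzel\`a--Ascoli: it is equicontinuous in $t$ because $\partial_t u_n^{\varepsilon}=\rho_\varepsilon'\ast_t u_n$ is bounded in $L^\infty(0,T;\mathcal Y)$ (with bound $\le\|\rho_\varepsilon'\|_{L^2}\|u_n\|_{L^2(0,T;\mathcal Y)}$), and for every fixed $t$ the values $\{u_n^{\varepsilon}(t)\}_n$ lie in a fixed bounded subset of $\mathcal Y_0$ (bound $\le\|\rho_\varepsilon\|_{L^2}\|u_n\|_{L^2(0,T;\mathcal Y_0)}$), hence are precompact in $\mathcal Y$. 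This gives relative compactness of $\{u_n^{\varepsilon}\}_n$ in $C([0,T];\mathcal Y)$, and a fortiori in $L^2(0,T;\mathcal Y)$.

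\emph{Conclusion.} Finally, given $\eta>0$, I would choose $\varepsilon$ with $C\varepsilon^{m}<\eta/2$; covering the totally bounded family $\{u_n^{\varepsilon}\}_n$ by finitely many $L^2(0,T;\mathcal Y)$-balls of radius $\eta/2$ and enlarging them to radius $\eta$ produces, by the uniform approximation estimate above, a finite cover of $\{u_n\}$. Hence $\{u_n\}$ is totally bounded, and therefore relatively compact, in $L^2(0,T;\mathcal Y)$, which is the claim.

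\emph{Expected main obstacle.} There is no single hard estimate here; the subtlety is structural. Because only two spaces are given, one must route the argument through the time-translation criterion rather than the usual three-space lemma, and the corresponding care lies in (a) verifying that the reflection extension leaves the Nikolski modulus uniformly controlled so that no time-endpoint boundary layer survives, and (b) correctly interlacing the ``spatial'' compactness of the frozen mollified slices (coming from the $\mathcal Y_0$ bound) with the ``temporal'' approximability (coming from the Nikolski bound) in the total-boundedness argument.
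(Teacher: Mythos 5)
The paper does not prove Lemma~\ref{ALthm}: it is stated and attributed to \cite{Tem95,S87} without argument, so there is no ``paper proof'' to compare against. Your proof is essentially the standard proof of Simon's compactness theorem (Theorem~5 in \cite{S87}) specialized to the two-space case $X=\mathcal Y_0$, $B=Y=\mathcal Y$, which is exactly the reference the authors invoke, and it is correct. The structural observation that the classical three-space Aubin--Lions--Lions lemma is not directly invocable here, and that one must route through the translation/mollification criterion, is the right one. The key steps all hold: after a reflection extension past the endpoints (which indeed preserves the $L^2(\cdot;\mathcal Y_0)$ bound and the Nikolski translation modulus up to universal constants for $h\ll T$, and yields control for translations in both directions since $\|\tau_{-h}f-f\|_{L^2(0,T-h;\mathcal Y)}=\|\tau_hf-f\|_{L^2(h,T;\mathcal Y)}$), the Minkowski-in-integral-form estimate $\|u_n^\varepsilon-u_n\|_{L^2(0,T;\mathcal Y)}\le C\varepsilon^m$ is uniform in $n$, and for each fixed $\varepsilon$ the mollified family is bounded in $W^{1,\infty}(0,T;\mathcal Y)\cap L^\infty(0,T;\mathcal Y_0)$, hence relatively compact in $C([0,T];\mathcal Y)$ by Arzel\`a--Ascoli plus the compact embedding $\mathcal Y_0\subset\subset\mathcal Y$. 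The final total-boundedness interlacing is routine. One cosmetic remark: the hypothesis that $\mathcal Y_0$ and $\mathcal Y$ be reflexive is not actually used anywhere in your (or Simon's) argument; only completeness of $\mathcal Y$ and compactness of the embedding are needed, so stating that you never invoke reflexivity would make the proof slightly more transparent.
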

	Hence, combining Lemmas \ref{compact} and \ref{ALthm} with $\mathcal{Y}_0=\bH^1(\sO)\times \bH^{1+s}(\Gamma)$ and $\mathcal{Y}=\bL^2(\sO)\times\bL^2(\Gamma)$, we see that the sequence $\{(\bu_N,\bv_N)\}$ is relatively compact in $L^2(0,T_0;\bL^2(\sO)\times\bL^2(\Gamma))$. 
	Therefore, we obtained the following strong convergence result for fluid and structure velocities:
	\begin{proposition}\label{prop:strongbv}
		The sequence 
		\begin{align*}
			\{(\bu_N,\bv_N)\} \to (\bu,\bv) \quad\text{ strongly in } L^2(0,T_0;\bL^2(\sO)\times\bL^2(\Gamma)).
		\end{align*}
	\end{proposition}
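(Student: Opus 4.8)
The plan is to obtain the strong convergence as a direct application of the fractional-in-time Aubin--Lions compactness principle recorded in Lemma~\ref{ALthm}, feeding it the two complementary uniform bounds that are now available: the ``spatially good'' bound in $L^2$-in-time coming from the energy estimates, and the ``temporally good'' fractional bound coming from Lemma~\ref{compact}. First I would collect the relevant uniform bounds. From Theorem~\ref{thm:estN}(3) and Theorem~\ref{thm:est} the sequence $\{\bu_N\}$ is bounded in $L^2(0,T_0;\bH^1(\sO))$ and $\{\bv_N\}$ is bounded in $L^2(0,T_0;\bH^{1+s}(\Gamma))$, so $\{(\bu_N,\bv_N)\}$ is bounded in $L^2(0,T_0;\mathcal{Y}_0)$ with $\mathcal{Y}_0:=\{\bu\in\bH^1(\sO):\bu=0\text{ on }\Gamma_r\}\times\bH^{1+s}(\Gamma)$, a closed (hence reflexive) subspace of $\bH^1(\sO)\times\bH^{1+s}(\Gamma)$. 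On the other hand, Lemma~\ref{compact} provides the uniform bound of $\{(\bu_N,\bv_N)\}$ in the Nikolski space $N^{1/8,2}(0,T_0;\mathcal{Y})$ with $\mathcal{Y}:=\bL^2(\sO)\times\bL^2(\Gamma)$.

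Next I would verify the hypotheses of Lemma~\ref{ALthm}: both $\mathcal{Y}_0$ and $\mathcal{Y}$ are Hilbert spaces, hence reflexive, and the embedding $\mathcal{Y}_0\hookrightarrow\mathcal{Y}$ is compact, since $\bH^1(\sO)\hookrightarrow\hookrightarrow\bL^2(\sO)$ by Rellich--Kondrachov (as $\sO$ is a bounded Lipschitz domain) and $\bH^{1+s}(\Gamma)\hookrightarrow\hookrightarrow\bL^2(\Gamma)$ for $s>0$ on the compact manifold $\Gamma$. Applying Lemma~\ref{ALthm} with $m=\tfrac18$ then yields that $\{(\bu_N,\bv_N)\}$ is relatively compact in $L^2(0,T_0;\mathcal{Y})=L^2(0,T_0;\bL^2(\sO)\times\bL^2(\Gamma))$, so along a subsequence it converges strongly in this space.

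Finally I would identify the limit. Since it is already known that $\bu_N\rightharpoonup\bu$ weakly in $L^2(0,T_0;\bH^1(\sO))$ and weak$^*$ in $L^\infty(0,T_0;\bL^2(\sO))$, and $\bv_N\rightharpoonup\bv$ weakly in $L^2(0,T_0;\bH^{1+s}(\Gamma))$, the strong $L^2(0,T_0;\mathcal{Y})$ limit of the extracted subsequence must coincide with $(\bu,\bv)$; by the usual subsequence-of-every-subsequence argument the convergence then holds for the whole (already relabeled) sequence, giving the statement. I do not anticipate a genuine obstacle in this step: all the substantial work has been carried out upstream, in the temporal-regularity estimate of Lemma~\ref{compact} and the uniform bounds of Theorems~\ref{thm:est} and \ref{thm:estN}. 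The only minor point requiring care is purely bookkeeping --- that the incompressibility constraint on $\bu_N$ is imposed through the moving-frame operator $\nabla^{\tau_{\Delta t}\bfeta_N}$ rather than the standard divergence; this plays no role for the present $\bL^2$-compactness statement, although it will need attention later when passing to the limit in the weak formulation \eqref{weakapprox}.
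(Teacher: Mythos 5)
Your proposal is correct and follows essentially the same route as the paper: combine the Nikolski bound from Lemma~\ref{compact} with the uniform $L^2_t(\bH^1_x\times\bH^{1+s}_x)$ bounds from Theorems~\ref{thm:est} and \ref{thm:estN}, and then invoke the fractional Aubin--Lions Lemma~\ref{ALthm} with $\mathcal{Y}_0=\bH^1(\sO)\times\bH^{1+s}(\Gamma)$ and $\mathcal{Y}=\bL^2(\sO)\times\bL^2(\Gamma)$. The extra bookkeeping you add (explicit verification of the compact embedding and identification of the strong limit with the already-established weak limit) is sound and consistent with what the paper leaves implicit.
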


This completes the convergence results for the approximate solutions
that are necessary to pass to the limit as $N\to\infty$ in the monolithic weak formulation of approximate solutions and show that the limits satisfy the weak formulation of the continuous problem. However, as with all FSI problems defined on moving domains, for which the continuous solution is approximated by a sequence of time-discretized approximate solutions, before we can pass to the limit we need to take care of the corresponding test functions. Namely, the test functions in the approximate weak formulations given in terms of the fixed domain $\sO$, depend on $N$ because they satisfy the {\emph{transformed}} divergence-free condition, where the gradient operator depends on $N$ via the approximate interface displacement $\bfeta_N$. 
Therefore, before we can pass to the limit as $N\to\infty$ (or equivalently $\Delta t \to 0$) in \eqref{weakapprox} we need to construct appropriate test functions that satisfy certain strong convergence properties, and are dense in the space of approximate and continuous test functions. 
This is the subject of the next section.

\subsection{Construction of test functions}\label{sec:test_functions}
In this section, we will construct a pair of test functions for the approximate weak formulation \eqref{weakapprox} and the corresponding limiting equations
that have certain desired properties to pass to the limit as $N\to\infty$. We begin by considering a test pair $(\bq,\bfpsi)$ for some $\bq\in L^\infty(0,T;\bH^{1}(\sO)) \cap H^1(0,T;\bL^2(\sO))$ such that $\nabla^{\bfeta}\cdot\bq=0$, and $\bfpsi\in L^\infty(0,T;\bH^2(\Gamma))\cap H^1(0,T;\bH^{1+s}(\Gamma))$ that satisfy the 
kinematic coupling condition i.e. $\bq|_\Gamma =\bfpsi$.
Now, we will build a pair of functions $(\bq_{N},\bfpsi_{N})$ that approximates $(\bq,\bfpsi)$ in an appropriate sense and is also a valid test function for the approximate system \eqref{weakapprox}.

 We define the approximate fluid test function $\bq_N$, with the aid of the Piola transformation as done previously in the proof of Theorem \ref{thm:Lip2} in Section \ref{subsec:temp}:
\begin{align}\label{VelTest}
	&\bq_{N}={J^{-1}_{ \bfeta_N}}\nabla \bA_{ \bfeta_N}{J^{}_{ \bfeta }} \nabla  \bA^{-1}_{ \bfeta } \left( \bq-\bfpsi\chi\right) +\bfpsi\chi-\frac{\lambda^{ {\bfeta} _N}-\lambda^{ \bfeta }}{\lambda^{N}_0}(\xi_0\chi)\\
	\nonumber
	&+{J^{-1}_{ \bfeta _N}}\sB\left( \text{div}\left( (J_{ \bfeta }(\nabla \bA_{ \bfeta })^{-1}-J_{ \bfeta_N }(\nabla \bA_{ \bfeta_N })^{-1})\bfpsi\chi-\frac{\lambda^{ {\bfeta} _N}-\lambda^{ \bfeta }}{\lambda^N_0}J_{ \bfeta_N }(\nabla \bA_{ \bfeta_N })^{-1}\xi_0\chi\right) \right),
\end{align}	
and for the structure test function we let,
\begin{align}\label{StVelTest}	
	&\bfpsi_N= \bfpsi-\frac{\lambda^{ {\bfeta} _N}-\lambda^{ {\bfeta} }}{\lambda^N_0}\xi_0.
\end{align}	
For the correction terms, we pick an appropriate $\xi_0 \in \bC_0^\infty(\Gamma)$ such that $\lambda_0^N$ defined below is not 0 for any $N\in \mathbb{N}$, 
$$\lambda_0^N(t)=-\int_\Gamma ({\bf id}+ {\bfeta} _N(t))\times \nabla\xi_0. \quad\text{}$$
We also define the following corrector functions that only depend on time,
$$\lambda^{ {\bfeta} _N}(t)=-\int_\Gamma ({\bf id} + {\bfeta}_N (t))\times \nabla\bfpsi (t),\quad \lambda^{ {\bfeta} _{}}(t)=-\int_\Gamma ({\bf id} + {\bfeta}_{} (t))\times \nabla\bfpsi (t).$$
As earlier, $\chi(r)$ is a smooth function on $\sO$ such that $\chi(1)=1$ and $\chi(0)=0$.
Observe that the properties of the Piola transformation (see e.g. Theorem 1.7 in \cite{C88}), imply that
$$\nabla^{ \bfeta _N} \cdot\bq_N=J^{}_{ \bfeta }J^{-1}_{ \bfeta_N } \nabla^{ \bfeta }\cdot\bq=0, \qquad \text{and}\qquad \bq_N|_\Gamma\cdot\bn_N =\bfpsi_N\cdot\bn_N .
$$
Additionally, thanks to \eqref{boundsA1} and \eqref{ale_conv1} we obtain that
\begin{align}
	\notag&\|	\bq_N-\bq\|_{L^\infty(0,T;\bH^1(\sO))}\\ \notag&\leq \|\bA_{ \bfeta _N}-\bA_{ \bfeta }\|_{L^\infty(0,T;\bW^{2,\frac{3}{1-\delta}}(\sO))}\left( \|\bq\|_{L^\infty(0,T;\bH^1(\sO))}+\|\bfpsi\|_{L^\infty(0,T;\bH^2(\Gamma))}\right) \\
	&\qquad+\|\lambda^{ {\bfeta} _N}-\lambda^{ {\bfeta} }\|_{L^\infty(0,T)}\to 0. \label{convq}
\end{align}

Similarly, since $\lambda^\bfeta$ and $\lambda^{\bfeta_N}$ are constant in space, we readily obtain that
\begin{equation}\begin{split}\label{convpsi}
		\bfpsi_N \rightarrow \bfpsi \quad \text{ in }L^\infty(0,T;\bH^2(\bar\Gamma)).
	\end{split}
\end{equation}
{\cblue{Thus, we have shown the following result:
\begin{proposition}\label{test}
The  approximate fluid velocity test functions $\bq_{N}$ constructed in \eqref{VelTest}, and the approximate structure velocity test functions $\bfpsi_N$ constructed 
in \eqref{StVelTest}, satisfy the following properties:
\begin{itemize}
\item $\nabla^{ \bfeta _N} \cdot\bq_N=0$
\item $\bq_N|_\Gamma\cdot\bn_N =\bfpsi_N\cdot\bn_N$.
\end{itemize}
Furthermore, the following strong convergence results hold:
$$
\bq_N \rightarrow \bq \ \ {\rm in}\ \ {L^\infty(0,T;\bH^1(\sO))}  \ \ {\rm and} \ \ 
\bfpsi_N \rightarrow \bfpsi \quad \text{ in }L^\infty(0,T;\bH^2(\bar\Gamma)),
$$
where $(\bq,\bfpsi )$ are the test functions associated with the continuous problem. 
\end{proposition}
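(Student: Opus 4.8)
The plan is short, since most of the work is already carried out in the lines preceding the statement: it remains to assemble the divergence computation, the trace computation, and the two convergence bounds \eqref{convq} and \eqref{convpsi}. \emph{Algebraic properties.} To see $\nabla^{\bfeta_N}\cdot\bq_N=0$ I would write $\bq_N=J_{\bfeta_N}^{-1}\nabla\bA_{\bfeta_N}\,\bg_N$ with $\bg_N:=J_{\bfeta_N}\nabla\bA_{\bfeta_N}^{-1}\bq_N$ and invoke the Piola identity (Theorem~1.7-1 in \cite{C88}), which gives $\nabla^{\bfeta_N}\cdot\bq_N=J_{\bfeta_N}^{-1}\,\mathrm{div}\,\bg_N$; the Bogovskii corrector inside \eqref{VelTest} is engineered exactly so that $\mathrm{div}\,\bg_N$ telescopes to $J_\bfeta\,\mathrm{div}^\bfeta\bq=0$, just as in the computation \eqref{stot} used in the proof of Theorem~\ref{thm:Lip2}, with the scalar (space-independent) corrector $\tfrac{\lambda^{\bfeta_N}-\lambda^\bfeta}{\lambda_0^N}\xi_0\chi$ serving as the flux/mean-value fixer that makes the argument of $\sB$ have zero integral over $\sO$. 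For the boundary identity, on $\Gamma$ one has $\chi\equiv1$, the Bogovskii term vanishes since $\sB(\cdot)\in\bH^1_0(\sO)$, and $(\bq-\bfpsi\chi)|_\Gamma=\bq|_\Gamma-\bfpsi=0$ by the kinematic coupling of the limiting test pair $(\bq,\bfpsi)$, so $\bq_N|_\Gamma=\bfpsi-\tfrac{\lambda^{\bfeta_N}-\lambda^\bfeta}{\lambda_0^N}\xi_0=\bfpsi_N$, and in particular $\bq_N|_\Gamma\cdot\bn_N=\bfpsi_N\cdot\bn_N$.

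\emph{Convergence of $\bq_N$.} I would estimate $\bq_N-\bq$ termwise, reproducing \eqref{convq}. The leading term is controlled by the operator norm of $J_{\bfeta_N}^{-1}\nabla\bA_{\bfeta_N}J_\bfeta\nabla\bA_\bfeta^{-1}-\mathrm{Id}$ acting on $\bq-\bfpsi\chi\in L^\infty(0,T;\bH^1(\sO))$, which by the algebra and inverse estimates stemming from \eqref{boundsA1} is bounded by $\|\bA_{\bfeta_N}-\bA_\bfeta\|_{L^\infty(0,T;\bW^{2,3/(1-\delta)}(\sO))}$; this tends to zero by \eqref{ale_conv1}. The scalar-corrector contribution is $O(\|\lambda^{\bfeta_N}-\lambda^\bfeta\|_{L^\infty(0,T)})$, which vanishes since $\bfeta_N\to\bfeta$ in $L^\infty(0,T;\bH^{2+\delta}(\Gamma))$ by Proposition~\ref{prop:strongeta}. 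The Bogovskii contribution is estimated via the boundedness $\sB:\bL^2(\sO)\to\bH^1_0(\sO)$ on the \emph{fixed} domain $\sO$, the uniform bound on $J_{\bfeta_N}^{-1}\nabla\bA_{\bfeta_N}$ from Theorem~\ref{thm:estN}, and the $\bL^2(\sO)$-convergence to zero of $\mathrm{div}\big((J_\bfeta\nabla\bA_\bfeta^{-1}-J_{\bfeta_N}\nabla\bA_{\bfeta_N}^{-1})\bfpsi\chi\big)$, again a consequence of \eqref{ale_conv1}. Together these give $\bq_N\to\bq$ in $L^\infty(0,T;\bH^1(\sO))$.

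\emph{Convergence of $\bfpsi_N$ and the main obstacle.} The convergence $\bfpsi_N\to\bfpsi$ in $L^\infty(0,T;\bH^2(\bar\Gamma))$ is immediate from \eqref{StVelTest}, since $\lambda^{\bfeta_N}$ and $\lambda^{\bfeta}$ are constant in space and $\lambda^{\bfeta_N}\to\lambda^\bfeta$ uniformly in $t$ by Proposition~\ref{prop:strongeta}; this is \eqref{convpsi}. The genuine obstacle, as is standard in limit passages for FSI on moving domains, is the Bogovskii-correction term in the previous step: to make the argument of $\sB$ converge in $\bL^2(\sO)$ one really needs the $\bW^{2,p}$ — not merely $\bH^2$ — convergence of the ALE maps, so that the associated cofactor matrices converge in $\bW^{1,p}$ and the divergence of their products with $\bfpsi\chi$ converges in $\bL^p\hookrightarrow\bL^2$, and one must keep the prefactor $J_{\bfeta_N}^{-1}\nabla\bA_{\bfeta_N}$ uniformly bounded, which is exactly guaranteed by the uniform non-degeneracy $\inf_{\sO\times(0,T)}J_{\bfeta_N}>\alpha$ from Theorem~\ref{thm:estN}.
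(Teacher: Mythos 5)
Your proof is correct and follows the paper's approach: the Piola identity yields the transformed divergence-free property, the pointwise boundary computation yields the trace identity, and the strong convergences come from the estimates \eqref{convq}--\eqref{convpsi} together with Proposition \ref{prop:strongeta} and \eqref{ale_conv1}, with your explicit peeling-off of the Bogovskii contribution being simply a more detailed unpacking of the paper's bound. Note also that your boundary computation in fact yields the stronger full trace equality $\bq_N|_\Gamma=\bfpsi_N$ (which is what the test space in \eqref{weakapprox} actually requires), not merely the normal-component identity stated in the proposition.
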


\subsection{Passing to the limit}
We are now in a position to pass to the limit in the semi-discrete formulation \eqref{weakapprox}, as $N\to\infty$. 
We use the test functions constructed above in Proposition~\ref{test} as the test functions in the semi-discrete formulation \eqref{weakapprox}
(these test functions are dense in the space of all test functions for the approximate problems).  Then, use the weak and strong convergence results discussed above for approximate solutions, and pass to the limit as $N\to\infty$ in \eqref{weakapprox} to show that the limits of  approximate subsequences satisfy the weak formulation of the continuous problem stated in Definition
\ref{weakform_steady}. Due to Proposition~\ref{equiv}, Definition~\ref{weakform_steady} and Definition \ref{weakform_moving}  are equivalent, which completes the proof of 
Theorem~\ref{thm:exist}.

}}

\section*{Acknowledgment}
SC acknowledges support from the National Science Foundation grants DMS-240892, DMS-2247000, DMS-2011319. 
BM acknowledges partial support from the Croatian Science Foundation, project number IP-2022-10-2962 and from the Croatia-USA bilateral grant “The mathematical framework for the diffuse interface method applied to coupled problems in fluid dynamics”. KT acknowledges support from the National Science Foundation grant DMS-2407197.

\bibliographystyle{plain}
\bibliography{3DFSI}

\newcommand{\noop}[1]{}
\begin{thebibliography}{10}

\bibitem{BH21}
A.~Behzadan and M.~Holst.
\newblock Multiplication in {S}obolev spaces, revisited.
\newblock {\em Ark. Mat.}, 59(2):275--306, 2021.

\bibitem{bodnar2014fluid}
T.~Bodn{\'a}r, G.~Galdi, and {\v{S}}.~Ne{\v{c}}asov{\'a}.
\newblock {\em Fluid-structure interaction and biomedical applications}.
\newblock Springer, 2014.

\bibitem{Canic21}
S.~{\v C}ani{\'c}.
\newblock Moving boundary problems.
\newblock {\em Bull. Amer. Math. Soc. (N.S.)}, 58(1):79--106, 2021.

\bibitem{CDEG}
A.~Chambolle, B.~Desjardins, M.~J. Esteban, and C.~Grandmont.
\newblock Existence of weak solutions for the unsteady interaction of a viscous
  fluid with an elastic plate.
\newblock {\em J. Math. Fluid Mech.}, 7(3):368--404, 2005.

\bibitem{ChenTriggiani90}
S.~Chen and R.~Triggiani.
\newblock Gevrey class semigroups arising from elastic systems with gentle
  dissipation: the case {$0<\alpha <\frac 12$}.
\newblock {\em Proc. Amer. Math. Soc.}, 110(2):401--415, 1990.

\bibitem{C88}
P.~Ciarlet.
\newblock {\em Mathematical elasticity, Vol. I}.
\newblock volume 20 of Studies in Mathematics and its Applications.
  North-Holland Publishing Co., Amsterdam, 1988.

\bibitem{FGH02}
A.~Fursikov, M.~Gunzburger, and L.~Hou.
\newblock Trace theorems for three-dimensional, time-dependent solenoidal
  vector fields and their applications.
\newblock {\em Trans. Amer. Math. Soc.}, 354(3):1079--1116, 2002.

\bibitem{Galdi}
G.~P. Galdi.
\newblock {\em An introduction to the mathematical theory of the
  {N}avier-{S}tokes equations}.
\newblock Springer Monographs in Mathematics. Springer, New York, second
  edition, 2011.
\newblock Steady-state problems.

\bibitem{GHH06}
M.~Gei\ss~ert, H.~Heck, and M.~Hieber.
\newblock On the equation {${\rm div}\,u=g$} and {B}ogovski\u{\i}'s operator in
  {S}obolev spaces of negative order.
\newblock In {\em Partial differential equations and functional analysis},
  volume 168 of {\em Oper. Theory Adv. Appl.}, pages 113--121. Birkh\"{a}user,
  Basel, 2006.

\bibitem{GT83}
D.~Gilbarg and N.~Trudinger.
\newblock {\em Elliptic partial differential equations of second order}, volume
  224 of {\em Grundlehren der mathematischen Wissenschaften [Fundamental
  Principles of Mathematical Sciences]}.
\newblock Springer-Verlag, Berlin, second edition, 1983.

\bibitem{G08}
C.~Grandmont.
\newblock Existence of weak solutions for the unsteady interaction of a viscous
  fluid with an elastic plate.
\newblock {\em SIAM J. Math. Anal.}, 40(2):716--737, 2008.

\bibitem{GH16}
C.~Grandmont and M.~Hillairet.
\newblock Existence of global strong solutions to a beam-fluid interaction
  system.
\newblock {\em Arch. Ration. Mech. Anal.}, 220(3):1283--1333, 2016.

\bibitem{GrandmOntHillairetLeq19}
C.~Grandmont, M.~Hillairet, and J.~Lequeurre.
\newblock Existence of local strong solutions to fluid-beam and fluid-rod
  interaction systems.
\newblock {\em Ann. Inst. H. Poincar\'e{} C Anal. Non Lin\'eaire},
  36(4):1105--1149, 2019.

\bibitem{grandmont2024existence}
C.~Grandmont and L.~Sabbagh.
\newblock Existence and uniqueness of strong solutions to a bi-dimensional
  fluid-structure interaction system.
\newblock 2024.

\bibitem{G11}
P.~Grisvard.
\newblock {\em Elliptic problems in nonsmooth domains}, volume~69 of {\em
  Classics in Applied Mathematics}.
\newblock Society for Industrial and Applied Mathematics (SIAM), Philadelphia,
  PA, 2011.
\newblock Reprint of the 1985 original [MR0775683], With a foreword by Susanne
  C. Brenner.

\bibitem{kaltenbacher2018mathematical}
B.~Kaltenbacher, I.~Kukavica, I.~Lasiecka, R.~Triggiani, A.~Tuffaha, and
  J.~Webster.
\newblock {\em Mathematical theory of evolutionary fluid-flow structure
  interactions}.
\newblock Springer, 2018.

\bibitem{KSS23}
M.~Kampschulte, S.~Schwarzacher, and G.~Sperone.
\newblock Unrestricted deformations of thin elastic structures interacting with
  fluids.
\newblock {\em J. Math. Pures Appl. (9)}, 173:96--148, 2023.

\bibitem{KT12}
I.~Kukavica and A.~Tuffaha.
\newblock Solutions to a fluid-structure interaction free boundary problem.
\newblock {\em Discrete Contin. Dyn. Syst.}, 32(4):1355--1389, 2012.

\bibitem{FSIHeat}
V.~M\'acha, B.~Muha, \v{S}. Ne\v{c}asov\'a, A.~Roy, and S.~Trifunovi\'c.
\newblock Existence of a weak solution to a nonlinear fluid-structure
  interaction problem with heat exchange.
\newblock {\em Comm. Partial Differential Equations}, 47(8):1591--1635, 2022.

\bibitem{MaityTakhashi21}
D.~Maity and T.~Takahashi.
\newblock {$L^p$} theory for the interaction between the incompressible
  {N}avier-{S}tokes system and a damped plate.
\newblock {\em J. Math. Fluid Mech.}, 23(4):Paper No. 103, 23, 2021.

\bibitem{MS22}
B.~Muha and S.~Schwarzacher.
\newblock Existence and regularity of weak solutions for a fluid interacting
  with a non-linear shell in three dimensions.
\newblock {\em Ann. Inst. H. Poincar\'{e} C Anal. Non Lin\'{e}aire},
  39(6):1369--1412, 2022.

\bibitem{MC13}
B.~Muha and S.~\v{C}ani\'{c}.
\newblock Existence of a weak solution to a nonlinear fluid-structure
  interaction problem modeling the flow of an incompressible, viscous fluid in
  a cylinder with deformable walls.
\newblock {\em Arch. Ration. Mech. Anal.}, 207(3):919--968, 2013.

\bibitem{MC14}
B.~Muha and S.~\v{C}ani\'{c}.
\newblock Existence of a solution to a fluid-multi-layered-structure
  interaction problem.
\newblock {\em J. Differential Equations}, 256(2):658--706, 2014.

\bibitem{SunBorSlip}
B.~Muha and S.~\v{C}ani\'c.
\newblock Existence of a weak solution to a fluid-elastic structure interaction
  problem with the {N}avier slip boundary condition.
\newblock {\em J. Differential Equations}, 260(12):8550--8589, 2016.

\bibitem{MC16}
B.~Muha and S.~\v{C}ani\'{c}.
\newblock Existence of a weak solution to a fluid-elastic structure interaction
  problem with the {N}avier slip boundary condition.
\newblock {\em J. Differential Equations}, 260(12):8550--8589, 2016.

\bibitem{MC19}
B.~Muha and S.~\v{C}ani\'{c}.
\newblock A generalization of the {A}ubin-{L}ions-{S}imon compactness lemma for
  problems on moving domains.
\newblock {\em J. Differential Equations}, 266(12):8370--8418, 2019.

\bibitem{SebastianWSU}
S.~Schwarzacher and M.~Sroczinski.
\newblock Weak-strong uniqueness for an elastic plate interacting with the
  {N}avier-{S}tokes equation.
\newblock {\em SIAM J. Math. Anal.}, 54(4):4104--4138, 2022.

\bibitem{S87}
J.~Simon.
\newblock Compact sets in the space {$L^p(0,T;B)$}.
\newblock {\em Ann. Mat. Pura Appl. (4)}, 146:65--96, 1987.

\bibitem{TaoDisspersive}
T.~Tao.
\newblock {\em Nonlinear dispersive equations}, volume 106 of {\em CBMS
  Regional Conference Series in Mathematics}.
\newblock Conference Board of the Mathematical Sciences, Washington, DC; by the
  American Mathematical Society, Providence, RI, 2006.
\newblock Local and global analysis.

\bibitem{T24slip}
K.~Tawri.
\newblock A stochastic fluid-structure interaction problem with navier slip
  boundary condition.
\newblock {\em SIAM Journal on Mathematical Analysis, to appear}, 2024.
\newblock arXiv:2402.13303.

\bibitem{tebou2021regularity}
L.~Tebou.
\newblock Regularity and stability for a plate model involving fractional
  rotational forces and damping.
\newblock {\em Zeitschrift f{\"u}r angewandte Mathematik und Physik},
  72(4):158, 2021.

\bibitem{Tem95}
R.~Temam.
\newblock {\em Navier-{S}tokes equations and nonlinear functional analysis},
  volume~66 of {\em CBMS-NSF Regional Conference Series in Applied
  Mathematics}.
\newblock Society for Industrial and Applied Mathematics (SIAM), Philadelphia,
  PA, second edition, 1995.

\bibitem{Srdjan20}
S.~Trifunovi\'c and Y.-G. Wang.
\newblock Existence of a weak solution to the fluid-structure interaction
  problem in 3{D}.
\newblock {\em J. Differential Equations}, 268(4):1495--1531, 2020.

\bibitem{MCG20}
S.~\v{C}ani\'{c}, M.~Gali\'{c}, and B.~Muha.
\newblock Analysis of a 3{D} nonlinear, moving boundary problem describing
  fluid-mesh-shell interaction.
\newblock {\em Trans. Amer. Math. Soc.}, 373(9):6621--6681, 2020.

\bibitem{CKMT24}
S.~\v{C}ani\'{c}, J.~Kuan, B.~Muha, and K.~Tawri.
\newblock {\em Deterministic and Stochastic Fluid-Structure Interaction}.
\newblock Advances in Mathematical Fluid Mechanics. Birkh\"{a}user/Springer,
  2024, conditionally accepted.

\bibitem{V12}
I.~Vel\v{c}i\'{c}.
\newblock Nonlinear weakly curved rod by {$\Gamma$}-convergence.
\newblock {\em J. Elasticity}, 108(2):125--150, 2012.

\end{thebibliography}
	
\end{document}